\newtheorem{thm}{Theorem}[section]
\newtheorem{cor}[thm]{Corollary}
\newtheorem{lem}[thm]{Lemma}
\newtheorem{prop}[thm]{Proposition}
\theoremstyle{definition}
\newtheorem{defn}[thm]{Definition}
\theoremstyle{definition}
\newtheorem{rem}[thm]{Remark}
\theoremstyle{definition}
\theoremstyle{definition}
 \newtheorem{note}[thm]{Notation}
 \newtheorem{conj}[thm]{Conjecture}
  \newcommand{\OO}{\mathcal{O}}
  \newcommand{\PP}{\mathbb{P}}
  \newcommand{\II}{\mathcal{I}}
  \newcommand{\NN}{\mathbb{N}}
  \newcommand{\ZZ}{\mathbb{Z}}
  \newcommand{\St}{\mathbb{S}}
  \newcommand{\X}{\widetilde{X}}
   \newcommand{\R}{\widetilde{R}}
  \newcommand{\T}{\widetilde{T}}
    \newcommand{\Ci}{\widehat{C_i}}
  \newcommand{\al}{\alpha}
   \newcommand{\be}{\beta}
 \newcommand{\ga}{\gamma}
 \newcommand{\de}{\delta}
 \newcommand{\rr}{\bar{r}}
  \newcommand{\qq}{\bar{q}}
\begin{document}

\title[Postulation  of lines and
one double line in $\PP^n$]
 {Postulation of generic lines and
 one double line in $\PP^n$  in view of  generic  lines and one multiple linear space}
\author{Tahereh Aladpoosh}
\address{ Tahereh Aladpoosh; School of Mathematics, Institute for Research in Fundamental
Sciences (IPM),  P.O.Box: 19395-5746, Tehran, Iran.}
  \email{tahere.alad@ipm.ir}
   \thanks{{\emph{Keywords}}: Good postulation; Specialization; Degeneration; Double line; Double point;
   Generic union of
   lines; Sundial;  Residual scheme; Hartshorne--Hirschowitz theorem; Castelnuove's inequality.}
    \subjclass[2010]{14N05, 14N20, 14C17, 14C20}

\begin{abstract}
A well-known theorem by Hartshorne--Hirschowitz  (\cite{HH})  states
that a generic union $\mathbb{X}\subset \PP^n$, $n\geq 3$, of lines
has good postulation with respect to the linear system
$|\OO_{\PP^n}(d)|$. So a question that arises naturally in studying
the postulation of non-reduced positive dimensional schemes
supported on linear spaces is the question whether adding a
$m$-multiple  linear space $m\PP^r$ to $\mathbb{X}$ can still
preserve it's good postulation, which means in classical language
that, whether $m\PP^r$ imposes independent conditions on the linear
system $|\II_{\mathbb{X}}(d)|$. Recently, the case of $r=0$, i.e.,
the case of lines and one  $m$-multiple point, has been completely
solved by several authors (\cite{CCG4}, \cite{AB}, \cite{B1})
starting with Carlini--Catalisano--Geramita, while the case of $r>0$
was remained unsolved, and this is what we wish to investigate in
this paper. Precisely, we study the postulation of a generic  union
of $s$ lines and one  $m$-multiple  linear space $m\PP^r$ in
$\PP^n$, $n\geq r+2$. Our main purpose is to provide a complete
answer to the question in the case of lines and one double line,
which says that the double line imposes independent conditions on
$|\II_{\mathbb{X}}(d)|$ except for the only case $\{n=4, s=2,
d=2\}$.
 Moreover,  we
discuss an approach to the general case of lines and one
$m$-multiple  linear space,  $(m\geq 2, r\geq 1)$,
 particularly, we find several exceptional such schemes,
and we conjecture that these are the only exceptional ones in this
family. Finally,
 we give some partial results in
 support of our conjecture.
\end{abstract}

\maketitle
 \tableofcontents
\section{Introduction}
 To understand the geometry of a closed subscheme $X$ as an embedded
scheme in $\PP^n$, one of the first
 points of interest is considering the postulation problem, i.e.
 determining the number of conditions imposed by asking
 hypersurfaces of any degree to contain $X$.
 In terms of sheaf cohomology, we would
like to know the
 rank of the restriction maps
 $$\rho(d):  H^0(\PP^n, \OO_{\PP^n}(d))\rightarrow H^0(X, \OO_{X}(d)). $$
 We say that $X$ has  {\emph{maximal rank}} or {\emph{good postulation}} or
 {\emph{expected postulation}} if $\rho(d)$
 has maximal rank, i.e. it is either injective or surjective, for each
 $d\geq 0$. This amounts to saying that one or other of the integers  $h^0(\II_X(d)),~ h^1(\II_X(d))$
 is zero, and this  shows that the property that $X$ imposes independent
 conditions to degree $d$ hypersurfaces can be interpreted
 cohomologically.

On the other hand, this classical problem is equivalent to computing
the Hilbert function of $X$.
 Let $HF(X,d)$ be the Hilbert function of $X$ in
degree $d$, namely,  $HF(X,d)= h^0(\OO_{\PP^n}(d))- h^0(\II_X(d))$,
i.e. the rank of $\rho(d)$.  In order to determine the Hilbert
function of $X$ in some degree $d$, there is an expected value for
it given by a naive count of conditions. This value is determined by
assuming that $X$ imposes independent conditions on the linear
system $|\OO_{\PP^n}(d)|$ of degree $d$ hypersurfaces in $\PP^n$,
i.e.,
$$h^0(\II_X(d))= \max
\left\{h^0(\OO_{\PP^n}(d))-h^0(\OO_X(d)),0\right\},$$
or equivalently
$$HF(X,d)= \min \{HP(X,d),~ HP(\PP^n,d)\},$$
where $HP(X,d)$ is the Hilbert polynomial of $X$.
  In
\cite{CCG1}, the authors called a scheme $X$ with such Hilbert
function for all $d\geq 0$, has {\emph{bipolynomial Hilbert
function}}.
 It always holds $HF(X,d)\leq \min \{HP(X,d),~
HP(\PP^n,d)\},$ so a natural question to ask is: when does this
inequality turn into equality?

 An important observation is that
 the postulation problem depends not only on the numerical data
involved in it, but also
 on the position of the components of $X$.
If $X$ is in sufficiently general position one expects that  $X$ has
bipolynomial Hilbert function, and therefore has good postulation,
but this naive guess is in general false.

When we restrict our attention to the special class of schemes
$X\subset \PP^n$ which supported  on  unions of generic linear
spaces,
 there is much interest in
the postulation problem. In this situation  it is noteworthy that
the notions of good postulation and bipolynomial Hilbert function
coincide.
 Original investigation mostly concentrated on  the reduced cases (see  e.g. \cite{Balm},
 \cite{Bal}, \cite{CCG1},
  \cite{CCG2}, \cite{D}, \cite{Sid}, \cite{HH}, \cite{GO}): if $\dim X=
 0$, i.e. $X$ is a generic collection of points in $\PP^n$, it is well known
 that $X$ has good postulation (see \cite{GMR}); if $\dim X=1$,  we
 have a brilliant result due to Hartshorne and  Hirschowitz,
 going back to $1982$ \cite{HH}, which states that a generic collection of
 lines in $\PP^n, n\geq 3$, has good postulation; as soon as we go up
 to $\dim X> 1$, the postulation problem becomes more and more
 complicated.
The extent of our ignorance in this situation is illustrated by the
fact that the complete answer to the postulation problem even in
two-dimensional case is not yet known (see
 \cite{Bal}, \cite{CCG1} for a generic union of lines and a few
 planes, and  \cite{Balm} for a generic union of lines and a linear space).

On the other hand there is also a lot of interest on the postulation
of non-reduced schemes supported on linear spaces : concerning the
zero-dimensional case, i.e. fat points schemes,
 the postulation problem is a field of  active research in algebraic
 geometry which has occupied researchers's minds
 for over a century, but, despite all the progress made on this problem,
 it is still very live and widely open in its generality
  (see e.g. \cite{AH}, \cite{CH}, \cite{HR}, and also
   \cite{cilib} for a survey of results,  related conjectures and open questions);
  concerning the
 positive dimensional case,
 the postulation problem turns out to be
 far too complicated and giving a complete answer appears to be ambitious
and quite difficult,
 this is why that it has never been systematically studied and in fact it had remained unsolved for a long
 time.
 Apparently the work of Carlini--Catalisano--Geramita \cite{CCG4} is a
 turning point in this story, which together with the recent papers
 \cite{AB} and \cite{B1} shows that
 a generic union $X$ of $s$ lines and one $m$-fat point in $\PP^n, n\geq
 3$,  always has good postulation in degree $d$ except for
 the cases $\{n=3, m= d, 2\leq s\leq d\}$,
 (see for the proof: \cite{CCG4} in the case of $n\geq 4$, \cite{AB} in the case of $\{n=3, m= 3\}$, and \cite{B1}
 in the case of $\{n=3, m\geq 4\}$). As far
as we know, this is the only complete knowledge of the postulation
in the case of non-reduced positive dimensional schemes supported on
linear spaces  (for other related results see  \cite{DHST},
\cite{FHL}).
 This paper was motivated by an attempt to go
further in this direction, namely one may ask about the generic
union of lines and one $m$-multiple line and one may hope that
behaves well with respect to the postulation problem modulo a
certain list of exceptions.
 The main result of this paper is solving the case of $m=2$, i.e.
 the case of
 a generic union of lines and one   double line.
More precisely, we prove the following theorem:
\begin{thm}\label{th1}  Let  $n,d\in\NN$, and $n\geq 3$. Let the scheme $X\subset
\PP^n$ be a generic union of $s\geq 1$ lines and one double line.
Then $X$ has good postulation, i.e.,
$$h^0(\II_X(d)) = \max \left \{ {d+n \choose n}- (nd+1)- s(d+1), 0 \right \},$$
except for the only case $\left\{n=4, s=2, d=2\right\}$.
\end{thm}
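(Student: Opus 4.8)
\emph{Proof strategy.} Since $\chi(\II_X(d))=\binom{d+n}{n}-(nd+1)-s(d+1)$ is fixed by the numerical data, good postulation is equivalent to $h^1(\II_X(d))=0$ whenever $\chi\geq 0$ and to $h^0(\II_X(d))=0$ whenever $\chi\leq 0$. The plan is to establish both vanishings simultaneously by a single specialization of $X$ onto a hyperplane $H\cong\PP^{n-1}$, proceeding by descending induction on $n$ (with base case $n=3$) and, inside each $n$, by induction on $d$. The two engines are the exact sequence of the Horace method,
\[
0\longrightarrow \II_{\mathrm{Res}_H X}(d-1)\longrightarrow \II_X(d)\longrightarrow \II_{X\cap H,\,H}(d)\longrightarrow 0,
\]
which gives $h^0(\II_X(d))\leq h^0(\II_{\mathrm{Res}_H X}(d-1))+h^0(\II_{X\cap H,H}(d))$, together with Castelnuovo's inequality $h^1(\II_X(d))\leq h^1(\II_{\mathrm{Res}_H X}(d-1))+h^1(\II_{X\cap H,H}(d))$. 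Since a generic $X$ specializes flatly to the degenerate configuration $X_0$ we build, semicontinuity reduces everything to controlling the cohomology of $X_0$, and hence (by the two displayed inequalities) to controlling the trace $X_0\cap H$ in $\PP^{n-1}$ and the residual $\mathrm{Res}_H X_0$ in $\PP^n$ at degree $d-1$.

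\emph{The inductive step.} I would specialize the support $L$ of the double line into $H$ and place (up to) all $s$ generic lines inside $H$ as well. A local computation shows that for a double line with $L\subset H$ one has $\mathrm{Res}_H(2L)=L$ (the reduced line) while $(2L)\cap H=2L$ is again a double line, now inside $H\cong\PP^{n-1}$; each line placed in $H$ contributes a line to the trace and nothing to the residual. Thus the trace becomes a generic union of lines and \emph{one double line in $\PP^{n-1}$} --- the very statement of the theorem one dimension down --- handled by the inductive hypothesis, while the residual is a generic union of lines in $\PP^n$, handled by Hartshorne--Hirschowitz \cite{HH}. That this split is numerically sharp follows from the two identities
\[
\binom{d+n-1}{n-1}+\binom{d+n-1}{n}=\binom{d+n}{n},\qquad \bigl((n-1)d+1\bigr)+d=nd+1,
\]
the first being Pascal's rule and the second recording that the double line distributes as $(\text{double line in }\PP^{n-1})+(\text{reduced line at degree }d-1)$. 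Whenever both the trace and the residual sit in the regime where their expected $h^0$ is positive, these identities force the Horace upper bound to equal $\max\{\chi(\II_X(d)),0\}$, and the (always valid) lower bound $h^0\geq\max\{\chi,0\}$ closes the case; in the opposite regime the same specialization is read through Castelnuovo's inequality to deduce $h^1(\II_X(d))=0$.

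\emph{The main obstacle.} The difficulty is concentrated in the \emph{unbalanced boundary cases}, where putting all $s$ lines into $H$ makes one of the trace/residual contributions fall into its surjective range; there the truncation $\max\{\,\cdot\,,0\}$ destroys additivity and the naive Horace bound overshoots the expected value. This is where I expect the real work to lie, and the remedy is the \emph{differential Horace method}: instead of degenerating whole lines into $H$, one specializes only a carefully chosen number of them and replaces the rest by suitable collision (horizontal/vertical) schemes, effectively interpolating between the two integer splits. Keeping track of these schemes forces one to carry a few extra points in the trace, so in practice the induction should be run for the slightly enlarged class ``lines, one double line, and a bounded number of points'' in $\PP^{n-1}$. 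The double line itself is the most delicate component: I would analyse its degenerations through the sundial construction, exploiting that a sundial is a flat limit of two skew lines and hence has cohomology controlled by \cite{HH}, in order to certify that the limit configuration is ``as general as possible'' and that no unexpected sections survive the specialization. Balancing the number of lines sent into $H$ against the collision points, and verifying that the differential method indeed restores sharpness, is the technical heart of the argument.

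\emph{Base cases and the exception.} The induction bottoms out at $n=3$ and at small $d$ (essentially $d\leq 2$), which I would treat directly: within $\PP^3$ the same Horace/Castelnuovo machinery applies with $H$ a plane, the trace being a configuration of lines and a planar double line together with points, whose postulation can be settled by hand with the aid of the lines-and-one-fat-point results \cite{CCG4},\cite{AB},\cite{B1} and the sundial degenerations. Finally, the single exceptional case $\{n=4,s=2,d=2\}$, where $\chi=\binom{6}{4}-9-6=0$ yet $h^0(\II_X(2))=1$, is not produced by the inductive scheme and must be verified by explicit computation: one exhibits the unexpected quadric containing two generic lines and one double line in $\PP^4$, confirming $h^1(\II_X(2))=h^0(\II_X(2))=1\neq 0$. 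Excluding this configuration, the induction goes through and yields the stated formula for all remaining $(n,s,d)$.
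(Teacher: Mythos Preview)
Your overall architecture is \emph{different} from the paper's and the difference is worth spelling out. You specialize the support $L$ of the double line into the hyperplane $H$, so that the trace carries a double line in $\PP^{n-1}$ and the residual carries only simple lines; this sets up an induction on $n$ with base $n=3$. The paper does the opposite: it keeps $2L$ generic outside $H$, so the trace carries only a double \emph{point} (plus lines and degenerate conics) and the residual still carries $2L$ together with fewer lines; this sets up an induction on $d$ within each fixed $n$. Both schemes balance numerically, and your local computation $\mathrm{Res}_H(2L)=L$, $Tr_H(2L)=2L|_H$ when $L\subset H$ is correct. What each buys: your route makes the residual trivial (Hartshorne--Hirschowitz alone) at the price of pushing all the difficulty into the base $n=3$; the paper's route keeps the double line in the problem throughout but lets the trace be handled by the known ``lines plus one double point'' result and the sundial theorem.

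There is, however, a genuine gap at the base and a misconception about sundials. First, the case $n=3$ is not something one can do ``by hand with $H$ a plane'': putting $2L$ into a plane $H\cong\PP^2$ forces the trace to be a planar double line together with \emph{concurrent} lines and points, and there is no clean inductive statement there to invoke. The paper's $\PP^3$ argument is in fact the most delicate part: it works not with a hyperplane but with a smooth quadric $Q\cong\PP^1\times\PP^1$, specializing lines (and in one residue class even the support of $2L$) into a ruling of $Q$, and it requires a case split according to $d\bmod 3$ together with the bidegree bookkeeping of Lemma~\ref{hh}. Your proposal contains no substitute for this. Second, your plan to ``analyse the double line through the sundial construction'' conflates two different schemes: a sundial is a flat limit of two \emph{skew simple} lines (Hilbert polynomial $2(d+1)$), whereas the double line $2L$ has Hilbert polynomial $nd+1$; sundials are used in the paper to degenerate pairs of the \emph{simple} $L_i$, never the double line. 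Finally, the appeal to ``differential Horace'' for the boundary cases is a placeholder rather than a mechanism: the paper never uses differential Horace here, but instead (i) reformulates the statement as $\St^*(n,d)$ by adding $q$ \emph{collinear} points so that the count is exact, and (ii) absorbs the integrality defects by degenerating carefully counted pairs of lines to sundials whose singular points lie on $H$ (or on $Q$), with all the necessary inequalities verified in the Appendix. If you want your induction-on-$n$ scheme to go through, you would still need a full, independent proof of the $n=3$ case (quadric methods or equivalent) and a precise strengthened hypothesis with points; the rest of your outline for $n\ge 4$ is then reasonable.
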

 Geometrically, the theorem says that one generic double line in
$\PP^n$ imposes independent conditions to  hypersurfaces of given
degree $d$ containing $s$ generic lines, with the exception of only
the case $\left\{n=4, s=2, d=2\right\}$.

 A first generalization is asking not only for $m=2$, but also
for $m>2$ arbitrary.
 Inspired  by the question involving an arbitrary multiple line instead of a double
 line, and based on several examples, we conjecture that
 a similar result should hold, in analogy with the
 statement of Theorem \ref{th1}.
 Namely we formulate the following conjecture:
 \begin{quote}
\emph{The scheme $X\subset \PP^n$ consisting of $s\geq 1$ generic
lines and one generic $m$-multiple line, $(m\geq 2)$,  always has
good postulation, except for the cases $\left\{n=4, m=d, 2\leq s\leq
d\right\}$.}
 \end{quote}

From this conjecture we can deduce  the important relation of the
 failure of $X$ to have good postulation in degree $d$,  with the
  multiplicity of a multiple line, the dimension
 of the ambient space, and the number of apparent simple lines.
 This seems to be fairly general behavior, which leads us to
 advance in  more general situation.
Indeed, one can push the problem we are facing even further, in the
sense of
 substituting a multiple line with a multiple linear
space of any dimension, and try to make a conjecture which parallels
the above one.

Based on a similar analogy and some further evidence, we
 propose a  conjecture as follows,
  which is significantly stronger than the former.
\begin{conj}\label{con} Let  $n,d,r\in\NN$, and $n\geq r+2\geq 3$. The scheme $X\subset
\PP^n$ consisting of $s\geq 1$ generic lines and one $m$-multiple
linear space of dimension $r$, $(m\geq 2)$,  always has good
postulation, except for the cases
$$\left\{n=r+3, m=d, 2\leq s\leq d\right\}.$$
\end{conj}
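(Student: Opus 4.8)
\textbf{Proof strategy for Conjecture \ref{con}.}
I would attack the statement by a simultaneous induction on the parameters $(r,m,d)$ (with secondary bookkeeping in $s$ and $n$), anchored at three already-settled fronts: the case $r=0$, where $X$ is a generic union of lines and one $m$-fat point and good postulation is known with exceptions $\{n=3,m=d,2\le s\le d\}$ (\cite{CCG4},\cite{AB},\cite{B1}); the case $m=1$, where $m\Lambda$ is reduced and $X$ is a generic union of lines and one linear space (\cite{Balm}); and small $d$, where after a single restriction $X$ degenerates to a pure configuration of lines governed by Hartshorne--Hirschowitz (\cite{HH}). As in Theorem \ref{th1}, the multiple linear space is the infinitesimal neighborhood $m\Lambda=V(\II_\Lambda^m)$ of a generic $\Lambda\cong\PP^r$. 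Since one always has $h^0(\II_X(d))\ge\max\{\binom{d+n}{n}-h^0(\OO_{m\Lambda}(d))-s(d+1),0\}$, it suffices to prove the reverse inequality, and by semicontinuity this may be checked on any flat, Hilbert--polynomial--preserving specialization of $X$.

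The engine is Castelnuovo's exact sequence
$$0\to\II_{\mathrm{Res}_H X}(d-1)\to\II_X(d)\to\II_{X\cap H,\,H}(d)\to 0$$
for a hyperplane $H\subset\PP^n$, used in two complementary ways. Choosing $H$ \emph{transverse} to $\Lambda$, a local computation gives that the residual of the multiple space is again $m\Lambda$ in degree $d-1$, while its trace is the $m$-multiple $(r-1)$-plane $m(\Lambda\cap H)$ in $H\cong\PP^{n-1}$; this descends $r$ through the trace and $d$ through the residual. Choosing instead $H\supseteq\Lambda$, the residual becomes $(m-1)\Lambda$ and the trace is $m\Lambda$ sitting in $\PP^{n-1}$, which descends $m$ and $n$. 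In either case I would sink $a$ of the $s$ lines into $H$ and keep $s-a$ generic: a sunk line has empty residual and contributes a full line to the trace, a generic line contributes itself to the residual and a single point to the trace. Thus each step reduces the problem to a trace problem (the same conjecture at lower $r$, $m$ or $n$, decorated with generic points) and a residual problem (the conjecture in degree $d-1$), both of which are covered by the inductive hypothesis.

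The crux is the choice of $a$: it must place \emph{both} trace and residual strictly inside the already-settled range, so that the two cohomology vanishings combine through the displayed sequence to the one sought for $X$. When the numerology leaves a fractional or leftover condition I would repair it with the differential Horace method, redistributing the residue of the multiple space as vertical length--one schemes, or equivalently by degenerating a pair of skew lines to a \emph{sundial} $\St$ --- their flat limit, two incident lines carrying an embedded point --- whose cohomology is again controlled by \cite{HH}. Absorbing the extra generic points in the trace (using that adding generic points preserves maximal rank) closes each inductive step, while a finite list of base configurations in $\PP^3$ and $\PP^4$ and in degree $d=1$ is checked by hand.

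The principal obstacle --- and the reason this remains a conjecture --- is controlling the \emph{proliferation of exceptional schemes}, i.e.\ proving that $\{n=r+3,m=d,2\le s\le d\}$ are the only failures. These are genuine: when $n=r+3$ the projection $\pi$ from $\Lambda$ maps $\PP^n$ to $\PP^2$, the forms of degree $d=m$ lying in $\II_\Lambda^m$ are exactly the cones $\pi^\ast g$ over plane curves $g$ of degree $d$, and $s\le d$ generic lines project to $s$ lines of $\PP^2$ carried by the reducible curve $\ell_1\cdots\ell_s\cdot h$, producing unexpected hypersurfaces through $X$ and forcing $h^0(\II_X(d))>0$. Crucially the transverse reduction preserves this locus exactly, since $n=r+3$ at level $(n,r)$ becomes $(n-1)=(r-1)+3$ at level $(n-1,r-1)$; hence the exceptional family is stable under the induction and collapses, for $r=1$, to the single case $\{n=4,s=2,d=2\}$ of Theorem \ref{th1}, and for $r=0$ to $\{n=3,m=d,2\le s\le d\}$. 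The hard part is to show that the balancing can always be arranged to avoid an exceptional trace outside this family, and that no new obstruction is introduced by the Horace degenerations --- a uniform argument of this kind, rather than the case-by-case partial results presently available, is exactly what the conjecture still lacks.
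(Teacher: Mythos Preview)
You should first note that Conjecture~\ref{con} is, as its name indicates, open: the paper does \emph{not} prove it. What the paper establishes toward it is only the case $d=m$ (the Proposition in \S\ref{evi}), where the degree-$m$ forms vanishing to order $m$ along $\Pi$ are cones with vertex containing $\Pi$, so the projection $\PP^n\dashrightarrow\PP^{n-r-1}$ from $\Pi$ identifies $H^0(\II_X(m))$ with $H^0(\II_{X'}(m))$ for $X'$ the image of the $s$ lines; this reduces to Hartshorne--Hirschowitz in $\PP^{n-r-1}$, and to a direct count in $\PP^2$ when $n=r+3$, exhibiting the defect ${s\choose 2}$. Your description of the exceptional locus via projection is exactly this argument. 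Beyond $d=m$ the paper offers only the Final Remark in \S\ref{scon} that the specialisation/degeneration method behind Theorem~\ref{th1} should in principle generalise, without saying how.

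Your proposal is therefore not a reproduction of a proof in the paper but a genuine proof \emph{plan}, and as such it is more structured than anything the paper provides. The two Castelnuovo moves you isolate --- $H$ transverse to $\Lambda$ (residual $m\Lambda$, trace $m(\Lambda\cap H)$) versus $H\supseteq\Lambda$ (residual $(m-1)\Lambda$, trace $m\Lambda$ in $\PP^{n-1}$) --- are both correct local computations, and together they set up a descent in $(r,m,n)$ anchored at the known fronts $r=0$, $m=1$, and Theorem~\ref{HH th}. This double-hyperplane scheme is not in the paper, which for the proved case $r=1$, $m=2$ always keeps $L$ generic relative to the hyperplane and descends in $d$ alone; your framework is a reasonable organising principle for the general problem.

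Where your plan remains a plan is precisely where you yourself locate the difficulty: the choice of the splitting integer $a$ and the repair of fractional leftovers. For the single instance $r=1$, $m=2$ the paper spends Sections~\ref{s3}--\ref{sn} and an entire Appendix of inequalities on this bookkeeping, with separate arguments for $\PP^3$ (quadric surface, three residues modulo $3$), $\PP^4$ (a second quadric step inside the trace), and $\PP^n$, $n\ge5$. Two specific cautions. First, the move $H\supseteq\Lambda$ requires $n\ge r+3$, so at the boundary $n=r+2$ only the transverse move is available and your induction must be rerouted there. Second, you conflate two distinct repair mechanisms: ``differential Horace'' in the Alexander--Hirschowitz sense is a vertical redistribution of zero-dimensional residues, whereas the device actually used in the paper for leftover conditions is the sundial degeneration of \emph{pairs of lines}, whose postulation is governed not by \cite{HH} but by Theorem~\ref{sundial} (\cite{CCG3}). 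Both are legitimate tools, but they are not interchangeable, and for a positive-dimensional multiple component it is not clear that either extends uniformly. None of this invalidates your outline; it simply confirms your own closing assessment that what the conjecture lacks is a uniform balancing argument, and that supplying one is the entire content of the problem.
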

 Of course, this conjecture coincides with the previous one for
  $r= 1$.
  As we shall see in \S \ref{scon}, there are
  results that make the conjecture rather plausible.
  It is worth mentioning  that the case $m=1$ is considered  in
  \cite{Balm}, where Ballico proved that a generic disjoint union of lines
  and one linear space always has good postulation \cite[Proposition
  1]{Balm}.

We want to finish by  pointing out that  a non-reduced scheme $X$
supported on  generic union of linear spaces always has exceptions,
a phenomenon that does not happen when $X$ is reduced (according to
a conjecture in \cite{CCG1}). In fact, the ``bad behavior'' of $X$
is always related to the multiple component of it.
\subsubsection*{The structure of the paper}
 Section \ref{bb} contains background material. To be more explicit,
 after recalling  basic definitions and notations on the schemes
of multiple linear spaces in \S \ref{not}, we then give, in \S
\ref{pre}, some lemmas and some elementary observations which are
extremely useful in dealing with the postulation problem. Next, in
\S \ref{res} we collect the known results concerning the postulation
of lines as well as of lines and one multiple point, that are
necessary for our proofs in Sections \ref{s3}--\ref{sn}, in
addition, we look at the Hilbert polynomial of multiple linear
spaces.  We will study the postulation of our schemes by a
degeneration approach, namely,
 degeneration of two skew
 lines in such a way that the resulting degenerated scheme would be a sundial, in the sense of \cite{CCG3},
  thanks to a theorem of  Carlini--Catalisano--Geramita on the postulation of sundials in any
  projective space; all this is
 represented in \S \ref{dege}.

 Section \ref{out} making up the core of the paper devoted to the
 outline of the proof of our main theorem, Theorem \ref{th1}.
 We begin this section with the exceptional case $\{n=4, s=2, d=2\}$ of the
 theorem.
  We make explicit,
 in \S \ref{exe}, the geometric reason that prevents a scheme
 consisting of one generic double line and two generic simple lines
 in $\PP^4$ from imposing independent conditions to quadric
 hypersurfaces. Moreover,  we solve completely the case of $d=2$ of
 the theorem.
 In \S \ref{reph} we explain a rephrasing of Theorem \ref{th1}, that
 is Theorem \ref{th2}. So our goal will be to prove Theorem
 \ref{th1} in the reformulation of Theorem \ref{th2}.
 \S \ref{st} describes in detail our strategy for proving  Theorem \ref{th2}, which is based on
 geometric
 constructions of specialized and degenerated schemes, the
 well-known Horace lemma, and the intersection theory on a
 hyperplane or on a smooth quadric surface.
 We would like to point out that our method of degenerations   owed to
 the works \cite{CCG3,CCG4}.

 In order to apply the strategy we
 will
 use an induction procedure which has difficult but delicate
 beginning steps for $n=3$ and $n=4$.
  In Sections \ref{s3} and \ref{s4} we prove Theorem \ref{th2} for,
  respectively, $n=3$ and $n=4$, setting the stage for our induction
  approach. While, the proof for the general case $n\geq5$ will be
  carried out in Section \ref{sn}.

Conjecture \ref{con}, which geometrically amounts to saying that one
generic $m$-multiple linear space $m\PP^r$ in $\PP^n$, $(n\geq
r+2\geq 3)$,  fails to impose independent conditions to  degree $d$
hypersurfaces through $s$ generic lines  if and only if $\{n=r+3,
m=d, 2\leq s\leq d\}$, is stated and discussed in Section
\ref{scon}, where we prove it for the exceptional cases, and we
describe completely what happen for $d=m$.

Finally, in Appendix, \S \ref{app}, we collect several numerical
lemmas needed for our proofs  in Sections \ref{s4} and \ref{sn}.
\section{Background}\label{bb}
 We  work throughout over an algebraically
closed field $k$ with characteristic zero.
\subsection{Notations}\label{not}
 Given a closed subscheme $X$
of $\PP^n$, $\mathcal{I}_X$ will denote the ideal sheaf of $X$.
 If  $X, Y$ are closed subschemes of $\PP^n$ and $X\subset Y$,
 then we denote by $\mathcal{I}_{X,Y}$ the
 ideal sheaf of $X$ in $\mathcal{O}_Y$.

If $\mathcal{F}$ is a coherent sheaf on the scheme $X$, for any
integer $i\geq 0$ we use $h^i(X,\mathcal{F})$ to denote the
$k$-vector space dimension of the cohomology group
$H^i(X,\mathcal{F})$.
 In particular,
when $X= \PP^n$, we will often omit $X$ and we will simply write
$h^i(\mathcal{F}).$

A \emph{(fat) point of multiplicity $m$}, or an \emph{$m$-multiple
point}, with support $P\in \PP^n$, denoted $mP$, is the
zero-dimensional subscheme of $\PP^n$ defined  by the ideal sheaf
$(\II_P)^m$, i.e. the $(m-1)^{th}$ infinitesimal neighbourhood of
$P$. In case $P\in X$ for any smooth variety $X\subset\PP^n$, we
will write $mP|_{X}$ for the $(m-1)^{th}$ infinitesimal neighborhood
of $P$ in $X$, that is the schematic intersection of the
$m$-multiple point $mP$ of $\PP^n$ and $X$ with $(\II_{P,X})^m$ as
its ideal sheaf.

Similarly, if $L\subset\PP^n$ is a line (resp. linear space), the
closed subscheme of $\PP^n$ supported on $L$ and defined by the
ideal sheaf $(\II_L)^m$ is called a\emph{ (fat) line of multiplicity
$m$} (resp. linear space), or an \emph{ $m$-multiple line} (resp.
linear space), and is denoted by $mL$.

Let $m_1,\ldots,m_s$ be positive integers and let  $X_1,\ldots,X_s$
be $s$ closed subschemes of $\PP^n$.
 We denote
by  $$m_1X_1+ \cdots +m_sX_s$$ the  schematic union of
$m_1X_1,\ldots,m_sX_s$, i.e. the subscheme of $\PP^n$  defined by
the ideal sheaf $(\II_{X_1})^{m_1}\cap \ldots \cap
(\II_{X_s})^{m_s}$.
\subsection{Preliminary lemmas}\label{pre}
The basic tool for the  study of the postulation problem is the so
called \emph{Castelnuovo's inequality}, that is an immediate
consequence of the well-known residual exact sequence (for more
details see e.g. \cite[Section 2]{AH}).

We first recall the notion of residual scheme (\cite[\S
9.2.8]{Ful}).
\begin{defn}
Let $X, Y$ be closed subschemes of $\PP^n$.
\begin{itemize}
\item [(i)]
The closed subscheme of $\PP^n$ defined by the ideal sheaf
$(\II_X:\II_{Y})$ is called the {\textbf{residual} } of $X$ with
respect to $Y$ and denoted by
 $Res_Y(X)$.
\item [(ii)]
The schematic intersection $X\cap Y$ defined by the ideal sheaf
 $(\II_X+\II_{Y})/\II_{Y}$ of $\OO_{Y}$ is called
 the {\textbf{trace} } of $X$ on $Y$ and denoted by  $Tr_Y(X)$.
\end{itemize}
\end{defn}
We note that the generally valid identity for ideal sheaves
$$(\II_{X_1}\cap\II_{X_2}:\II_{Y})=
(\II_{X_1}:\II_{Y})\cap(\II_{X_2}:\II_{Y})$$
 implies that the residual of the schematic union $X_1+X_2$ is the
 schematic union of the residuals.
\begin{lem}[Castelnuovo's Inequality]\label{cas}
 Let $d,e\in \NN$, and $d\geq e$. Let $H\subseteq{\PP^n}$
be a hypersurface of degree $e$, and let $X\subseteq{\PP^n}$ be a
closed subscheme. Then
$$h^0(\PP^n,\II_X(d))\leq
h^0(\PP^n,\II_{Res_H(X)}(d-e))+h^0(H,\II_{Tr_{H}(X)}(d)).$$
\end{lem}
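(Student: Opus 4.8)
The plan is to deduce the inequality from the \emph{residual exact sequence}, exactly as the surrounding text announces. First I would fix a defining equation $f$ of the degree-$e$ hypersurface $H$, viewed as a nonzerodivisor in $H^0(\OO_{\PP^n}(e))$, so that multiplication by $f$ gives the injection $\OO_{\PP^n}(-e)\xrightarrow{\cdot f}\OO_{\PP^n}$ with cokernel $\OO_H$. Restricting this map to the relevant ideal sheaves, the main step is to establish the short exact sequence
$$0\to \II_{Res_H(X)}(-e)\xrightarrow{\cdot f}\II_X\to \II_{Tr_H(X),H}\to 0,$$
in which the right-hand arrow is the composite $\II_X\hookrightarrow\OO_{\PP^n}\twoheadrightarrow\OO_H$. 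Once this is in hand, the conclusion is formal.

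The exactness is a local assertion, so I would work in a local ring $R=\OO_{\PP^n,p}$ where $H$ is cut out by the nonzerodivisor $f$, and set $I=\II_{X,p}$. By the definitions recalled above, the stalk of $\II_{Res_H(X)}$ is the colon ideal $(I:f)$, and the stalk of $\II_{Tr_H(X),H}$ is the image $(I+(f))/(f)\cong I/(I\cap(f))$. The three checks are then: (i) $\cdot f$ is injective since $f$ is a nonzerodivisor; (ii) its image is $f\cdot(I:f)=I\cap(f)$, which is precisely the kernel of the restriction $I\to R/(f)$; and (iii) the restriction $I\to I/(I\cap(f))$ is surjective onto the trace ideal by construction. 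This matching of the ideal-sheaf definitions of $Res_H(X)$ and $Tr_H(X)$ against the multiplication-by-$f$ map is the only delicate point; it is where the content sits, and everything downstream is routine.

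Finally I would twist the sequence by $\OO_{\PP^n}(d)$ and pass to cohomology. The resulting left-exact piece
$$0\to H^0(\II_{Res_H(X)}(d-e))\to H^0(\II_X(d))\to H^0(H,\II_{Tr_H(X),H}(d))$$
shows that $h^0(\PP^n,\II_X(d))$ equals $h^0(\II_{Res_H(X)}(d-e))$ plus the rank of the restriction map into the last group, and that rank is at most $h^0(H,\II_{Tr_H(X)}(d))$; here I use that $\II_{Tr_H(X),H}$ is supported on $H$, so its $\PP^n$- and $H$-cohomologies coincide. This yields the stated inequality. I expect no genuine obstacle: the hypothesis $d\geq e$ is not even needed for the argument and serves only to keep $d-e\geq 0$, the inequality holding for all $d$ (cf. \cite[Section 2]{AH}).
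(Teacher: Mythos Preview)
Your proof is correct and follows precisely the route the paper itself indicates: the lemma is stated without proof, with the surrounding text noting that it ``is an immediate consequence of the well-known residual exact sequence (for more details see e.g.\ \cite[Section 2]{AH}).'' Your derivation of that sequence via the local identification $f\cdot(I:f)=I\cap(f)$ and the subsequent left-exactness argument is exactly the standard one the paper is invoking, so there is nothing to compare.
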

This lemma, especially after the outstanding work of Hirschowitz
\cite{Hir},  is the basis for a standard method of working
inductively with degree to solve the postulation problem and
particularly is central to our proofs in the present paper (Sections
\ref{s3}--\ref{sn}).

The following remark is quite immediate.
\begin{rem}\label{remark}
 Let $n,d,s,s'\in \NN,$  $s'<s.$
  Let $W_s= X_1+\cdots+X_s\subset \PP^n$ be the schematic union of
 non-intersecting closed subschemes $X_i.$
\begin{itemize}
\item[(i)]
 If $h^1(\II_{W_s}(d))= 0,$ then $h^1(\II_{W_{s'}}(d))= 0.$
\item[(ii)]
 If $h^0(\II_{W_{s'}}(d))= 0$, then $h^0(\II_{W_s}(d))= 0.$
 \end{itemize}
\end{rem}
The following  lemma shows that how to add a collection of  points
lying on a linear space $\Pi\subset\PP^n$ to a scheme
$X\subset\PP^n$, in such a way that imposes independent conditions
on the linear system of degree $d$ hypersurfaces passing through $X$
for a given degree d \cite[Lemma 2.2]{CCG1}.
\begin{lem} \label{line}
 Let $d \in \Bbb N$. Let $X  \subseteq \Bbb P^n$ be  a closed subscheme,
 and let $P_1,\dots,P_s$ be generic points on a linear space $\Pi\subset\PP^n$.

 If $h^0(\II_X(d))
=s$ and $h^0(\II_{X +\Pi}(d)) =0$, then
$h^0(\II_{X+P_1+\cdots+P_s}(d)) = 0.$
\end{lem}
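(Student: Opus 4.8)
The plan is to reduce the statement to the elementary fact that $s$ generic points of a variety impose independent conditions on any $s$-dimensional space of sections, the whole point being that the hypothesis $h^0(\II_{X+\Pi}(d))=0$ lets us transport the problem from $\PP^n$ onto $\Pi$ itself. First I would consider the restriction-to-$\Pi$ map
$$\rho\colon H^0(\PP^n,\II_X(d))\longrightarrow H^0(\Pi,\OO_\Pi(d)),\qquad F\longmapsto F|_\Pi,$$
whose kernel consists of those degree $d$ forms vanishing on both $X$ and $\Pi$, that is, $\ker\rho=H^0(\PP^n,\II_{X+\Pi}(d))$ (here $\Pi$ carries its reduced structure, so $\II_\Pi$ enters to the first power). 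By hypothesis this group is zero, so $\rho$ is injective and its image $W:=\rho\bigl(H^0(\PP^n,\II_X(d))\bigr)$ is a linear subspace of $H^0(\Pi,\OO_\Pi(d))$ of dimension exactly $s$.

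Next I would observe that, since each $P_i$ lies on $\Pi$, a form $F\in H^0(\PP^n,\II_X(d))$ vanishes at $P_i$ if and only if its restriction $\rho(F)$ vanishes at $P_i$. Using injectivity of $\rho$, this identifies
$$H^0\bigl(\PP^n,\II_{X+P_1+\cdots+P_s}(d)\bigr)\;\cong\;\{\,g\in W:\ g(P_1)=\cdots=g(P_s)=0\,\},$$
so the claim becomes the assertion that the $s$ generic points $P_1,\dots,P_s$ of $\Pi$ impose $s$ independent conditions on the $s$-dimensional space $W$, forcing the right-hand side to vanish. I would prove this by induction on $k$, showing that $\dim\{\,g\in W:\ g(P_1)=\cdots=g(P_k)=0\,\}=s-k$ for generic $P_1,\dots,P_k$. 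The inductive step rests on the fact that a nonzero section of $\OO_\Pi(d)$ vanishes only on a proper closed subset of $\Pi$: if the space $W_k$ cut out by the first $k$ points is still nonzero, then choosing any $0\neq g\in W_k$, a generic new point $P_{k+1}$ avoids the hypersurface $\{g=0\}$, so evaluation at $P_{k+1}$ is a nonzero functional on $W_k$ and drops its dimension by one. Iterating down to $k=s$ yields $W_s=0$, which is what we want.

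The argument is short, so the only delicate point is the bookkeeping of genericity: at each stage one must guarantee that $W_k$ remains nonzero (which is automatic while $k<s$ by the inductive dimension count) before invoking a fresh generic point, and one must keep the already-chosen points generic enough that the dimension count from the previous stage survives. I expect no serious obstacle beyond making these two genericity conditions compatible, and the inductive formulation handles this cleanly since at each step the failure locus for the new point is a fixed proper closed subset.
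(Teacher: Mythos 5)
Your argument is correct, and it is essentially the standard proof of this lemma: the paper itself does not prove it but imports it as \cite[Lemma 2.2]{CCG1}, where the same idea is used — the hypothesis $h^0(\II_{X+\Pi}(d))=0$ guarantees that no nonzero section of $\II_X(d)$ vanishes on all of $\Pi$, so each successive generic point of $\Pi$ drops $h^0$ by exactly one. Your reformulation via the injective restriction map to $\Pi$ and the inductive genericity bookkeeping are both sound, so there is nothing to fix.
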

\subsection{What results were previously known}\label{res}
As a key question in the direction of studying  the postulation
problem of a scheme  $X\subset \PP^n$ supported on unions of generic
linear spaces, one can ask: \emph{What is the Hilbert polynomial of
$X$?}  When $X$ is reduced, Derksen answered this question by giving
 a  formula for computing the
Hilbert polynomial of $X$ (see \cite{D} for details). Moreover, the
Hilbert polynomial of a multiple linear space is well-known, and it
is not difficult to verify it by a count of parameters, that can be
found in e.g. \cite[\S 2]{B3} and \cite[Lemma 2.1]{DHST}.
\begin{lem}\label{HP}
Let $n,d,r\in \NN$, $r<n$ and $1\leq m\leq d$. Let $\Pi\subset\PP^n$
be a linear space of dimension $r$, then
\begin{equation}\label{00}
HP(m\Pi,d)=\sum_{i=0}^{m-1}{r+d-i \choose r}{n+i-r-1 \choose i}.
\end{equation}
\end{lem}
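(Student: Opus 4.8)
The plan is to compute the Hilbert polynomial of $m\Pi$ by filtering its structure sheaf by powers of $\II_\Pi$ and handling each graded piece separately. The point is that the Hilbert polynomial equals the Euler characteristic $\chi(\OO_{m\Pi}(d))$ identically as a polynomial in $d$, and Euler characteristic is additive on short exact sequences; so it suffices to understand the successive quotients of the $\II_\Pi$-adic filtration on $\OO_{m\Pi}$.

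First I would choose homogeneous coordinates $x_0,\dots,x_n$ on $\PP^n$ so that $\Pi=\{x_{r+1}=\cdots=x_n=0\}$; then $\II_\Pi$ is generated by the $n-r$ linear forms $x_{r+1},\dots,x_n$, and $\OO_{m\Pi}=\OO_{\PP^n}/(\II_\Pi)^m$. The chain
$$0=(\II_\Pi)^m/(\II_\Pi)^m \subset (\II_\Pi)^{m-1}/(\II_\Pi)^m \subset \cdots \subset \OO_{\PP^n}/(\II_\Pi)^m = \OO_{m\Pi}$$
has successive quotients $(\II_\Pi)^i/(\II_\Pi)^{i+1}$ for $i=0,\dots,m-1$, each a sheaf supported on $\Pi$.

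The key step is to identify these graded pieces. Because $\Pi$ is a linear space, the forms $x_{r+1},\dots,x_n$ form a regular sequence, so the associated graded of the $\II_\Pi$-adic filtration is the symmetric algebra on the conormal sheaf $\II_\Pi/(\II_\Pi)^2\cong\OO_\Pi(-1)^{\oplus(n-r)}$; concretely, $(\II_\Pi)^i/(\II_\Pi)^{i+1}$ is a free $\OO_\Pi$-module with basis the degree-$i$ monomials in $x_{r+1},\dots,x_n$. Since each such monomial has degree $i$, it contributes a copy of $\OO_\Pi(-i)$, and the number of degree-$i$ monomials in $n-r$ variables is ${n+i-r-1 \choose i}$, so
$$(\II_\Pi)^i/(\II_\Pi)^{i+1} \cong \OO_\Pi(-i)^{\oplus {n+i-r-1 \choose i}}.$$

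It then remains to assemble the answer. Identifying $\Pi$ with $\PP^r$ and twisting by $d$, each summand becomes $\OO_{\PP^r}(d-i)$ with $\chi(\OO_{\PP^r}(d-i))={r+d-i \choose r}$; under the hypothesis $m\leq d$ one has $d-i\geq d-m+1\geq 1$, so all higher cohomology vanishes and this Euler characteristic is genuinely $h^0$. Summing over $i$ and using additivity of $\chi$ gives
$$HP(m\Pi,d)=\chi(\OO_{m\Pi}(d))=\sum_{i=0}^{m-1}{r+d-i \choose r}{n+i-r-1 \choose i},$$
which is the asserted formula. The only genuine obstacle is the identification in the key step: one must check that the associated graded of the $\II_\Pi$-adic filtration splits as the stated direct sum of line bundles, which relies precisely on $\Pi$ being regularly — indeed linearly — embedded. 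Everything else is bookkeeping with binomial coefficients and the standard cohomology of line bundles on $\PP^r$.
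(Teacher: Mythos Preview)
Your proof is correct. The paper does not actually prove this lemma: it merely cites \cite[\S 2]{B3} and \cite[Lemma 2.1]{DHST} and remarks that the formula follows from a ``count of parameters'' --- namely, a degree-$d$ form has multiplicity $m$ along $\Pi$ iff, expanding in the transverse variables $x_{r+1},\dots,x_n$, every coefficient of a monomial of total transverse degree $i<m$ vanishes; there are ${n+i-r-1\choose i}$ such monomials for each $i$, and each coefficient lives in the ${r+d-i\choose r}$-dimensional space of degree $d-i$ forms on $\Pi$. Your filtration argument is the sheaf-theoretic incarnation of exactly this decomposition: the graded piece $(\II_\Pi)^i/(\II_\Pi)^{i+1}\cong\OO_\Pi(-i)^{\oplus{n+i-r-1\choose i}}$ records precisely those transverse-degree-$i$ coefficients. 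So the two approaches are the same idea in different clothing; yours has the advantage of making the answer manifestly a Hilbert polynomial via additivity of $\chi$, while the parameter count is more elementary but must separately argue that the conditions are independent.
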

Indeed, the requirement for a degree $d$ hypersurface in $\PP^n$ to
contain $m\Pi$, i.e. to have multiplicity $m$ along the linear space
$\Pi$, imposes the number of conditions on it, which  is at most the
right hand side of (\ref{00}).

In our case, i.e. the case of double line, one knows that for a
hypersurface to contain a double line $2L$ is equivalent to saying
that it is singular along the line $L$, and   Lemma above asserts
that $2L$ in $\PP^n$  imposes $(nd+1)$ independent conditions to
degree $d$ hypersurfaces.

 Now we recall a few  results on the postulation of
schemes supported on generic linear spaces which we will use to
prove our Theorem \ref{th1} in \S\S \ref{s3}--\ref{sn}. We start
with a spectacular result due to Hartshorne and Hirschowitz, about
the generic lines.
 \begin{thm}[Hartshorne--Hirschowitz]\cite[Theorem 0.1]{HH}\label{HH th}
Let  $n,d\in \NN$, and $n\geq 3$.  Let $X\subset \PP^n$ be a generic
union of $s$ lines. Then $X$ has good postulation, i.e.,
$$h^0(\II_X(d)) = \max \left \{ {d+n \choose n} -s(d+1), 0 \right \}.$$
\end{thm}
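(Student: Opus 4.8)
The plan is to reduce the statement to a maximal-rank assertion at two critical values of $s$, and then to prove those by a double induction on $n$ and $d$ driven by Castelnuovo's inequality (Lemma \ref{cas}). First I would note that, for a disjoint union $X$ of $s$ lines, the sheaf exact sequence $0\to\II_X\to\OO_{\PP^n}\to\OO_X\to 0$ gives $h^i(\II_X(d))=0$ for $i\geq 2$ (since $\OO_X(d)$ restricted to each $\PP^1$ has vanishing higher cohomology and $h^i(\OO_{\PP^n}(d))=0$ for $0<i<n$), so that
$$h^0(\II_X(d))-h^1(\II_X(d))=\chi(\II_X(d))=\binom{d+n}{n}-s(d+1).$$
Good postulation in degree $d$ means one of $h^0,h^1$ vanishes. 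Writing $\sigma=\binom{d+n}{n}/(d+1)$, it therefore suffices to prove two critical cases: $h^1(\II_X(d))=0$ for $s=\lfloor\sigma\rfloor$ and $h^0(\II_X(d))=0$ for $s=\lceil\sigma\rceil$. By Remark \ref{remark}(i) the first vanishing descends to all smaller $s$, and by Remark \ref{remark}(ii) the second ascends to all larger $s$, so these two cases cover every $s$.

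Both critical cases follow from an upper bound $h^0(\II_X(d))\leq\max\{\chi(\II_X(d)),0\}$, since the reverse inequality is automatic; and by semicontinuity it is enough to exhibit \emph{one} degeneration of the generic configuration attaining this bound. The induction step is then the classical Horace construction: fix a hyperplane $H\cong\PP^{n-1}$ and specialize so that $k$ of the lines come to lie in $H$ while the remaining $s-k$ meet $H$ transversally at general points. For such a scheme the residual $Res_H(X)$ is the union of the $s-k$ off-lines in $\PP^n$, and the trace $Tr_H(X)$ is the union of the $k$ lines inside $H$ together with the $s-k$ points cut on $H$. Castelnuovo's inequality gives
$$h^0(\II_X(d))\leq h^0\!\big(\II_{Res_H(X)}(d-1)\big)+h^0\!\big(H,\II_{Tr_H(X)}(d)\big),$$
where the first term is an $(s-k)$-line problem in $\PP^n$ in degree $d-1$, controlled by the induction on $d$, and the second is a $k$-line-plus-points problem in $\PP^{n-1}$ in degree $d$, controlled by the induction on $n$.

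To make the two terms sum to exactly $\max\{\chi,0\}$, both subproblems must themselves sit at (or below) their own good-postulation thresholds, and the $s-k$ stray points in the trace must impose independent conditions. The crucial device here is Lemma \ref{line}: once the $k$ lines in $H$ have the expected postulation in degree $d$, the extra points may be added so as to impose independent conditions, provided the numerics leave precisely the right slack. The hard part will be choosing $k$ so that both induction hypotheses apply simultaneously and the counts $\binom{d+n}{n}$, $(d+1)$, $k(d+1)$ align exactly at the boundary: the naive split nearly works but typically leaves a small discrepancy, which must be absorbed either by a finer distribution of the lines between $H$ and its complement or, when that fails, by collapsing part of the configuration and iterating the Horace reduction. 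The whole argument is then anchored by a finite list of base cases — small $d$ (say $d\leq 2$) and small $n$ — verified directly, after which the displayed inequality closes the induction and yields good postulation for all $s$, all $d$, and all $n\geq 3$.
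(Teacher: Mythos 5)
This statement is not proved in the paper at all: it is the Hartshorne--Hirschowitz theorem, quoted verbatim from \cite{HH} as known input, so your proposal can only be measured against the original argument it cites. Your outer shell is right --- the reduction to the two critical values of $s$ via $\chi(\II_X(d))=\binom{d+n}{n}-s(d+1)$ and Remark \ref{remark}, semicontinuity, and a Horace induction on $(n,d)$ through Castelnuovo's inequality is exactly the framework of \cite{HH}. But the proposal omits the two ingredients without which that induction does not close, and it waves at the resulting obstruction ("a small discrepancy which must be absorbed") without supplying the mechanism.

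First, the base case $n=3$ cannot be run with a hyperplane: two distinct lines in $H\cong\PP^2$ always meet, so specializing $k\geq 2$ lines into $H$ destroys genericity of the trace and the induction on $n$ has nothing to bottom out on. Hartshorne and Hirschowitz instead specialize lines onto a smooth quadric surface $Q\subset\PP^3$, using the two rulings and the bidegree bookkeeping of Lemma \ref{hh}; the present paper reproduces exactly this difficulty (see \S\ref{st}, where the author notes that the hyperplane argument "does not cover the case $n=4$, where forced intersection of lines appear in $\PP^{n-2}=\PP^2$", and the entire Section \ref{s3}). Second, the arithmetic discrepancy you acknowledge is not absorbable by "a finer distribution of the lines": since a line in $H$ eats $d+1$ conditions from the trace and a transversal line eats $1$, while the residual budget moves in units of $d$, the counts generically cannot be balanced with simple lines alone. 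The essential new idea of \cite{HH} is to degenerate a pair of skew lines to a degenerate conic with an embedded point (the sundial of Definition \ref{sundial}): its trace on $H$ is a degenerate conic plus or minus an embedded/residual point, which provides the one-condition unit of slack needed to make both subproblems land exactly on their thresholds. This degeneration, together with the flatness statement justifying semicontinuity for it, is the heart of the proof and is absent from your sketch. A minor further point: Lemma \ref{line} as stated requires the added points to lie on a linear space $\Pi$ with $h^0(\II_{X+\Pi}(d))=0$, whereas your $s-k$ trace points are general points of $H$, so that lemma does not apply as invoked (a direct genericity argument is needed instead).
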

As a first step  for positive dimensional non-reduced cases, in
\cite{CCG4}, \cite{AB}, and \cite{B1} the authors  examined the
postulation problem for a generic collection of skew lines and one
fat point in $\PP^n$, and they found out that when $n\geq 4$ these
schemes have good postulation, but when $n= 3$
 there are several defective such schemes.
Now,  one can present these results  simultaneously in a theorem as
follows.
\begin{thm}
Let  $n,m,d\in \NN$, and $n\geq 3$. Let the scheme $X\subset \PP^n$
be a generic union of $s\geq 1$  lines and one fat point of
multiplicity $m\geq 2$. Then $X$ has good postulation, i.e.,
$$h^0(\II_X(d))= \max \left\{{d+n\choose n}- {m+n-1\choose n}-
s(d+1), 0\right\},$$ except for the cases $\{n=3, m=d, 2\leq s\leq
d\}.$
\end{thm}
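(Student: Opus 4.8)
The plan is to reduce the theorem to the single inequality
$$h^0(\II_X(d)) \le E, \qquad E:=\max\left\{\binom{d+n}{n}-\binom{m+n-1}{n}-s(d+1),\,0\right\},$$
since the opposite inequality $h^0(\II_X(d))\ge E$ is automatic: by Lemma \ref{HP} the fat point imposes at most $\binom{m+n-1}{n}$ conditions and each line at most $d+1$, so $X$ imposes at most $\binom{m+n-1}{n}+s(d+1)$ conditions in degree $d$. Because $h^0$ is upper semicontinuous under specialization, it then suffices to produce one flat degeneration $X_0$ of $X$ with $h^0(\II_{X_0}(d))\le E$. Thus the entire argument is driven by choosing a good specialization, and the bookkeeping of $s$ is streamlined at the outset by Remark \ref{remark}, which lets one pass freely between fewer and more lines once the critical count is settled.

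I would run an induction on the degree $d$ (with the multiplicity $m$ decreasing in step) anchored by a nested induction on the dimension $n$, using Castelnuovo's inequality (Lemma \ref{cas}) relative to a generic hyperplane $H\cong\PP^{n-1}$ as the engine. The Horace-type specialization places the support $P$ of the fat point on $H$ and degenerates $a$ of the lines into $H$ while keeping the remaining $s-a$ lines transversal to it. For a line inside $H$ the trace is the whole line and the residual is empty; for a transversal line the trace is a single point of $H$ and the residual is the line itself; and for the fat point on $H$ the trace is the fat point $mP|_H$ of $\PP^{n-1}$ while the residual is the $(m-1)$-fold point $(m-1)P$ in $\PP^n$. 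Castelnuovo's inequality then bounds $h^0(\II_{X_0}(d))$ by $h^0$ of the residual scheme in degree $d-1$ — again a union of lines and one fat point, now of multiplicity $m-1$, to which the inductive hypothesis in lower degree applies — plus $h^0$ of the trace scheme on $H$ in degree $d$, to which the inductive hypothesis in lower dimension applies. The purely line-theoretic pieces are controlled by the Hartshorne--Hirschowitz theorem (Theorem \ref{HH th}), which also serves as the base case $m=1$.

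The heart of the matter is the numerical balancing of the two terms. A short Pascal-identity computation shows that the total number of imposed conditions is conserved by the split for \emph{every} admissible $a$, so the difficulty is not in matching totals but in choosing $a$ so that \emph{both} pieces individually achieve maximal rank and neither becomes over-determined (which would make the Castelnuovo bound strictly exceed $E$). This forces $a$ into a prescribed range governed by the size of $s(d+1)$ relative to the available forms together with the discrete jump from $\binom{m+n-1}{n}$ to $\binom{m+n-2}{n}$. When no integer $a$ in $[0,s]$ balances the split, the plain Horace lemma is too coarse and one passes to a Horace differential argument, or uses Lemma \ref{line} to graft onto $H$ the generic simple points arising as traces of the transversal lines, thereby absorbing the discrepancy of one unit at a time. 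These auxiliary points are precisely the reason one strengthens the inductive statement to include a controlled collection of generic simple points on a hyperplane.

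I expect the main obstacle to be the case $n=3$, which is exactly where the genuine exceptions $\{m=d,\ 2\le s\le d\}$ live. In $\PP^3$ a hyperplane is a plane $\PP^2$ in which the degenerated lines can meet, the trace fat point $mP|_H$ sits in $\PP^2$, and the dimension count is tightest, so the clean transversality exploited for $n\ge 4$ degrades and the Castelnuovo reduction no longer separates the contributions cleanly; one is forced to restart the descent with $\PP^2$, or with a smooth quadric surface, as the ambient variety and to use plane intersection theory directly. This is why the literature dispatches $n\ge 4$ in a single pass (\cite{CCG4}) but needs the separate analyses of \cite{AB} for $m=3$ and \cite{B1} for $m\ge 4$ in $\PP^3$: there one must exhibit the surviving forms in the exceptional cases explicitly, and then verify degree by degree — tracking the residual exact sequence until $d$ exceeds $m$ — that maximal rank is restored outside the listed exceptions.
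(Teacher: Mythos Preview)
The paper does not supply its own proof of this theorem: it is quoted as a known result, attributing the case $n\ge 4$ to \cite{CCG4} and the case $n=3$ to \cite{AB} (for $m=3$) and \cite{B1} (for $m\ge 4$). So there is no in-paper argument to compare against; the relevant comparison is with those references.

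Your outline is consistent with the Horace-method strategy those papers employ, and the key structural claims are correct: the residual/trace computation for $mP$ on a hyperplane through $P$ is $(m-1)P$ and $mP|_H$ respectively, the Pascal identity does make the condition count additive across the split for every $a$, and the need to carry auxiliary generic points through the induction (to absorb the traces of the transversal lines) is exactly why the inductive statement must be strengthened. You are also right that $n=3$ is where the method breaks down and requires a separate argument via a smooth quadric in place of a plane---which is precisely the division of labor among \cite{CCG4}, \cite{AB}, \cite{B1}.

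What you have is an accurate roadmap rather than a proof. The substantive content of the cited papers lies in the parts you defer: pinning down the admissible range for $a$ and showing it is nonempty in the non-exceptional cases, handling the base cases of the double induction, carrying out the quadric-based analysis for $n=3$, and exhibiting the surplus sections in the exceptional cases $\{n=3,\ m=d,\ 2\le s\le d\}$. None of this is done here, and in particular the phrase ``one passes to a Horace differential argument'' hides real work. As a plan it is sound; as a proof it is incomplete in exactly the places where the literature had to work hardest.
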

 Since we will apply the theorem for the case $m=2$ and $d\geq 3$ several times in the next sections,
 it is convenient to restate it as follows.
 \begin{cor} \label{CCG}
Let  $n,d\in \NN$, and  $n,d\geq3$. Let the scheme $X\subset \PP^n$
be a generic union of $s\geq 1$  lines and one double point. Then
$X$ has good postulation in degree $d$, i.e.
$$
h^0(\II_X(d)) = \max \left \{ {d+n \choose n} -  (n+1) -s(d+1), 0
\right \}.
$$
\end{cor}
\subsection{A degeneration approach}\label{dege}
A natural approach to the postulation problem is to argue by
degeneration.
 In view of  the fact that we have the semicontinuity
theorem for cohomology
 groups in a flat family, one may use the degenerations and the
 semicontinuity theorem in order to be able to better handle
 the
postulation of generic configuration of linear spaces. Specifically,
if one can prove that the property of having good postulation is
satisfied in the special fiber, i.e. the degenerate scheme, then one
may hope to  obtain the same property in the general fiber, i.e. the
original scheme.

In the celebrated paper \cite{HH} Hartshorne and Hirschowitz
investigated a new degeneration technique to attack the postulation
problem for a generic union of lines.  In fact,  they degenerate two
skew lines in $\PP^3$ in such a way that the resulting scheme
becomes  a ``degenerate conic with an embedded point" (which also
was used in \cite{Hir}).
 Even more generally, one can push this trick of ``adding nilpotents" further,
  to give a degeneration of two skew lines in higher dimensional
  projective space $\PP^n,$ $n\geq 3$,   this is what
 the authors introduced in \cite[Definition 2.7]{CCG1} and called a
  {\em (3-dimensional) sundial}.

 According to the terminology of \cite{HH}, we say that $C$ is a
{\it degenerate conic} if  $C$ is the union of two intersecting
lines $L$ and $M$, so $C=L+M$.
\begin {defn}\label{sundial}
Let $L$ and $M$ be two intersecting lines in $\mathbb P^n, ~n \geq
3$, and let
 $T\cong \Bbb P^{3}$ be a generic linear space
containing the degenerate conic $L+M$. Let
 $P$ be the singular point of  $L+M$,
i.e. $P = L \cap M$.   We call the scheme $L+M+ 2P|_T$ a {\it
degenerate conic with an embedded point} or a {\it (3-dimensional)
sundial}.
\end {defn}
 One can show a sundial is a flat limit inside $\PP^n$ of a flat
 family whose general fiber is the disjoint union of two lines, i.e.
 a sundial is a degeneration of two generic lines in $\PP^n$, $n\geq 3$. This is the
 content of the following lemma (see \cite[Example 2.1.1]{HH} for the case $n=3$, and
  \cite[Lemma 2.5]{CCG1} for the general case).
\begin{lem}
Let $X_1\subset\PP^n$, $n\geq 3$, be the disjoint union of two lines
$L_1$ and $M$. Then there exists a flat family of subschemes
$X_i\subset \langle X_1\rangle\cong \PP^3$, $(i\in k)$,  whose
general fiber is
 the union of two skew lines and whose special fiber is the sundial $X_0= M+L+2P|_{\langle X_1\rangle}$,
 where $L$ is a line and $M\cap L= P$.
\end{lem}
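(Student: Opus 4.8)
The plan is to reduce the statement at once to the case $n=3$ and then to produce the family by an explicit one-parameter degeneration of ideals. Since the two skew lines $L_1$ and $M$ span the linear space $T=\langle X_1\rangle\cong\PP^3$, and since both the limiting line $L$ and the embedded point $2P|_T$ of the sundial are contained in $T$, the whole construction can be carried out inside $T$. A closed immersion $T\hookrightarrow\PP^n$ preserves flatness over the base and changes neither the fibres nor their Hilbert polynomials, because $\OO_{\PP^n}(1)|_T=\OO_T(1)$; hence a flat family inside $T$ with the required fibres is automatically such a family inside $\PP^n$. So it suffices to build the degeneration in $\PP^3$.

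Next I would write the family down. Choose homogeneous coordinates on $T\cong\PP^3$ so that $M=V(x_0,x_1)$, and for $t\in k$ set $L_t=V(x_3,\,x_0-tx_2)$. For $t\neq 0$ the lines $M$ and $L_t$ are disjoint, so (the two lines being linear complete intersections, the relevant $\mathrm{Tor}$ vanishes) their schematic union $I_t=(x_0,x_1)\cap(x_3,\,x_0-tx_2)$ equals the product of the two ideals, namely
$$I_t=(x_0x_3,\ x_0^2-tx_0x_2,\ x_1x_3,\ x_0x_1-tx_1x_2),$$
a pair of skew lines with Hilbert polynomial $2d+2$. For $t=0$ the line $L_0=V(x_0,x_3)$ meets $M$ at the single point $P=[0:0:1:0]$, and specializing the generators gives $J_0=(x_0^2,\ x_0x_1,\ x_0x_3,\ x_1x_3)$. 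The ideal generated by the four displayed generators over $k[x_0,\dots,x_3][t]$ defines a subscheme $\mathcal X\subset\PP^3\times\mathbb A^1_t$, and I would verify that $\mathcal X\to\mathbb A^1_t$ is flat by checking that the fibrewise Hilbert polynomial is constantly $2d+2$; since the base is an integral smooth curve, constancy of the Hilbert polynomial is equivalent to flatness, and the flat limit is then the fibre cut out by $J_0$.

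It remains to recognise $J_0$ as the sundial. Its support is $V(x_0,x_1x_3)=M\cup L_0$, the degenerate conic, which spans the plane $V(x_0)\subset T$. Localizing at the generic point of each of $M$ and $L_0$ recovers the two reduced lines, while localizing at $P$ gives $(J_0)_P=\bigl((x_0,x_1x_3)\cap\mathfrak m_P^{2}\bigr)_P$, i.e. the degenerate conic together with a length-one embedded point at $P$ whose extra direction is $\partial/\partial x_0$, transverse within $T$ to the plane $V(x_0)$. Thus the embedded point is exactly the first infinitesimal neighbourhood $2P|_T$, the Hilbert polynomial of the special fibre is $(2d+1)+1=2d+2$ (matching the general fibre), and the special fibre is the sundial $X_0=M+L+2P|_{\langle X_1\rangle}$, as required.

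The main obstacle is precisely the flat-limit step: one must argue rigorously that the naive specialization $J_0$ really is the flat limit, since setting $t=0$ in a generating set need not produce the flat limit in general, and one must confirm that the embedded structure at $P$ is exactly $2P|_T$ and not some other length-one scheme. The clean route is to compute an initial (Gr\"obner) ideal of $\mathcal X$ and observe that it is independent of $t$, which forces $k[x_0,\dots,x_3][t]/\mathcal X$ to be a free $k[t]$-module and hence $J_0$ to be the flat limit; the local identity $(x_0,x_1x_3)\cap\mathfrak m_P^{2}=J_0$ at $P$ then pins down the embedded point. Alternatively, for the cited statement one may simply invoke \cite[Example 2.1.1]{HH} for $n=3$ together with the reduction above, as recorded in \cite[Lemma 2.5]{CCG1}.
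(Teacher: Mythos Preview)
Your proposal is correct. The paper itself offers no proof of this lemma, merely citing \cite[Example~2.1.1]{HH} for $n=3$ and \cite[Lemma~2.5]{CCG1} for the general case; your argument supplies precisely what those references contain, namely the reduction to the $3$-space $T=\langle X_1\rangle$ together with an explicit one-parameter family of ideals whose flat limit is the sundial.

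A few remarks on the details, all of which you already flag. Your Gr\"obner check is the cleanest way to certify flatness: with respect to a lex order with $x_0>x_1>x_2>x_3$, the four generators $x_0x_3,\ x_0^2-tx_0x_2,\ x_1x_3,\ x_0x_1-tx_1x_2$ are already a Gr\"obner basis over $k[t]$ (all S-pairs reduce to $0$), the initial ideal $(x_0^2,x_0x_1,x_0x_3,x_1x_3)$ is independent of $t$, hence $R/\mathcal I$ is $k[t]$-free and the naive specialisation $J_0$ is genuinely the flat limit. The identification $J_0=(x_0,x_1x_3)\cap\mathfrak m_P^{2}$, which you state locally at $P$, in fact holds globally in the homogeneous coordinate ring (both sides equal $(x_0^2,x_0x_1,x_0x_3,x_1x_3)$), so $J_0$ is exactly the ideal of $M+L_0+2P|_T$. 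The only cosmetic point is that your phrase ``the embedded point is exactly the first infinitesimal neighbourhood $2P|_T$'' slightly overstates things: the embedded primary component has length~$1$, pointing in the $\partial/\partial x_0$ direction transverse to the plane of the conic, and it is the \emph{union} with the conic that coincides with $(M+L_0)\cap 2P|_T$ near $P$; but this is precisely what you compute, so the argument is sound.
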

Note that we can also easily degenerate a simple generic point and a
degenerate conic to a sundial. Therefore, a sundial is either a
degeneration of two generic lines, or a degeneration of a scheme
which is the union of a degenerate conic and a simple generic point.

We recall here the main result in \cite{CCG3}  which guarantees that
a generic collection of sundials will behave well with respect to
the postulation problem.
\begin{thm}\cite[Theorem 4.4]{CCG3}\label{sundial}
Let $n,d\in \NN$, and $n\geq 3$. Let the scheme $X\subset \PP^n$ be
a generic union of  $x$   sundials and $y$ lines.  Then $X$ has good
postulation, i.e.
$$h^0(\II_X(d))= \max\left\{ {d+n\choose n}- (2x+y)(d+1), 0\right\}. $$
\end{thm}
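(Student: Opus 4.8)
The plan is to prove the statement by a double induction on the pair $(n,d)$, using Castelnuovo's inequality (Lemma \ref{cas}) with respect to a generic hyperplane $H\cong\PP^{n-1}$ as the inductive engine. Fix the scheme $X$, a generic union of $x$ sundials and $y$ lines in $\PP^n$. Since good postulation means that one of $h^0(\II_X(d)),h^1(\II_X(d))$ vanishes and the Euler characteristic is fixed by the Hilbert polynomial, it suffices to control cohomology in the single degree $d$ and treat the two regimes (expected $h^0=0$ versus expected $h^1=0$) separately. For the base cases I would take $d\le 2$, where the statement can be checked directly, together with the anchor $n=3$, which is precisely the Hartshorne--Hirschowitz setting enriched by the embedded points: a sundial in $\PP^3$ is a degenerate conic with an embedded point, so $x$ sundials and $y$ lines is a specialization of $2x+y$ generic lines, and Theorem \ref{HH th} plus semicontinuity give the result once one verifies the specialization is flat (this is the content of the sundial degeneration lemma already recorded in \S\ref{dege}).

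First I would set up the Castelnuovo step. Choosing $H$ generic, I must understand the trace $Tr_H(X)$ and the residual $Res_H(X)$. A generic line meets $H$ in one simple point, and its residual is empty (degree drops, nothing survives). The delicate object is a sundial: its support is a degenerate conic $L+M$ living in a $\PP^3$, carrying an embedded point $2P|_T$ at the node $P$. Against a generic $H$, the trace of a sundial is two generic points (the two points where $L$ and $M$ cross $H$), while its residual is again essentially a conic-with-embedded-point structure but of one lower degree in the relevant sense. The heart of the bookkeeping is to show that $Tr_H(X)$ is a generic collection of $2x+y$ points in $H\cong\PP^{n-1}$ and that $Res_H(X)$ is again a generic union of sundials and lines in $\PP^n$ — so that both terms on the right of Lemma \ref{cas} are governed by the inductive hypothesis (the trace term by the known good postulation of generic points, the residual term by the theorem one dimension or one degree down). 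If both right-hand terms vanish in the appropriate range, Castelnuovo forces $h^0(\II_X(d))=0$, which is the surjectivity (expected $h^1=0$) half of the statement.

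To make this run I would need the numerics to align: the count $2x+y$ matching on the trace side and the residual's parameters landing inside the inductive range. This is where an \emph{\`a la diff\`erence Horace} refinement is typically required — rather than all of $X$ meeting $H$ transversally, one specializes some sundials or lines to lie in $H$ so that the split between trace and residual has exactly the right dimensions, because a blunt transverse cut generally leaves a numerical gap of size one and the bare Castelnuovo inequality is too lossy.

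The main obstacle I expect is exactly this gap-closing at the boundary of the two regimes, i.e.\ for the critical degree $d$ where $2x+y$ is near $\binom{d+n}{n}/(d+1)$, and in controlling the embedded point of a sundial under the hyperplane section: one must verify that the embedded nilpotent at the node contributes the correct single extra condition to the trace (so that each sundial genuinely behaves like two points in $Tr_H$ while its residual genuinely behaves like a lower-degree sundial), rather than collapsing or doubling. Managing this interaction — ensuring the residual of a sundial is again a sundial-type scheme of the flavor the induction can absorb, and not some degenerate hybrid — is the crux, and I would handle it by a careful local analysis of $(\II_X:\II_H)$ at $P$ combined with a Horace specialization that places just enough components on $H$ to balance the dimension count before invoking semicontinuity to return to the generic configuration.
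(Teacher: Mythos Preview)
First, note that the present paper does not prove Theorem~\ref{sundial}; it is quoted from \cite{CCG3} and used as a black box. So there is no ``paper's own proof'' here to compare against.

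That said, your sketch contains two concrete errors that would make it fail as written.

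The semicontinuity in your base case $n=3$ runs the wrong way. A sundial is a flat \emph{limit} of two skew lines, i.e.\ a specialization, not a generalization. Upper semicontinuity of $h^0$ gives $h^0(\text{sundial})\ge h^0(\text{two generic lines})$, so Theorem~\ref{HH th} for generic lines tells you nothing about the sundial configuration. The implication goes the other way: good postulation of sundials \emph{implies} Hartshorne--Hirschowitz, not conversely. The $n=3$ case therefore needs independent work and is in fact the hardest part of \cite{CCG3}.

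Your computation of residuals is also incorrect. For a line $L$ not contained in $H$, one has $(\II_L:\II_H)=\II_L$ because $\II_L$ is prime and the equation of $H$ is not in it; hence $Res_H(L)=L$, not $\emptyset$. Likewise a sundial transverse to $H$ has residual equal to the entire sundial, not a ``one lower degree'' object. A transverse hyperplane cut simply does not reduce the residual at all; the Horace method only gains traction when you \emph{specialize} some lines and sundials into $H$, so that those components vanish from the residual and appear as full lines and degenerate conics in the trace. Your later paragraph gestures at this specialization, but your actual bookkeeping (``trace is $2x+y$ points'', ``residual is again sundials and lines of lower degree'') describes the transverse situation as if it already performed the reduction, which it does not. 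The genuine argument in \cite{CCG3} turns on choosing how many sundials and lines to push into $H$ so that both the trace in $\PP^{n-1}$ and the residual in degree $d-1$ land in the inductive range, and then handling the resulting trace scheme, which contains degenerate conics and is not merely a set of points.
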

\section{Outline of the proof of Theorem \ref{th1}}\label{out}
Now we have all the necessary tools to tackle our main theorem,
Theorem \ref{th1}.
\subsection{The Exceptional Case}\label{exe}
We look for the case where $X$ fails to have good postulation.
 Actually, there is only one exception in this infinite family,
 namely the case $\{n=4, s=2\}$ which,  $H^0(\II_X(2))$
has dimension one instead of zero. As we will see below, this
exceptional case arises from geometric reason, although the proof
follows from numerical reason.

 Now we prove the following proposition, which completely describes the case $d=2$ of
 Theorem \ref{th1}:
\begin{prop}\label{d2}
 The scheme  $X\subset \PP^n, n\geq 3,$
consisting of $s\geq 1$ generic lines  and one double line $2L$ has
good postulation in degree $d=2$, i.e.,
\begin{eqnarray*}
h^0(\II_X(2))&=&\max \left \{ {n+2 \choose n}  -(2n+1)- 3s, 0 \right
\} \\
&=&\max \left\{ {n \choose 2} -3s, 0 \right\},
\end{eqnarray*}
except for the only case $\left\{n=4, s=2\right\}$.
\end{prop}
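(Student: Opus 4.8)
The plan is to compute $h^0(\II_X(2))$ directly by exploiting the very rigid structure of quadrics that are singular along a line. First I would choose coordinates so that $L=V(x_2,\dots,x_n)$; then a quadric $Q$ contains the double line $2L$ (equivalently, is singular along $L$) precisely when its defining form involves only the variables $x_2,\dots,x_n$. Hence $H^0(\II_{2L}(2))$ is exactly the space of quadratic forms in the $n-1$ variables $x_2,\dots,x_n$, of dimension $\binom{n}{2}$, recovering the count $\binom{n+2}{2}-(2n+1)=\binom{n}{2}$ predicted by Lemma \ref{HP}. Geometrically these forms are the cones with vertex $L$ over the quadrics of $\PP^{n-2}=V(x_0,x_1)$, that is, the pullbacks $\pi^{*}D$ of quadrics $D\subset\PP^{n-2}$ under the linear projection $\pi\colon\PP^n\dashrightarrow\PP^{n-2}$ away from $L$.

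The key reduction is that, since the lines $\ell_i$ are generic they are skew to $L$, so each projects isomorphically onto a line $m_i=\pi(\ell_i)\subset\PP^{n-2}$; moreover a cone $\pi^{*}D$ contains $\ell_i$ if and only if $D$ contains $m_i$. Because the $\ell_i$ are generic, the images $m_1,\dots,m_s$ are generic lines in $\PP^{n-2}$, and pullback by $\pi$ identifies $H^0(\II_X(2))$ with $H^0(\PP^{n-2},\II_{m_1+\cdots+m_s}(2))$. Thus the entire problem collapses to counting quadrics through $s$ generic lines in $\PP^{n-2}$.

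For $n\geq 5$ we have $n-2\geq 3$, so I would invoke the Hartshorne--Hirschowitz theorem (Theorem \ref{HH th}) directly in $\PP^{n-2}$: it yields $h^0(\PP^{n-2},\II_{m_1+\cdots+m_s}(2))=\max\{\binom{n}{2}-3s,0\}$, which is exactly the asserted value with no exception. The two low cases must be treated by hand, since Hartshorne--Hirschowitz applies neither to $\PP^2$ nor to $\PP^1$. For $n=3$ the target is $\PP^1$, where $m_i$ is the whole space, so no nonzero binary quadratic can vanish on $m_i$; hence $h^0(\II_X(2))=0$ for every $s\geq 1$, matching $\max\{3-3s,0\}=0$.

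The case $n=4$ is the crux and the source of the single exception, and here $D$ ranges over conics in $\PP^2$ through $s$ generic lines. A single line imposes three independent conditions ($h^0=3=\max\{6-3,0\}$); two distinct generic lines $m_1,m_2$ lie only on the reducible conic $m_1\cup m_2$, forcing $h^0=1$ against the expected $\max\{6-6,0\}=0$ — this is precisely the geometric obstruction behind $\{n=4,s=2\}$; and three or more generic lines lie on no conic, giving $h^0=0=\max\{6-3s,0\}$ for $s\geq 3$. The main obstacle is therefore concentrated entirely in this $\PP^2$ computation, together with the honest verification that projection from $L$ carries generic lines of $\PP^4$ to distinct, spanning (hence generic) lines of $\PP^2$, so that the reducible-conic phenomenon genuinely produces the extra section responsible for the exception.
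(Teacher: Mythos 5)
Your proposal is correct and follows essentially the same route as the paper: identify sections of $\II_X(2)$ with quadric cones having vertex along $L$, project from $L$ to $\PP^{n-2}$ to reduce to quadrics through $s$ generic lines there, invoke Hartshorne--Hirschowitz when $n-2\geq 3$, and settle $n=3$ and $n=4$ by hand, with the reducible conic through two generic lines in $\PP^2$ accounting for the exception $\{n=4,s=2\}$. The only difference is cosmetic: you make the coordinate description of $H^0(\II_{2L}(2))$ and the $\PP^2$ case explicit where the paper states them more briefly.
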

\begin{proof}
 The sections of $\II_X(2)$ correspond to quadric hypersurfaces
 in $\PP^n$ which, in order to contain $2L$, have to be cones whose
 vertex contains the line $L$.

If $n=3$, we obviously have $h^0(\II_X(2))=0,$ as expected.

If $n\geq 4$, we consider the projection $X'$ of $X$ from $L$ onto a
generic linear subspace
 $\PP^{n-2}\subset \PP^n$, hence $X'$ is a scheme consisting of
 $s$ generic lines in $\PP^{n-2}$.
 It follows that
 $$h^0(\PP^n,\II_X(2))= h^0(\PP^{n-2},\II_{X'}(2)).$$

  In case $n>4$, by Hartshorne--Hirschowitz theorem (Theorem \ref{HH th}) we get
$$h^0(\PP^{n-2},\II_{X'}(2))= \max \left\{{2+n-2 \choose 2} -3s,
0\right\}= \max\left\{ {n \choose 2} -3s, 0\right\},$$ and we get
the conclusion.

In case $n=4$, $X'$ is a generic union of $s$ lines in $\PP^2$.
  Hence, for $s>2$ it is immediate to see that $h^0(\II_{X'}(2))=
  0$.
 For $s\leq 2$ we have $h^0(\II_{X'}(2))= {2-s+2 \choose 2}= {4-s
 \choose 2}$, on the other hand the expected value for
 $h^0(\II_{X}(2))$ is  $\max\left\{ 6 -3s, 0\right\}$.
 Thus for $s=1$,  $h^0(\II_{X}(2))= 3,$  as expected; but  for
 $s= 2$, $h^0(\II_{X}(2))= 1$ while the expected
 one is $0$, which is what we wanted to show.
\end{proof}
\subsection{Rephrasing  Theorem \ref{th1}}\label{reph}
From what we have observed in the previous subsection, it remains to
verify Theorem \ref{th1} for $d\geq 3$,  asserts that schemes
$X\subset \PP^n$ consisting of $s$ generic lines and one generic
double line have good postulation for all $d\geq 3$, i.e.,
$h^0(\II_X(d))=0$ or $h^1(\II_X(d))=0.$ (Note that the case $d=1$ of
Theorem \ref{th1} being trivial, so we omit it.)

First note that,
 as $X$ varies in a flat family, by the semicontinuity of cohomology,
the condition of good postulation, is clearly an open condition on
the family of $X$. Hence to prove Theorem \ref{th1}, it is enough to
find any scheme  of $s$ lines and one double line, or even any
scheme which is a specialization of a flat family of $s$ lines and
one double line, which has good postulation.

Given $n$ and $d$, suppose one can choose $s$ so that:
$${d+n \choose n}= (nd+1)+ s(d+1)$$
and suppose  one can find $X$ so that  $h^0(\II_X(d))=
h^1(\II_X(d))= 0$. Then if one removes some lines from $X$, one gets
a scheme $X'\subset X$ such that $h^1(\II_{X'}(d))$ will still be
zero; and if one adds some lines to $X$, one gets a scheme
$X''\supset X$ such that $h^0(\II_{X''}(d))$ will still be zero (by
Remark \ref{remark}). In other words, the good postulation for that
given $n, d$ and the unique integer $s$, that gives the equality
above, implies the good postulation for the same $n, d$ and any $s$
whatsoever.

Unfortunately for given $n, d$ one can not always find such an $s$.
Therefore we will make adjustments by adding some collinear  points
to $X$, to get a similar equality. In particular,  we prove the
following theorem which,   by Remark \ref{remark}, implies our main
Theorem \ref{th1}:
\begin{thm}\label{th2}
 Let  $n,d\in\NN$, and $n,d\geq
3$. Let
$$r= \left \lfloor {{d+n \choose n} -  (nd+1)}\over {d+1}\right \rfloor
 ; \ \ \ \
q={d+n \choose n} - (nd+1) - r(d+1).
 $$
Let the scheme  $X\subset \PP^n$ be a generic union of $r$ lines
$L_1,\ldots,L_r$, one double line $2L$ and $q$  points
$P_1,\ldots,P_q$ lying on a generic line $M$.  Then $X$ has good
postulation, i.e.,
$$h^1(\II_X(d))= h^0(\II_X(d)) =   {d+n \choose n} -(nd+1) - r(d+1)- q= 0.$$
\end{thm}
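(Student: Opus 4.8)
The plan is to prove the single vanishing $h^0(\II_X(d))=0$. Since the integers $r,q$ are chosen exactly so that $(nd+1)+r(d+1)+q=\binom{d+n}{n}$, the piece-wise count of conditions matches $h^0(\OO_{\PP^n}(d))$; thus $\chi(\II_X(d))=0$, whence $h^0(\II_X(d))=h^1(\II_X(d))$ and the vanishing of $h^0$ yields the full statement. I would establish this vanishing by induction, the engine being Castelnuovo's inequality (Lemma \ref{cas}) applied to a well-chosen hypersurface $H$, which bounds $h^0(\II_X(d))$ by $h^0(\II_{Res_H(X)}(d-\deg H))+h^0(\II_{Tr_H(X)}(d))$. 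The heart of the matter is to \emph{specialize} the components of $X$ relative to $H$ so that both terms on the right vanish, and then invoke semicontinuity: the specialized configuration must be a genuine flat limit of $X$, which is guaranteed for the skew lines by the sundial degeneration underlying Theorem \ref{sundial}.

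For the inductive step I would fix $n\geq 5$ and induct on $d$, taking $H\cong\PP^{n-1}$ a generic hyperplane. Specialize the double line $2L$ so that $L$ is transverse to $H$: then $Tr_H(2L)$ is a double point $2P$ inside $H$, while $Res_H(2L)=2L$ survives in $\PP^n$. Split the simple lines by placing $a$ of them inside $H$ (each contributing a full line to the trace and nothing to the residual) and leaving the remaining $r-a$ generic (each contributing one point to the trace and a full line to the residual). With this choice the trace is a generic union of lines and \emph{one double point} in $\PP^{n-1}$, which has good postulation by Corollary \ref{CCG} (here $n-1\geq 4$, so the $\PP^3$ exception never intervenes), while the residual is again a generic union of lines and \emph{one double line} in $\PP^n$, now in degree $d-1$, handled by the inductive hypothesis, with Proposition \ref{d2} serving as the base of the degree induction. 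The collinear points $P_1,\dots,P_q$ function as numerical slack, to be absorbed through Lemma \ref{line}, whose hypothesis $h^0(\II_{X+M}(d))=0$ is itself an instance of the statement with one additional line (so it follows from Remark \ref{remark} together with the cases already proved).

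The two base regimes $n=3$ and $n=4$ have to be settled by hand in Sections \ref{s3} and \ref{s4}, because there the trace of the reduction above lands in a space ($\PP^2$, respectively $\PP^3$) where the line-and-double-point result is either unavailable or exceptional. For $n=3$ I would instead apply Castelnuovo's inequality with a smooth quadric surface $Q\cong\PP^1\times\PP^1$, degenerating $2L$ and a suitable number of the lines onto $Q$ and computing on $Q$ by bidegree intersection theory, so that the residual degree drops by two; for $n=4$ I would combine the hyperplane reduction with the explicit exceptional analysis already carried out in the case $d=2$ and a direct check of the first few degrees.

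The main obstacle is the integer bookkeeping. Because $\binom{d+n}{n}=\binom{d+n-1}{n-1}+\binom{d-1+n}{n}$ and the conditions imposed by the pieces split along the same identity, the expected values of $h^0$ for the trace and for the residual \emph{sum to zero}; hence to conclude I must make \emph{both} nonpositive, which forces each to vanish \emph{simultaneously}. This pins the admissible value of $a$, and the way the $q$ collinear points may be distributed (constrained by the fact that a line not contained in $H$ meets it in a single point), to a narrow, residue-dependent window, and verifying that such a choice always exists within the allowed range $0\le a\le r$ is precisely the content of the numerical lemmas gathered in the Appendix. This is also exactly where low dimension breaks the induction, accounting for the separate and more delicate treatment in dimensions $3$ and $4$, and ultimately for the genuine failure recorded in the exceptional case $\{n=4,s=2,d=2\}$.
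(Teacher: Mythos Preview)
Your outline captures the broad strategy correctly---Castelnuovo's inequality with a hyperplane, induction on $d$, separate treatment of low dimensions---but there is a genuine gap in the inductive step for $n\geq 5$. With your specialization (place $a$ lines in $H$, leave $r-a$ transverse, keep $2L$ transverse and the $q$ collinear points off $H$), the conditions imposed by $Res_H(\X)$ and by $Tr_H(\X)$ sum to \emph{exactly} $\binom{d+n}{n}$. Hence for both $h^0$'s to vanish you need both counts to hit their targets \emph{on the nose}, and a short calculation gives
\[
(r-a)d \;=\; \binom{d-1+n}{n}-(n(d-1)+1)-q.
\]
That determines $a$ uniquely, and it is an integer only when
\[
q' \;:=\; \Bigl[\binom{d-1+n}{n}-(n(d-1)+1)-q\Bigr]\bmod d
\]
equals $0$, which is not the generic situation. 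Redistributing the collinear points does not help: since $M$ meets $H$ in a single point, you can send either all $q$ of them to the trace or none, and that covers at most two residues modulo $d$.

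This is exactly why the paper does not merely move whole lines into $H$. It degenerates $q'$ \emph{pairs} of lines to sundials $\widehat{C_i}=C_i+2N_i|_{H_i}$ and specializes them so that each conic $C_i\subset H$ while the embedded direction points out of $H$; then each sundial contributes a degenerate conic ($2d+1$ conditions) to the trace and a single simple point ($1$ condition) to the residual. Those $q'$ residual points are precisely what is needed to balance the count, so that $Res_H(\X)$ becomes $2L$ together with $r'$ lines, $q'$ generic points, and the $q$ collinear points, to which $\St(n,d-1)$ and Lemma~\ref{line} apply. The price is that $Tr_H(\X)$ now contains $q'$ degenerate conics in addition to lines, one double point, and simple points---so it is \emph{not} directly covered by Corollary~\ref{CCG} as you assert. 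The paper disposes of this trace by a second Castelnuovo step inside $H\cong\PP^{n-1}$ (against a further hyperplane $H'$ when $n\geq 5$, against a smooth quadric when $n=4$), invoking Theorem~\ref{sundial} after re-degenerating conics plus points back to sundials. The Appendix lemmas (Lemmas~\ref{ap1}--\ref{ap3}) are tailored to \emph{that} two-step bookkeeping with the parameters $r',q',x,\bar r,\bar q,\hat r,\hat q$, not to the single parameter $a$ of your scheme.
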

From our discussion above,  Theorem \ref{th1} follows immediately
from this theorem. Indeed, to prove Theorem \ref{th1} for that $n,
d$ and any $r'\leq r$, simply remove the $q$ points and $r-r'$
lines, then the corresponding $h^1(\II(d))$ will be zero; to prove
it for $r''>r$, first add the line $M$ passing through the $q$
collinear points, then add $r''-r-1$ disjoint lines, then the
corresponding $h^0(\II(d))$ will be zero.
\begin{note}
We denote  by  $\St(n,d)$ and $\St^*(n,d)$ for $d\geq 3$,  the
statement of Theorem \ref{th1} and the statement of Theorem
\ref{th2}, respectively.
\end{note}
\begin{rem}
As we   have seen, the statement  $\St^*(n,d)$ implies $\St(n,d)$.
On the other hand, the converse also follows directly from  Lemma
\ref{line}.
\end{rem}
\subsection{Strategy of the Proof}\label{st}
We illustrate our general strategy explicitly to prove  Theorem
\ref{th2}  for a generic scheme $X\subset\PP^n$ consisting of one
double line, $r$ simple lines and $q$ collinear points,  as follows:
The difficulty with proving a property like ``good postulation" is
that, it is very hard to lay hands on a
 \emph{generic} scheme $X$. Our approach to overcome to this difficulty is
 to start with a special scheme, which is obtained by
 several different kind of specializations and degenerations,  and then use
  semicontinuity theorem for cohomology groups to discover the
same  property for  generic scheme $X$.
 The next step is to  reduce the postulation problem of our scheme,
  via Castelnuovo's inequality, to
 the study of the postulation of a  residual
scheme  and a trace scheme, that is
 \emph{La m\'ethode d'Horace}, elaborated by A. Hirschowitz \cite{Hir}.

 To be more precise,  for $n\geq 4$ we
 specialize $x$ simple lines into a fixed hyperplane $\PP^{n-1}\subset \PP^n$
  and degenerate $q'$ other  pairs of
 simple lines to sundials,  further, we specialize these sundials into $\PP^{n-1}$
 unless their singular points,  which requires
 a capability of guessing the right
specialization.
 Thus if one can choose these numbers correctly, that is in such a way that  the numbers  $x$  and $q'$
  are sufficiently many  to comply with the
induction hypothesis on degree (see Appendix, Lemma \ref{ap1}), then
the residual has good postulation, while the trace is a scheme in
  $\PP^{n-1}$,  which is more complicated to verify  because of the appearance of $q'$  degenerate conics and one
  double point.
 Now to handle the problem involving the postulation of the trace scheme we specialize $\rr$ lines,
 $\qq$  simple points,  and
  the double point into a fixed hyperplane $\PP^{n-2}\subset \PP^{n-1}$, then
  we take again residual and trace. Of course, the numbers  $\rr$ and $\qq$
    should not be too numerous, and we have to find
   these numbers  satisfying all the necessary
  inequalities (see Appendix, Lemma \ref{ap2}).
   This time the trace  consists of $\rr$ lines, some simple points, and
   one double point, which by Corollary \ref{CCG} has good
   postulation, while the residual  consists of $q'$ degenerate
   conics, $(x-\rr)$ lines and some simple points, which we will degenerate it  to a scheme
   consisting of
  $q'$ sundials, $(x-\rr)$ lines and some points, that by Theorem \ref{sundial} has good
   postulation (these arrangements contain a lot of technical
   details which can be found in Appendix, Lemma \ref{ap2}).

This argument for the trace of $X$  can be applied in  cases $n\geq
5$, but unfortunately does not cover the case $n=4$, where forced
intersection of lines appear in $\PP^{n-2}= \PP^2$. In fact the case
$n=4$ will be taken care of by
 a smooth quadric surface $Q$
 and a way of specialization which is considerably different from that mentioned
above. Explicitly, we specialize one line of each of the degenerate
conics, together with $\hat{r}$ simple lines, into the same ruling
on $Q$, moreover, we specialize $\hat{q}$ simple points onto $Q$,
then we take again residual and trace. Surely, the numbers $\hat{r}$
and $\hat{q}$ should not be so much, and we have to find these
numbers satisfying all the necessary inequalities (see Appendix,
Lemma \ref{ap3}).
 Now the current residual
consisting
 of one double point, $(x-\hat{r}+q')$ lines and some simple points will be verified by Corollary
\ref{CCG}, while the current trace, which is a scheme in $Q$,  will
be verified by applying some results from internal geometry  of $Q$.

{\em What about in $\PP^3$?}
 Actually, the most difficult part of the proof is the case of $\PP^3$.
 Our approach to this case uses extremely an ad hoc method
 which is done via specializing as many lines as is needed
 into a smooth quadric surface instead of a plane, and then, if necessary,    degenerating
 some other pairs of simple lines to sundials (and even more
 specializing
  sundials and points), that
 requires a case by case discussion.
  Here the role of the smooth quadric is explained by the property
 of having two rulings of skew lines and by the known results from intersection theory on it.
 We notice that also  in the case of $\PP^3$ our method can then be
 applied under certain numerical conditions, and this is why
   the proof splits
 into three specific cases $d\equiv 0$ (mod 3), $d\equiv 1$ (mod 3)
 and $d\equiv 2$ (mod 3), which described exactly in Section \ref{s3}.
 In fact, our method can be safely applied for  $d\equiv 0$ (mod
 3), as well as for $d\equiv 1$ (mod 3), but a slight complication
 arises in the case of
 $d\equiv 2$ (mod
 3), where we have to consider a different specializaton, which is done by
 placing the support of the double line into the smooth quadric.

Summing up,   the method for proving our Theorem \ref{th2}, based on
the  induction on degree $d$,  breaks down into three  parts: $n=3$,
$n=4$, and $n\geq 5$, which we have to investigate each of them
separately in \S\S \ref{s3}--\ref{sn}.

We would like to point out that to make the strategy applicable,
many verifications are needed because of the messy arithmetic
involved (see \S \ref{app}).

Since to prove the property of good postulation, according to our
strategy,   we will  use in the sequel  Castelnuovo's inequality and
the semicontinuity of cohomology several times, it will be useful to
consider the following remark.
\begin{rem}\label{rem}
With the hypotheses of Theorem \ref{th2}, let
 $\widetilde{X}$ be the scheme obtained from $X$ by combining
 specializations and degenerations via a fixed hypersurface $H\subset \PP^n$ of
 degree $e$.

 If $h^0(\II_{Res_{H}(\X)}(d-e))= 0$ and $h^0(H,
 \II_{Tr_{H}(\X)}(d))=0$, then by Castelnuovo's inequality (Lemma
 \ref{cas}) we have $h^0(\II_{\X}(d))= 0$, and this implies, by the
 semicontinuity of  cohomology, $h^0(\II_X(d))= 0$.
\end{rem}
\section{Proof  in $\PP^3$}\label{s3}
In this section we  prove Theorem \ref{th2} in $\PP^3$.

 We start with a useful observation concerning
the behaviour of certain one-dimensional subschemes  of a smooth
quadric surface $Q\cong \PP^1\times\PP^1$ with respect to the linear
system of curves of type $(a,b)$, which we will often use in the
sequel (for a  proof  see \cite[Lemma 2.3]{HH}).
\begin{lem}\label{hh}
Let $\alpha,\beta,\gamma,\delta, d\in \NN$, and let $Q\subset \PP^3$
be a smooth quadric. Let the scheme $W\subset Q$ be a generic union
of $\alpha$ lines belonging to the same ruling of $Q$,
  $\beta$ simple  points,
 $\gamma$
 simple points lying on a line belonging to the same ruling of the $\alpha$ lines,
and $\delta$ double points. If the following conditions are
satisfied:
\begin{itemize}
\item[(1)] $\alpha(d+1)+ \beta+ \gamma+ 3\delta= (d+1)^2$;
\item[(2)] $\delta\leq d+1$;
\item[(3)] $\gamma\leq d+1$;
\item[(4)] if $d>\alpha$ then $\delta\leq \frac{d+1-\gamma}{2}+ (d-\alpha-1)\left\lfloor
\frac{d+1}{2}\right\rfloor$, otherwise $\delta= 0$;
\end{itemize}
then $h^1(Q,\II_W(d))= h^0(Q,\II_W(d))= 0$.
\end{lem}
Before we begin our investigations in the case of $\PP^3$,
 we recall some elementary facts about the geometry on the smooth  quadric surface $Q$:
 the divisor class group of $Q$ is $\ZZ\oplus\ZZ$, generated by a
 line in each of the two rulings; by the type we mean the class in
 $\ZZ\oplus \ZZ$; the curves on $Q$ are those of type $(a,b)$ with
 $a,b\geq 0$; by convention
 $\OO_Q(d)= \OO_Q(d,d)$; finally $h^0(Q,\OO_Q(a,b))= (a+1)(b+1)$.

Now we state and prove Theorem \ref{th2} in $\PP^3$, that is:

\vspace{0.2cm}
 $\St^*(3,d)$:
  Let $d\geq 3$ and
$$r= \left \lfloor {{d+3 \choose 3} -  (3d+1)}\over {d+1}\right \rfloor
 ; \ \ \ \
q={d+3 \choose 3} - (3d+1) - r(d+1).
 $$
Let the scheme  $X\subset \PP^3$ be a generic union of $r$ lines
$L_1,\ldots,L_r$, one double line $2L$ and $q$  points
$P_1,\ldots,P_q$ lying on a generic line $M$.  Then $X$ has good
postulation, i.e.,
$$h^1(\II_X(d))= h^0(\II_X(d)) =   {d+3 \choose 3} -(3d+1) - r(d+1)- q= 0.$$
\begin{proof}
In order to start the induction argument we need to establish the
base cases $d=3, 4$.

First consider the case $d= 3$. In this case we have $r= 2$ and
 $q= 2$, therefore $X= 2L+ L_1+ L_2+ P_1+ P_2\subset \PP^3.$

 Fix a generic plane $H\subset \PP^3$, and consider the scheme
 $\widetilde{X}$  obtained from $X$ by specializing
  the line $L_1$ and the points $P_1, P_2$ into $H$. By abuse of
  notation, we will  again denote these specialized line and points by $L_1$ and $P_1,P_2$.
  (Keeping in mind that in the sequel, by abuse of notation, we will
  always
  denote the specialized components by the same letters as the
  original ones.)

We have $Res_H(\widetilde{X})= 2L+ L_2$, then  it is obvious that
$$h^0(\II_{Res_H(\widetilde{X})}(2))= 0.$$

 Also,
 $Tr_H(\widetilde{X})= 2R|_H+ L_1+ S+  P_1+ P_2\subset H$,
  where $L\cap H= R$ and $L_2\cap H= S$.
Since $L_1$ is a fixed component for the  sections of
 $\II_{Tr_H(\widetilde{X})}(3)$, we get that
 $$h^0(H,\II_{Tr_H(\widetilde{X})}(3))= h^0(H,\II_{Tr_H(\widetilde{X})- L_1}(2)).$$
Since the  double point $2R|_H$  imposes $3$ independent  conditions
on $|\OO_H(2)|$,
  and the points $P_1, P_2, S$ are generic points in $H$, we easily get
 that
 $$h^0(H,\II_{Tr_H(\widetilde{X})- L_1}(2))= {2+2 \choose 2}-3-3= 0.$$
  Thus  by Remark \ref{rem} we have $h^0(\II_X(3))= 0$, that is,
 $X$ has good postulation in degree $3$.

Now consider the case $d= 4$. We have $r= 4$ and $q= 2$, then $X$ is
the schematic union:
 $X= 2L+ L_1+ L_2+ L_3+ L_4+ P_1+ P_2\subset \PP^3$.

Let $Q$ be a smooth quadric surface, and let $\widetilde{X}$ be the
scheme obtained from $X$ by specializing three of the lines $L_i$ in
such a way that $L_1,L_2,L_3$ become lines of the same ruling on
$Q$, and by specializing the points $P_1,P_2$ onto $Q$.

Then we get $Res_Q(\widetilde{X})= 2L+ L_4\subset \PP^3,$ and it
clearly follows that  $$h^0(\II_{Res_Q(\widetilde{X})}(2))= 0.$$

Consider the trace of $\widetilde{X}$ on $Q$, that is
 $$Tr_Q(\widetilde{X})= 2R_1|_Q+ 2R_2|_Q+ L_1+ L_2+ L_3+ S_1+
S_2+ P_1+ P_2\subset Q,$$
 where $L\cap Q= R_1+ R_2$ and $L_4\cap Q= S_1+ S_2$.
 Note that the scheme $Tr_Q(\widetilde{X})$ is  generic union in $Q$  of
 three lines belonging to the same ruling of $Q$, four simple points and
 two double points, hence we can apply Lemma \ref{hh}, with $(\al= 3, \be= 4, \ga= 0, \de= 2, d=4)$,    and we obtain
$$h^0(Q,\II_{Tr_Q(\widetilde{X})}(4))= 0.$$
 So by Remark \ref{rem} it follows that
 $h^0(\II_X(4))= 0$. Hence the case $d=4$ is done.

Now assume $d\geq 5$. We consider three cases, and we proceed by
induction on $d$. Let $Q$ be a smooth quadric surface in $\PP^3$.

\vspace{0.2cm}
{\textbf{Case $d\equiv 0$ (mod 3).}}
 Write $d= 3t, t\geq 2$. Then
  $$r= \frac{(t+1)(3t+2)}{2}- 3,\ \ \ \ \ \ q= 2.$$
  We have $X= 2L+ L_1+\cdots+ L_r+ P_1+P_2\subset \PP^3$.
  Since $2t+1 \leq r$, we specialize $2t+1$ of the lines $L_i$
  in such a  way that  $L_1,\ldots,L_{2t+1}$ become lines of the
  same ruling on $Q$, and we denote by $\widetilde{X}$ the specialized
  scheme. We have
  $Res_Q(\widetilde{X})= 2L+ L_{2t+2}+\cdots+L_r+ P_1+ P_2\subset \PP^3,$
  which is the generic union of one double line, $\frac{t(3t+1)}{2}-
  3$ lines and two points,
  so by the induction  hypothesis it follows that
   $$h^0(\II_{Res_Q(\widetilde{X})}(d-2))= 0.$$
Now we treat the trace scheme
\begin{eqnarray*}
Tr_Q(\widetilde{X})&=&2R_1|_Q+ 2R_2|_Q+ L_1+\cdots+ L_{2t+1} \\
& &+ S_{1,2t+2}+ S_{2,2t+2}+ \cdots+ S_{1,r}+S_{2,r}\subset Q,
 \end{eqnarray*}
 where $L\cap Q= R_1+ R_2$ and $L_i\cap Q= S_{1,i}+ S_{2,i}$, $(2t+2\leq i\leq
 r)$. Note that the points $R_1, R_2, S_{1,i}, S_{2,i}, (2t+2\leq i\leq r),$ are generic points on $Q$.
That is  $Tr_Q(\widetilde{X})$ consists of $2t+1$ lines of the same
 ruling on $Q$, two generic double points and $t(3t+1)- 6$ generic simple points,
 then we can easily check that  $Tr_Q(\widetilde{X})$ satisfies the conditions of Lemma \ref{hh},
 with $(\al= 2t+1, \be= t(3t+1)- 6, \ga= 0, \de= 2)$,
 and this implies
  $$h^0(Q, \II_{Tr_Q(\widetilde{X})}(d))= 0.$$
  Hence by Remark \ref{rem}we get
  $h^0(\II_X(d))= 0$.

\vspace{0.2cm}
{\textbf{Case $d\equiv 1$ (mod 3).}}
 Write $d= 3t+1, t\geq 2$. Then
  $$r= \frac{(t+1)(3t+4)}{2}- 3,\ \ \ \ \ \ q= 2.$$
In this case we have $X= 2L+ L_1+\cdots+ L_{r}+ P_1+ P_2\subset
\PP^3.$

  We wish to construct a specialization of $X$
  so that the expected vanishing  $h^0(\II_X(d))=0$ is
 obtained. In order to do this, we introduce the specialization
 $\X$
 of $X$ in the following way:
\begin{itemize}
\item   specialize
   the points $P_1,P_2$  onto $Q$;
\item  specialize the first $2t+1$  lines $L_i$  in such a  way that
 they become lines of the
  same ruling on $Q$, and call the resulting set of lines $X_1$;
\item
  degenerate the next $2t-2$ pairs of  lines $L_i$, so that they become
   $2t-2$ sundials $\widehat{C_i}= C_i+ 2N_i$, $(1\leq i\leq 2t-2)$,  where $C_i$ is a
   degenerate conic
    and $2N_i$ is a double point with support at the singular point of $C_i$, furtheremore,
specialize
   the points $N_1,\ldots,N_{2t-2}$ onto $Q$,
   and call the
   resulting scheme of sundials  $X_2$, that is $$X_2= \widehat{C_1}+\cdots+ \widehat{C_{2t-2}},$$ with the
   property that the singular points of $\Ci$ lie on $Q$;
    \item leave the remaining simple lines $L_i$, which are   $r- (2t+1)- 2(2t-2)= \frac{t(3t-5)}{2}+2$ lines,
    generic outside $Q$, and call this collection of lines
     $X_3$;
\end{itemize}
 notice that we can do the above specialization because of the inequality  $r\geq 2t+1+
 2(2t-2)$.
 Then by letting  $$\X= 2L+ X_1+ X_2+ X_3+ P_1+ P_2\subset \PP^3,$$
  we  get the desired specialization of $X$.

 Now we perform the process of verifying the residual and the trace
 on this specialized scheme $\X$.
We obtain
 \begin{eqnarray*}
  Tr_Q(\X)&=& 2R_1|_Q+ 2R_2|_Q+ X_1+ Tr_Q(X_2) \\
  & &+ Tr_Q(X_3)+ P_1+
 P_2\subset Q,
 \end{eqnarray*}
where $L\cap Q= R_1+ R_2$;
 $Tr_Q(\Ci)= 2N_i|_Q+ C_i\cap Q$, and $C_i\cap Q$ is a union of two simple
 points,  $(1\leq i\leq 2t-2)$, therefor  $Tr_Q(X_2)$ consists of $2t-2$ double points and $4t-4$
 simple points; moreover,
 $Tr_Q(X_3)$ consists of $t(3t-5)+ 4$ simple points.
 Hence $Tr_Q(\X)$ is generic union in $Q$  of $2t+1$ lines belonging to the same ruling
 of $Q$,  $2t$ double points and $t(3t-1)+ 2$ simple points. An easy computation, yields
 that the scheme $Tr_Q(\X)$
 verifies the conditions of Lemma \ref{hh}, with $(\al= 2t+1, \be= t(3t-1)+ 2, \ga= 0, \de= 2t)$,  then we have
 $$h^0(Q, \II_{Tr_Q(\X)}(d))= 0.$$ So we are done with $Tr_Q(\X)$.
If we can prove  $h^0(\II_{Res_Q(\X)}(d-2))= 0$ then, by
Castelnuovo's inequality, we get $h^0(\II_{\X}(d))= 0$.

 Here we consider the residual scheme
 $$Res_Q(\X)= 2L+ C_1+ \cdots+ C_{2t-2}+ X_3\subset \PP^3.$$
 In order to compute $h^0(\II_{Res_Q(\X)}(d-2))$, we need to construct a
 specialization of $Res_Q(\X)$, and take again the residual and the trace with
 respect to $Q$.

 First, let $M_{1,i}, M_{2,i}$ be the two lines which form the degenerate
 conic $C_i$, $(1\leq i\leq 2t-2)$, this means $C_i= M_{1,i}+
 M_{2,i}$. Pick a line $L'\subset X_3.$
 Now let $\R$ be the scheme obtained from $Res_Q(\X)$ by specializing the
 degenerate conics $C_i$ and  the lines $L, L'$
 in such a way that the lines
 $M_{1,1}\ldots,M_{1,2t-2}$ and $L, L'$ become $2t$ lines of the same ruling
 on $Q$ (the lines $M_{2,1}\ldots,M_{2,2t-2}$ and the other $\frac{t(3t-5)}{2}+ 1$ lines of $X_3$ remain generic lines,
 not lying on $Q$).

 From this specialization we have
$$Res_Q(\R)= L+ M_{2,1}+\cdots+ M_{2,2t-2}+ (X_3-L'),$$ that is  generic
union of $\frac{t(3t-1)}{2}$ lines in $\PP^3$, hence by
Hartshorne--Hirschowitz theorem (Theorem \ref{HH th}) we immediately
get, (note that $d= 3t+ 1$),
$$h^0(\II_{Res_Q(\R)}(d-4))= { {d-4+3}\choose{3}}- \frac{t(3t-1)}{2}(d-4+1)= 0.$$
On the other hand,
 $M_{2,i}$ meets $Q$ in the two points which are $M_{1,i}\cap
 M_{2,i}$, that is contained in $M_{1,i}$,  and another point, which we denote by $S_i$.
 Thus
\begin{eqnarray*}
 Tr_Q(\R)&=& 2L|_Q+ M_{1,1}+\cdots+ M_{1,2t-2}+ L' \\
& &+ S_1+\cdots+ S_{2t-2}+ Tr_Q(X_3-L')\subset Q,
 \end{eqnarray*}
 where $Tr_Q(X_3-L')$ is
made by $t(3t-5)+2$ generic points. Therefore the scheme $Tr_Q(\R)$
is  generic union in $Q$ of one double line, $2t-1$ lines, such that
all of these $2t$ lines are placed in the same ruling of $Q$, and
$3t(t-1)$ points. Considering $Q$ as $\PP^1\times \PP^1$ and
assuming these $2t$ lines belong to the first ruling of $Q$, we see
that  each of these lines is a curve of type $(1,0)$ on $Q$.

 Note that the double line  $2L|_Q$
and the lines $M_{1,i}, L'$, $(1\leq i\leq 2t-2)$, are fixed
components for the curves  of $H^0(Q,\II_{Tr_Q(\R)}(d-2,d-2))$,
since $d-2\geq 2t+1$. Now set $\Lambda= 2L|_Q+ M_{1,1}+\cdots+
M_{1,2t-2}+ L'\subset Q$, which is of type $(2t+1,0)$. Hence by
removing the fixed component $\Lambda$, and by using the fact that
the scheme $Tr_Q(\R)- \Lambda$ is  generic union of $3t(t-1)$ simple
points, moreover by recalling the equality $d= 3t+1$, we deduce
\begin{eqnarray*} h^0(Q,
\II_{Tr_Q(\R)}(d-2, d-2))&=& h^0(Q,
\II_{Tr_Q(\R)- \Lambda}(d-2-(2t+1),d-2)) \\
 &=& h^0(Q,\II_{Tr_Q(\R)-\Lambda}(t-2,3t-1)) \\
 &=&  h^0(Q,\OO_Q(t-2,3t-1))- 3t(t-1) \\
 &=& (t-1)3t- 3t(t-1)=0.
\end{eqnarray*}

This together with $h^0(\II_{Res_Q(\R)}(d-4))= 0$, implies that
$$h^0(\II_{\R}(d-2))= 0,$$ consequently, by semicontinuity,
$h^0(\II_{Res_Q(\X)}(d-2))= 0.$ So we conclude that
$h^0(\II_{\X}(d))= 0$, and from here, by Remark \ref{rem}, we get
$h^0(\II_X(d))= 0$.

 \vspace{0.2cm}
{\textbf{Case $d\equiv 2$ (mod 3).}}
 Write $d= 3t+2, t\geq 1$. Then
  $$r= \frac{3t(t+3)}{2},\ \ \ \ \ \ q= t+3.$$
  We have $X= 2L+ L_1+\cdots+ L_r+ P_1+\cdots+ P_{t+3}\subset \PP^3,$ where $P_1,\ldots, P_{t+3}$ are
  points lying on a generic line $M$.

Realize  $Q$ as $\PP^1\times \PP^1$.
 We  specialize $2t$ of the lines $L_i$ and the lines $L, M$
  in such a  way that  $L_1,\ldots,L_{2t}$ and $L, M$ become $2t+2$ lines of the
  first ruling on $Q$, i.e. each  has type $(1,0)$,
   and  we denote by $\widetilde{X}$ the specialized
  scheme (note that this is possible since $r\geq 2t+2$).
  It is clear   from this specialization that the points
  $P_1,\ldots,P_{t+3}$ become points on the line $M$ belonging to the
  first
  ruling of  $Q$.

First we consider the residual scheme
 $$Res_Q(\X)= L+ L_{2t+1},\cdots+ L_r\subset \PP^3,$$ that is  generic
 union of $r-2t+1= \frac{(t+1)(3t+2)}{2}$ lines, so according to
 Hartshorne--Hirschowitz theorem we get
 $$h^0(\II_{Res_Q(\X)}(d-2))= { d-2+3 \choose 3}-
 \frac{(t+1)(3t+2)}{2} (d-2+1)= 0.$$
Then we are left with the trace scheme, which is
 $$Tr_Q(\X)= 2L|_Q+ L_1+\cdots+ L_{2t}+ X_1+ P_1+\cdots+
 P_{t+3}\subset Q,$$
where $X_1= Tr_Q(L_{2t+1}+\cdots+ L_r)$. Using the fact that each
$L_i$  meats $Q$ at two  points,  $(2t+1\leq i\leq r)$, it follows
that $X_1$ is made by $2(r-2t)= t(3t+5)$ simple points.

Observe that the double line  $2L|_Q$ and the lines
$L_1,\ldots,L_{2t}$ are fixed components for the curves of
$H^0(Q,\II_{Tr_Q(\X)}(d,d))$, (note that $d\geq 2t+2$). Set
$\Lambda= 2L|_Q+ L_1+\cdots+ L_{2t}\subset Q$, which has type
$(2t+2,0)$.
 Removing the fixed component $\Lambda$ implies that
 \begin{eqnarray*}
  h^0(Q,\II_{Tr_Q(\X)}(d, d))&= & h^0(Q,
\II_{Tr_Q(\X)- \Lambda}(d-(2t+2),d)) \\
 &=& h^0(Q,\II_{Tr_Q(\X)-\Lambda}(t,3t+2)).
\end{eqnarray*}
  Hence we need to show that $h^0(Q,\II_{Tr_Q(\X)-\Lambda}(t,3t+2))=
  0,$ where
$Tr_Q(\X)- \Lambda= X_1+ P_1+\cdots+
 P_{t+3}\subset Q.$
 To see this,  we wish to construct a specialization of
 $Tr_Q(\X)-\Lambda$ with the desired vanishing,  we then must verify the residual and the
 trace in this new situation.

We start by choosing $t$ lines $M_1,\ldots,M_t$ of the first ruling
on $Q$, $M_i\neq M$. Next,
 let $Y$ be the scheme obtained from $Tr_Q(\X)- \Lambda$ by specializing
    the $t(3t+3)$  points of
$X_1$  onto the lines $M_i$ in such a way that each of these lines
contains exactly $3t+3$ of these points, and by  specializing the
remaining $2t$ points of $X_1$ onto the line $M$ (this is possible
because $t(3t+5)= 2t+ t(3t+3)$).

Now suppose that $C$ is a curve of $H^0(Q,\II_Y(t,3t+2))$, i.e. a
curve on $Q$ of type $(t,3t+2)$ containing $Y$.
 As we have just seen,   the line $M$ and also each  line $M_i$, $(1\leq i\leq t)$,  contains $3t+3$
points of $Y$. The fact that $C$ contains these points forces $C$ to
have the lines $M, M_i$ as fixed components (since otherwise $C$
must intersect $M$ (resp. $M_i$) at $3t+2$ points, while $C$ already
pass through the $3t+3$ points of $M$ (resp. $M_i$), which  is
impossible);
 but the number of these lines is $t+1$ and they are placed in the
 first ruling, which is a contradiction with the type $(t,3t+2)$ of $C$. So such a
$C$ cannot exist, i.e., we have proved that $h^0(Q,\II_Y(t,3t+2))=
0$. Then by semicontinuity one can deduce that
$h^0(Q,\II_{Tr_Q(\X)-\Lambda}(t,3t+2))= 0,$ which is equivalent to
$$h^0(Q,\II_{Tr_Q(\X)}(d, d))= 0.$$
 Finally,
from Remark \ref{rem} we get the conclusion.
\end{proof}
\section{Proof  in $\PP^4$}\label{s4}
In this section we will prove Theorem \ref{th2} for the case $n= 4$,
which for convenient we state again.

\vspace{0.2cm}
$\St^*(4,d)$:
  Let $d\geq 3$ and
$$r= \left \lfloor {{d+4 \choose 4} -  (4d+1)}\over {d+1}\right \rfloor
 ; \ \ \ \
q={d+4 \choose 4} - (4d+1) - r(d+1).
 $$
Let the scheme  $X\subset \PP^4$ be a generic union of $r$ lines
$L_1,\ldots,L_r$, one double line $2L$ and $q$  points
$P_1,\ldots,P_q$ lying on a generic line $M$.  Then $X$ has good
postulation, i.e.,
$$h^1(\II_X(d))= h^0(\II_X(d)) =   {d+4 \choose 4} -(4d+1) - r(d+1)- q= 0.$$
\begin{proof}
Let us begin with the case $d=3$. In this case we have $r= 5$, and
$q= 2$, so $X= 2L+ L_1+\cdots+ L_5+ P_1+ P_2\subset \PP^4$.

Pick a generic hyperplane $H\subset \PP^4$. Now specialize the lines
$L, L_1$ and also the points $P_1, P_2$ into $H$, and denote by $\X$
the specialized scheme.

 On the one hand we obtain $Res_H(\X)= L+L_2+\cdots+L_5\subset
 \PP^4,$
 that is, $Res_H(\X)$ is union of $5$ generic lines. Thus by
 Hartshorne--Hirschowitz theorem, Theorem \ref{HH th}, we immediately
 get
 $$h^0(\II_{Res_H(\X)}(2))= {{2+4}\choose{4}}- 15= 0.$$
 On the other hand we have $$Tr_H(\X)= 2L|_H+ L_1+ S_2+ \cdots+ S_5+
 P_1+ P_2\subset H,$$ where $L_i\cap H= S_i$, $(2\leq i\leq 5).$
 This means that $Tr_H(\X)$ is  generic union of one double line, one
 simple line, and $6$ simple points in $H\cong \PP^3$.
 As we observed in Section \ref{s3}, $\St^*(3,3)$ holds, which
 implies that $\St(3,3)$ holds.
 Now from   $\St(3,3)$, with $s= 1$,  we get that the scheme
   $2L|_H+ L_1\subset H\cong\PP^3$
 has good postulation in degree $3$, i.e.,
$$h^0(\II_{2L|_H+ L_1}(3))= {3+3\choose 3}- 10- 4= 6.$$ Since $P_1,
P_2$ and $S_i$,  $(2\leq i\leq 5),$  are $6$ generic points in $H$,
 we get
 $$h^0(H, \II_{Tr_H(\X)}(3))= 0.$$
Now by Remark \ref{rem} it follows that $h^0(\II_{X}(3))= 0.$

Let us consider the case $d= 4$. Then $r= 10$ and $q= 3.$
 We observe that $$X= 2L+ L_1+\cdots+ L_{10}+ P_1+ P_2+ P_3\subset
 \PP^4,$$
 where $P_1, P_2, P_3$ are generic points lying on the line $M$.

Fix a generic hyperplane $H\subset \PP^4$.  Let $\X$ be the scheme
obtained from $X$ by specializing the lines $L$ and $L_1, L_2, L_3$
into $H$.

We have
 $$Res_H(\X)= X_1+ P_1+ P_2+ P_3\subset \PP^4,$$
 where $X_1= L+ L_4+ \cdots+ L_{10}$.

 Applying Hartshorne--Hirschowitz theorem  to $X_1$, which is union of $8$ generic lines in $\PP^4$, yields
 $$h^0(\II_{X_1}(3))= {3+4\choose 4}- 32= 3;$$
 and also to $X_1+M$, which is union of $9$ generic lines in $\PP^4$, yields
 $$h^0(\II_{X_1+M}(3))= \max\left\{{3+4\choose 4}- 36, 0 \right\}= 0.$$
 Hence by Lemma \ref{line} we get
 $$h^0(\II_{Res_H(\X)}(3))= 0.$$

 Moreover, we have
 $$Tr_H(\X)= 2L|_H+ L_1+L_2+ L_3+ S_4+\cdots+ S_{10}\subset H,$$
 where $L_i\cap H= S_i$, $(4\leq i\leq 10)$.

By setting $X_2= 2L|_H+ L_1+L_2+ L_3$, we see that $X_2$ is generic
union in $H\cong \PP^3$ of one double line and $3$ simple lines, so
by $\St(3,4)$, with $s= 3$, we obtain
$$h^0(\II_{X_2}(4))= {4+3\choose 3}- 13- 15= 7.$$
Notice that the points $S_4,\ldots, S_{10}$ are $7$ generic points
in $H$, therefor
$$h^0(H, \II_{Tr_H(\X)}(4))= 0.$$
This together with $h^0(\II_{Res_H(\X)}(3))= 0$ implies that
$h^0(\II_{\X}(4))= 0$, and from here, by semicontinuity, it follows
the conclusion, which finishes the proof in this case.

Now assume  $d\geq 5$. The rest of the proof will be by induction on
$d$.

We start by letting
 $$r'= \left \lfloor
{{d+3 \choose 4} -  (4(d-1)+1)- q}\over d\right\rfloor;$$
 $$q'={d+3 \choose 4} - (4(d-1)+1) - r'd- q;$$
 $$x= r-r'-2q';$$
 further,  noting that $r', q', x\geq 0$
 (see
 Appendix, Lemma \ref{ap1}).

 Recall that the scheme $$X= 2L+L_1+\cdots+ L_r+ P_1+\cdots+ P_q\subset
 \PP^4,$$
 is  generic union of the double line $2L$, the $r$ simple lines $L_i$,
 and the $q$ points $P_i$ belonging to the generic line $M$.

 Fix a generic hyperplane $H\subset \PP^4$.
 In order to prove that
$X$ has good postulation in degree $d$, we construct a scheme  $\X$
obtained from $X$ by combining specializations and degenerations as
follows:
\begin{itemize}
\item  specialize the first $x$  lines $L_i$  into $H$, and call the resulting set of lines $X_1$;
\item
  degenerate the next $q'$ pairs of  lines $L_i$, so that they become
   $q'$ sundials $$\widehat{C_i}= C_i+ 2N_i|_{H_i};\ \ \ (1\leq i\leq q'),$$
   where  $C_i$ is a
   degenerate conic, $H_i\cong \PP^3$ is a generic linear space containing
   $C_i$   and $2N_i|_{H_i}$ is a double point in $H_i$ with support at the singular
   point of $C_i$, furtheremore,
specialize $\Ci$ in such a way that $C_i\subset H$, but
$2N_i|_{H_i}\not\subset H$,
   and call the
   resulting scheme of sundials  $X_2$, that is $$X_2= \widehat{C_1}+\cdots+ \widehat{C_{q'}},$$ with the
   property that the degenerate conics $C_i$ lie in $H$, but  $2N_i|_{H_i}\not\subset H$;
    \item leave the remaining simple lines $L_i$, which are   $r'= r-x- 2q'$ lines,
    generic not lying in $H$, and call this collection of lines
     $X_3$;
\end{itemize}
then let
 $$\X= 2L+ X_1+ X_2+ X_3+ P_1+\cdots+ P_q\subset \PP^4.$$

 We need to show that $h^0(\II_{\X}(d))= 0,$ which clearly implies that  $h^0(\II_{X}(d))= 0$.
  To do that, by Castelnuovo's inequality, it would be enough to show that
 $h^0(\II_{Res_H(\X)}(d-1))= 0$, and
 $h^0(\II_{Tr_H(\X)}(d))=0.$

First we verify the residual, which is
$$Res_H(\X)= 2L+ Res_H(X_2)+ X_3+ P_1+\cdots+ P_q\subset
\PP^4,$$ where $Res_H(X_2)= N_1+\cdots+ N_{q'}$.
 Recall that the points $P_i$ are $q$ generic lying on the line $M$.
 In order to apply Lemma \ref{line} to get  $h^0(\II_{Res_H(\X)}(d-1))= 0$,
  it suffices to prove  the two following
 equalities
  $$h^0(\II_{2L+ Res_H(X_2) + X_3}(d-1))= q;$$    $$h^0(\II_{2L+ Res_H(X_2) + X_3+ M}(d-1))=0.$$
 By the induction hypothesis we have that $\St^*(4,d-1)$ holds,
 then  $\St(4,d-1)$ holds. Now by applying $\St(4,d-1)$ to
 the scheme  $2L+ X_3$, which consists of one double line and $r'$
 generic lines, we get
 \begin{eqnarray*}
 h^0(\II_{2L+X_3}(d-1))&=&{d+3\choose 4}- (4(d-1)+1)- r'd \\
 &=& q+q'.
 \end{eqnarray*}
Since  $Res_H(X_2)$ consists of $q'$ generic points, it immediately
follows
\begin{equation} \label{1}
h^0(\II_{2L+X_3+ Res_H(X_2)}(d-1))= q.
\end{equation}
 In the same way, by applying
$\St(4,d-1)$ to
 the scheme  $2L+ X_3+ M$, which consists of one double line and $r'+1$
 generic lines, we get
\begin{eqnarray*}
  h^0(\II_{2L+X_3+M}(d-1))&=&\max\left\{{d+3\choose 4}- (4(d-1)+1)- (r'+1)d, 0\right\} \\
 &=& \max \{q+q'-d, 0\},
 \end{eqnarray*}
and therefor
\begin{equation}\label{2}
h^0(\II_{2L+X_3+ M+ Res_H(X_2)}(d-1))= \max\{q-d, 0\}= 0.
\end{equation}
Hence by (\ref{1}) and (\ref{2}) we get
$$h^0(\II_{Res_H(\X)}(d-1))= 0,$$  so we are done
with the residual scheme.

 Now  we treat the trace scheme $Tr_H(\X)$, which we denote  by $T$ for short,  that is
 $$T= Tr_H(\X)= 2R|_H+ X_1+ C_1+\cdots+ C_{q'}+ X'_3\subset H\cong
 \PP^3,$$
 where $L\cap H= R$  thus $2L\cap H= 2R|_H$ is a double point in $H$,
   and  $X'_3= Tr_H(X_3)$ is a generic collection of $r'$
 simple points; moreover, recall that $X_1$ is made by $x$ generic
 lines, where $x= r-r'-2q'$ as defined before.

 We must prove that $h^0(H,\II_{T}(d))= 0$.
 In order to do this,  we wish to construct a specialization of
 $T$,   with the desired vanishing, but this time our specialization will be via
 a smooth quadric surface.
 Since our investigations of  $T$ will be done  in   $H$, as the ambient
 space, so for simplicity of notation we will from now on
 write  $\PP^3$ instead of $H$, as well as,  $2R$ instead of
 $2R|_H$.

 Let $Q\cong \PP^1\times \PP^1$ be a smooth quadric in $\PP^3$.
 Notations and terminology concerning  $Q$
are those of the Section \ref{s3}.
 Let
 $$\hat{r}= \left \lfloor (d+1)^2- (d+2)q' -2x\over {d-1}\right
\rfloor,$$
 $$\hat{q}= (d+1)^2- (d+2)q' -(d-1)\hat{r}  -2x.$$
 Note that $\hat{r}\geq 0$, and so $\hat{q}\geq 0$ (see Appendix,   Lemma \ref{ap3} (i)).

To begin,  let $M_{1,i}, M_{2,i}$ be the two lines which form the
degenerate
 conic $C_i$, $(1\leq i\leq q')$, this means $C_i= M_{1,i}+ M_{2,i}$,
 and
  let $S_1,\ldots, S_{r'}$ be the points
 of $X'_3$.
 Because of the inequalities  $\hat{r}\leq x$   and $\hat{q}\leq
 r'$, (both are proved in Appendix, Lemma \ref{ap3} (ii), (iii)),
   we can specialize $T$
 in the following way:

 Let $\T$ be the scheme obtained from $T$ by specializing the
 degenerate conics $C_i$ and $\hat{r}$   lines $L_1,\ldots, L_{\hat{r}}$ of $X_1$
 in such a way that the lines
 $M_{1,1}\ldots,M_{1,q'}$  and $L_1, \ldots, L_{\hat{r}}$  become
  lines belonging to the first ruling
 of $Q$,  and by
 specializing  $\hat{q}$ points $S_1,\ldots, S_{\hat{q}}$ of
 $X'_3$ onto $Q$
  (the lines $M_{2,1}\ldots,M_{2,q'}$ and the other lines $L_{\hat{r}+1},\ldots, L_x$ of
  $X_1$, also the remaining points $S_{\hat{q}+1},\ldots, S_{r'}$ of $X'_3$ and the point $R$, remain
  generic not lying on $Q$).

 Next, we perform the process of treating the residual and the trace
 of the specialized scheme $\T$, with respect to $Q$, to get $h^0(\II_{\T}(d))= 0$.

We have
\begin{eqnarray*}
Res_Q(\T)&=& 2R+ M_{2,1}+\cdots+ M_{2,q'}+ L_{\hat{r}+1}+\cdots \\
 && +L_x+
 S_{\hat{q}+1}+\cdots+ S_{r'}\subset \PP^3.
 \end{eqnarray*}
Observe that the scheme  $Res_Q(\T)-( S_{\hat{q}+1}+\cdots+ S_{r'})$
is generic union of  one double point and $q'+x-\hat{r}$ lines in
$\PP^3$, and that $d-2\geq 3$, thus by Corollary \ref{CCG} we get
\begin{eqnarray*}
h^0(\II_{Res_Q(\T)- (S_{\hat{q}+1}+\cdots+ S_{r'})}(d-2))&=&
{d-2+3\choose 3}-4- (q'+x-\hat{r})(d-1)\\
&=& r'-\hat{q},
\end{eqnarray*}
(the last equality is proved in Appendix, Lemma \ref{ap3} (v)).
Moreover,  the points $S_{\hat{q}+1},\ldots,S_{r'}$ are $r'-
\hat{q}$ generic points, so we immediately get
$$ h^0(\II_{Res_Q(\T)}(d-2))= 0.$$

Now it remains to consider the trace scheme.
 We first notice that
 $M_{2,i}$ meets $Q$ in the two points which are $(M_{1,i}\cap
 M_{2,i})$   and another point, which we denote by $S'_i$, also
 recall  that $M_{1,i}\subset Q$,
  so we have that  $C_i\cap Q= M_{1,i}+ S'_i$, $(1\leq i\leq q')$.
Similarly, $L_{j}$, $(\hat{r}+1\leq j\leq x)$, meets $Q$ in two
points, then  $Tr_Q(L_{\hat{r}+1}+\cdots+ L_x)$ is a  collection of
$2(x-\hat{r})$ points, which we denote by $T_1$.
 Thus we obtain
 \begin{eqnarray*}
Tr_Q(\T)&= &L_1+ \cdots+L_{\hat{r}}+T_1+ M_{1,1}+ \cdots+ M_{1,q'}\\
&&+ S'_1+\cdots+ S'_{q'}+
 S_1+\cdots+ S_{\hat{q}} \subset Q.
 \end{eqnarray*}
  Since   the lines $M_{1,i}$   and $L_j$,  $(1\leq i\leq q';\  1\leq j\leq \hat{r})$,
   are contained  in the first ruling of
  $Q$, furthermore $d\geq q'+\hat{r}$ (see
Appendix, Lemma \ref{ap3} (iv)),
   then  all of these lines are fixed components for the curves of
$H^0(Q,\II_{Tr_Q(\T)}(d,d))$.
 Set $\Lambda= L_1+\cdots+L_{\hat{r}}+ M_{1,1}+\cdots+ M_{1,q'}\subset Q$.
  Now by removing the fixed
component $\Lambda$, and by using the fact that the points $S'_i$,
$S_k$, $(1\leq i\leq q'; \ 1\leq k\leq \hat{q})$, are generic on
$Q$,  as well as the points of
 $T_1$,
 we conclude that
\begin{eqnarray*}
h^0(Q,\II_{Tr_Q(\T)}(d,d))&=&h^0(Q,\II_{Tr_Q(\T)-\Lambda}(d-q'-\hat{r},d)) \\
&=& (d-q'-\hat{r}+1)(d+1)- (q'+\hat{q}+2x-2\hat{r})\\
&=& (d+1)^2-q'(d+2)- \hat{r}(d-1)-2x-\hat{q}\\
&=& 0.
\end{eqnarray*}
Putting together $ h^0(\II_{Res_Q(\T)}(d-2))= 0$ and
$h^0(Q,\II_{Tr_Q(\T)}(d,d))= 0$ we have $h^0(\II_{\T}(d))= 0$,
therefore, by semicontinuity, we have $h^0(\II_T(d))= 0$. This
completes the proof.
\end{proof}
\section{Proof  in $\PP^n$ for $n\geq 5$}\label{sn}
 We
come to the   general case $n\geq 5$. Now we have the bases for our
inductive approach, we are ready to prove Theorem \ref{th2} in the
general setting.

\vspace{0.2cm}
$\St^*(n,d)$:
 Let  $n,d\in\NN$, and $n\geq 5, d\geq 3$.
 Let
$$r= \left \lfloor {{d+n \choose n} -  (nd+1)}\over {d+1}\right \rfloor
 ; \ \ \ \
q={d+n \choose n} - (nd+1) - r(d+1).
 $$
Let the scheme  $X\subset \PP^n$ be a generic union of $r$ lines
$L_1,\ldots,L_r$, one double line $2L$ and $q$  points
$P_1,\ldots,P_q$ lying on a generic line $M$.  Then $X$ has good
postulation, i.e.,
$$h^1(\II_X(d))= h^0(\II_X(d)) =   {d+n \choose n} -(nd+1) - r(d+1)- q= 0.$$
\begin{proof}
We will prove the theorem by induction on $d$. We proceed to the
general case of $n\geq 5$, noting that $\St^*(3,d)$ and $\St^*(4,d)$
have been proved.

 To begin, let
  $$r'= \left \lfloor
{{d-1+n \choose n} -  (n(d-1)+1)- q}\over d\right\rfloor;$$
 $$q'={d-1+n \choose n} - (n(d-1)+1) - r'd- q;$$
 $$x= r-r'-2q';$$
 we can check that $r', q', x\geq 0$ (see Appendix, Lemma
 \ref{ap1}).

 Let $H\subset \PP^n$ be a generic hyperplane.
 For the purpose of getting $h^0(\II_X(d))= 0$,  we wish to find  a
 scheme $\X$ obtained from $X$ by combining specializations and
 degenerations so that
  the desired vanishing can be achieved. Now  we
  construct the required
 $\X$ in the following way, which is analogous to the one used in $\PP^4$ in the previous section:
\begin{itemize}
\item  specialize the first $x$  lines $L_i$  into $H$, and call the resulting set of lines $X_1$;
\item
  degenerate the next $q'$ pairs of  lines $L_i$, so that they become
   $q'$ sundials $$\widehat{C_i}= C_i+ 2N_i|_{H_i};\ \ \ (1\leq i\leq q'),$$
   where  $C_i$ is a
   degenerate conic, $H_i\cong \PP^3$ is a generic linear space containing
   $C_i$   and $2N_i|_{H_i}$ is a double point in $H_i$ with support at the singular
   point of $C_i$, furtheremore,
specialize $\Ci$ in such a way that $C_i\subset H$, but
$2N_i|_{H_i}\not\subset H$,
   and call the
   resulting scheme of sundials  $X_2$, that is $$X_2= \widehat{C_1}+\cdots+ \widehat{C_{q'}};$$
    \item leave the remaining simple lines $L_i$, which are   $r'= r-x- 2q'$ lines,
    generic not lying in $H$, and call this collection of lines
     $X_3$;
\end{itemize}
then let
 $$\X= 2L+ X_1+ X_2+ X_3+ P_1+\cdots+ P_q\subset \PP^n.$$

 To  show that $h^0(\II_{\X}(d))= 0,$  by Castelnuovo's inequality, our goal will be to show
 that the following vanishings
 $$h^0(\II_{Res_H(\X)}(d-1))= 0; \ \ \ h^0(H,\II_{Tr_H(\X)}(d))=0.$$

With regard to residual, we have
$$Res_H(\X)= 2L+ Res_H(X_2)+ X_3+ P_1+\cdots+ P_q\subset
\PP^n,$$ where $Res_H(X_2)= N_1+\cdots+ N_{q'}$.

By the induction hypothesis we know that $\St^*(n, d-1)$ holds,
which implies that $\St(n,d-1)$ holds (note that $n\geq 5$, then if
$d-1= 2$, by Proposition \ref{d2} we also  have that  $\St(n,2)$
holds). So we can apply $\St(n,d-1)$ to
 the scheme  $2L+ X_3$,   as well as, to the scheme $2L+X_3+M$,
 therefore
 \begin{eqnarray*}
 h^0(\II_{2L+X_3}(d-1))&=&{d-1+n\choose n}- (n(d-1)+1)- r'd \\
 &=& q+q';
 \end{eqnarray*}
\begin{eqnarray*}
  h^0(\II_{2L+X_3+M}(d-1))&=&\max\left\{{d-1+n\choose n}- (n(d-1)+1)- (r'+1)d, 0\right\} \\
 &=& \max \{q+q'-d, 0\}.
 \end{eqnarray*}
 Observe that  $Res_H(X_2)$ is made by $q'$ generic points, so we get
\begin{equation} \label{3}
h^0(\II_{2L+X_3+ Res_H(X_2)}(d-1))= q;
\end{equation}
\begin{equation}\label{4}
h^0(\II_{2L+X_3+ M+ Res_H(X_2)}(d-1))= \max\{q-d, 0\}= 0.
\end{equation}
 Having (\ref{3})and (\ref{4}), moreover,   recalling
that
  the points $P_i$ are $q$ generic points lying on the line $M$,
   we can now   apply Lemma  \ref{line},
 hence
$$h^0(\II_{Res_H(\X)}(d-1))= 0,$$
as we wanted.

Now, we consider  trace scheme $Tr_H(\X)$, which we denote by $T$
for short,
 $$T= Tr_H(\X)= 2R|_H+ X_1+ C_1+\cdots+ C_{q'}+ X'_3\subset H\cong
 \PP^{n-1},$$
 where $L\cap H= R$  thus $2L\cap H= 2R|_H$ is a double point in $H$,
   and  $X'_3= Tr_H(X_3)$ is a generic collection of $r'$
 simple points, which we denote by $S_1,\ldots, S_{r'}$.  In addition, recall that $X_1$ is made by $x$ generic
 lines, where $x= r-r'-2q'$ as defined before.

 For simplicity in the  notation,  we will
 henceforward
 write  $\PP^{n-1}$ instead of $H$, as well as,  $2R$ instead of
 $2R|_H$.

 In order to verify the scheme  $T$, we make a specialization $\T$ of $T$ via a fixed hyperplane as follows:
  we start by setting
 $$\bar{r}= \left \lfloor {{d+n-2 \choose n-2} -  (n-1)- r+r'}\over {d}\right
\rfloor;$$
 $$\bar{q}={d+n-2 \choose n-2} - (n-1)- \rr d- r+ r',$$
 also noting that $\bar{r}, \bar{q}\geq  0$ (Appendix, Lemma
 \ref{ap2} (i)).
Pick a generic hyperplane $H'$ in  $\PP^{n-1}$.
 Now using  the inequalities  $\bar{r}\leq x$   and $\bar{q}\leq
 r'$, (both are proved in Appendix, Lemma \ref{ap2} (ii), (iii)),
   we specialize the  lines $L_1,\ldots, L_{\bar{r}}$ of
   $X_1$, also  the  points $S_1,\ldots, S_{\bar{q}}$ of
   $X'_3$  and the point $R$ into $H'$, and we denote by $\T$ the
   specialized scheme (note that the other lines
   $L_{\bar{r}+1},\ldots, L_x$ of $X_1$, the degenerate conics
   $C_i$, and the other points of $X'_3$ remain generic outside
   $H'$).

Now in order to prove  that $h^0(\PP^{n-1},\II_{T}(d))= 0$, by
semicontinuity, our next goal will be to prove that
$h^0(\PP^{n-1},\II_{\T}(d))= 0$.

 $L_i$ meets $H'$ at one point, $(\bar{r}+1\leq i\leq x)$, so
 $Tr_{H'}(L_{\bar{r}+1}+\cdots+ L_x)$ is a union of $x-\bar{r}$
 points, which we denote by $T_1$. Moreover, $C_j$ meets $H'$ in two
 points, $(1\leq j\leq q')$,  then $Tr_{H'}(C_1+\cdots+ C_{q'})$ is a collection of $2q'$
 points, which we denote by $T_2$.  Accordingly with these notations,   we have
\begin{eqnarray*}
Tr_{H'}(\T)&=& 2R|_{H'}+ L_1+\cdots+ L_{\bar{r}}+ T_1+ T_2 \\
 &&+ S_1+\cdots+ S_{\bar{q}}\subset H'\cong \PP^{n-2}.
\end{eqnarray*}
First we apply Corollary \ref{CCG} to the scheme
$2R|_{H'}+L_1+\cdots+ L_{\bar{r}}$, which implies that
\begin{eqnarray*}
h^0(H', \II_{{2R|_{H'}+L_1+\cdots+ L_{\bar{r}}}}(d))&=&
{d+n-2\choose
n-2} -(n-1) -\bar{r}(d+1) \\
&=& \bar{q}-\bar{r}+r-r' \\
&=& \bar{q} -\bar{r} +2q'+x,
\end{eqnarray*}
next, by the fact that the schematic union $(T_1+ T_2+ S_1+\cdots+
S_{\bar{q}})$ is a generic union of $x-\bar{r}+ 2q'+ \bar{q}$ simple
points,  we immediately get
\begin{equation}\label{5}
h^0(H', \II_{Tr_{H'}(\T)}(d))= 0,
\end{equation}
so we are finished with the trace scheme.

Then we are left  with  the residual of $\T$ with respect to
$H'\cong \PP^{n-2}$, which is
\begin{eqnarray*}
Res_{H'}(\T)&=& R+ C_1+\cdots+ C_{q'}+ L_{\bar{r}+1}+ \cdots+ L_x
\\ && + S_{\bar{q}+1}+\cdots+ S_{r'}\subset \PP^{n-1}.
\end{eqnarray*}
 It is the existence of the degenerate conics $C_i$ that impedes us
 to directly investigate the residual scheme.
 Our method to afford this difficulty is to
 take a  degeneration of $Res_{H'}(\T)$, but  using a  different way to do so.
Indeed, according to the observation of \S \ref{dege} saying that a
sundial can be considered as a degeneration of a degenerate conic
together with a simple point,  we then  degenerate $q'$
 points   $S_{\bar{q}+1},\ldots, S_{\qq+q'}$ together with $q'$ conics
 $C_i$  so that they become $q'$ sundials $\Ci$  having singularity  at
 these points,
    (it is
 possible because  $q'\leq r'-\bar{q}$, Appendix, Lemma \ref{ap2}
 (iii)).  We set $\Gamma= R+ S_{\qq+q'+1}+\cdots+ S_{r'}.$

 Let $Y$ be the scheme obtained from $Res_{H'}(\T)$ by this
 degeneration, more precisely,
 $$Y= \widetilde{C_1}+\cdots+ \widetilde{C_{q'}}+ L_{\bar{r}+1}+ \cdots+
 L_x+ \Gamma\subset \PP^{n-1}.$$
 The scheme $Y-\Gamma$  is generic union of $q'$ sundials
and
 $x-\bar{r}$ lines in $\PP^{n-1}$,  so by  Theorem \ref{sundial} it has good postulation, in other
 words
 \begin{eqnarray*}
h^0(\PP^{n-1},\II_{Y-\Gamma}(d-1))&=&  {d-1+n-1\choose n-1}-
(2q'+x-\bar{r})d\\
&=& r'-q'-\bar{q}+1,
\end{eqnarray*}
the computations to get the last equality can be found in Appendix,
Lemma \ref{ap2} (iv). Since $\Gamma$ is  generic union of
$r'-\bar{q}+1-q'$ points, it then  immediately follows that
$$h^0(\PP^{n-1}, \II_Y(d-1))= 0,$$
and from here, again by semicontinuity, we obtain
$$h^0(\PP^{n-1}, Res_{H'}(\T)(d-1))= 0.$$
This together with (\ref{5}), by Castelnuovo's inequality,  yields
that
 $$h^0(\PP^{n-1}, \II_{\T}(d))= 0,$$
 and this is in fact what we wanted to show, hence the proof is complete.
\end{proof}
\section{On Conjecture \ref{con}}\label{scon}
 Now coming back to our Conjecture \ref{con},
 we will prove it  only in a special case.
\subsection{Some evidence for Conjecture \ref{con}}\label{evi}
The main result of this paper, Theorem \ref{th1},  attracts our
attention to a natural class of objects that is schemes  $X$ of
lines and one fat linear space in projective space. In fact, the
geometry of the exception that we determined in Theorem \ref{th1}
leads us to conjecture that it can be generalized somehow  to the
families  of lines and one fat linear space.
 The basic motivation lies in the
fact that, no defective cases with respect to the linear system
$|\II_X(d)|$ have been discovered, unless $d=m$, where $m$ is the
multiplicity of that linear space.
 So we hope the following conjecture,  which exactly describes the
failure of $X$ to have good postulation.
\begin{conj}[Conjecture \ref{con} of the Introduction] Let  $n,d,r\in\NN$,
and $n\geq r+2\geq 3$. The scheme $X\subset \PP^n$ consisting of
$s\geq 1$ generic lines and one $m$-multiple  linear space $m\Pi$,
$(m\geq 2)$,  with $\Pi\cong \PP^r\subset \PP^n$,  always has good
postulation, except for the cases
$$\left\{n=r+3, m=d, 2\leq s\leq d\right\}.$$
\end{conj}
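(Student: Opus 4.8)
The plan is to run the degeneration-and-Horace machinery that proved Theorem~\ref{th1}, now as a well-founded induction on the pair $(n,m)$ ordered lexicographically with $n$ dominant, the base cases being $m=1$ (Ballico's result \cite[Proposition~1]{Balm}) and the two lowest dimensions $n=r+2,\,r+3$. The engine is again Castelnuovo's inequality (Lemma~\ref{cas}) for a generic hyperplane $H\cong\PP^{n-1}$, but now chosen so that $\Pi\subset H$; this is possible exactly because $n\geq r+2$. Specializing the support of $m\Pi$ into $H$, a local colon-ideal computation gives
$$Res_H(m\Pi)=(m-1)\Pi,\qquad Tr_H(m\Pi)=m\Pi|_H,$$
where the trace is an $m$-multiple $\PP^r$ now sitting inside $\PP^{n-1}$. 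Thus the residual lowers the multiplicity to $m-1$ and the degree to $d-1$ in the same $\PP^n$, feeding the induction in $m$, while the trace keeps $(m,d)$ and drops the ambient dimension to $n-1$, feeding the induction in $n$. This multiplicity-reduction is the genuinely new ingredient compared with the double-line case, where $m=2$ collapsed the residual to the union of simple lines and a single $\Pi$.

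Between these two branches the $s$ simple lines are apportioned exactly as in the proof of Theorem~\ref{th2}. First I would specialize $x$ lines into $H$ (contributing nothing to $Res_H$ and a whole line to $Tr_H$); next degenerate $q'$ pairs of the remaining lines into sundials $\widehat{C_i}=C_i+2N_i$, forcing each degenerate conic $C_i$ into $H$ while keeping the embedded double point $N_i$ off $H$; and leave the rest generic (contributing a line to $Res_H$ and one point to $Tr_H$). With $x$ and $q'$ fixed by the usual floor/remainder count in $d$ (the analogues of Lemmas~\ref{ap1}--\ref{ap3}), the residual becomes a generic union of $(m-1)\Pi$, simple lines and the points $N_i$, whose postulation follows from the inductive hypothesis at $(n,m-1,d-1)$ together with Lemma~\ref{line} to absorb the $q$ collinear points; the trace becomes a generic union in $\PP^{n-1}$ of $m\Pi|_H$, lines, sundials and points, settled by the inductive hypothesis at dimension $n-1$ after reassembling the degenerate conics into sundials by Theorem~\ref{sundial}.

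The exceptional slice $\{n=r+3,\,m=d,\,2\leq s\leq d\}$ I would dispatch first and directly, generalizing the projection argument of Proposition~\ref{d2}. Writing $\Pi=\{x_{r+1}=\cdots=x_n=0\}$, a degree-$d$ form vanishing to order $m=d$ along $\Pi$ is forced to lie in $k[x_{r+1},\ldots,x_n]_d$, so $h^0(\II_X(d))$ is computed by the projection of the $s$ lines from $\Pi$ into the complementary $\PP^{\,n-r-1}=\PP^2$. One then checks that $s$ generic lines in $\PP^2$ impose on $|\OO_{\PP^2}(d)|$ exactly a deficiency of $\binom{s}{2}$ for $2\leq s\leq d$ and no deficiency otherwise; this pins down the defect, recovers the $d=m$ description promised in the introduction, and shows that good postulation holds throughout the rest of the slice.

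The main obstacle, and the reason the statement stays conjectural, is the interplay between the induction and the exceptional locus. The residual shift $(m,d)\mapsto(m-1,d-1)$ preserves the relation $m=d$, while the trace shift replaces $n=r+3$ by $n=r+4$; consequently a scheme that starts outside the exceptional slice can have its trace land inside it, so one must either invoke the projection computation above for that trace or arrange the specialization so that the induced number of lines falls outside the fatal range $2\leq s\leq d$. Tracking this effective line count through the floor/remainder bookkeeping, simultaneously for all $(n,m,d)$, is delicate. On top of this sit the low-dimensional bases $n=r+2$ and $n=r+3$, which the hyperplane induction cannot reach and which will require the ad hoc smooth-quadric specializations of Sections~\ref{s3}--\ref{s4}, now further complicated by the multiple space $m\Pi$ with $m>2$. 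Controlling the messy arithmetic of all these cases at once is the crux that I do not expect to settle in full generality.
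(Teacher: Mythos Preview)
The paper does \emph{not} prove Conjecture~\ref{con}; it is stated as an open conjecture, and the author says explicitly at the start of \S\ref{scon} that she will ``prove it only in a special case'', with the Final remark leaving the rest for future work. The only part of the conjecture that the paper establishes is the slice $d=m$ (the Proposition in \S\ref{evi}), and your projection argument in the third paragraph reproduces that proof exactly, including the identification of the defect as $\binom{s}{2}$ in the exceptional range.

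Your first two paragraphs go beyond the paper: they sketch an inductive attack on the full conjecture by placing $\Pi\subset H$, so that $Res_H(m\Pi)=(m-1)\Pi$ and $Tr_H(m\Pi)=m\Pi|_H$. This is a reasonable plan, but it is \emph{not} the route the paper takes even in the one case it settles ($r=1$, $m=2$): in the main induction of \S\S\ref{s4}--\ref{sn} the line $L$ is kept \emph{outside} the hyperplane, so the trace of $2L$ is only a double point and Corollary~\ref{CCG} applies, while the residual retains $2L$ and the induction runs on $d$ alone. Your scheme instead inducts on $(n,m)$ and carries the fat object on both sides; each approach buys something (yours lowers the multiplicity, the paper's keeps the trace within reach of known results), but they are genuinely different.

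One concrete gap in your sketch that you do not flag: after the first cut, your trace in $\PP^{n-1}$ consists of $m\Pi|_H$ together with lines, degenerate conics and points. You write that this is ``settled by the inductive hypothesis at dimension $n-1$ after reassembling the degenerate conics into sundials by Theorem~\ref{sundial}'', but neither tool applies: the conjecture at $n-1$ allows only simple lines beside $m\Pi$, and Theorem~\ref{sundial} allows no multiple component at all. In the paper's proof for $n\ge5$ this is exactly the step that forces a \emph{second} Castelnuovo cut (the hyperplane $H'$ in \S\ref{sn}) to separate the double point from the conics before the two results can be invoked separately; your scheme would need an analogous secondary device, and its interaction with $m\Pi$ for general $m$ is an additional obstacle on top of the two you already identify.
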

This conjecture would be in perfect analogy with Theorem \ref{th1}.
Note that it is a hard problem to prove it in general case, and
doing so requires the most sophisticated
 investigations with a lot of  technical details, in the setting
 of specialization and degeneration.

  Now  we  show that  the conjecture is true for the special case of $d=m$,
 which is in the center of our attention.
 Before proceeding to state and prove it,
let us introduce the following integer $\al_{(n,d;r,m)}$, for all
integers $n,r,d,m$ with $n> r$ and $d\geq m-1$, which we will use
throughout this section:
$$\al_{(n,d;r,m)}= \sum_{i=0}^{m-1}{r+d-i \choose r}{n+i-r-1 \choose
i}.$$
 Observe that $\al_{(n,d;r,m)}$ is exactly the Hilbert polynomial of
 $m\Pi$ in degree $d$, Lemma \ref{HP}.
 Moreover, when $d=m$ with a straightforward computation, one easily
 sees that:
$$\al_{(n,m; r,m)}= {n+m\choose n}-{n+m-r-1\choose n-r-1}.$$
\begin{prop}
 The scheme  $X\subset \PP^n, n\geq r+2\geq 3,$
consisting of $s\geq 1$ generic lines  and one $m$-multiple linear
space $m\Pi$, $(m\geq 2)$, with $\Pi\cong \PP^r\subset \PP^n$,  has
good postulation in degree $m$, i.e.,
\begin{eqnarray*}
h^0(\II_X(m))&=&\max \left \{ {n+m \choose n}  -\al_{(n,m; r,m)}-
s(d+1), 0 \right
\} \\
&=&\max \left\{ {n+m-r-1\choose n-r-1} -s(d+1), 0 \right\},
\end{eqnarray*}
except for  $\left\{n=r+3, 2\leq s\leq d\right\},$ in which case
the defect is ${s\choose 2}$.
\end{prop}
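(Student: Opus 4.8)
The plan is to reduce the computation to the postulation of $s$ generic lines in a projective space of dimension $N:=n-r-1$, by projecting away from $\Pi$. Choosing coordinates so that $\Pi=V(x_{r+1},\dots,x_n)$, the ideal sheaf of $m\Pi$ is $(x_{r+1},\dots,x_n)^m$, so a degree $m$ form in $H^0(\II_{m\Pi}(m))$ is exactly a form of degree $m$ in the $n-r$ transverse variables $x_{r+1},\dots,x_n$ alone; geometrically these are the degree $m$ cones with vertex $\Pi$, and their space has dimension $\binom{n+m-r-1}{n-r-1}=\binom{n+m}{n}-\al_{(n,m;r,m)}$, in accordance with Lemma \ref{HP} and the identity recorded just before the statement. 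Since every section of $\II_X(m)$ already vanishes on $m\Pi$, the whole space $H^0(\II_X(m))$ consists of such cones.

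I would then set up the linear projection $\pi\colon\PP^n\dashrightarrow\PP^N$ from $\Pi$ onto a complementary $\PP^N$. For a generic line $L$, which is disjoint from $\Pi$ because $1+r<n$, the image $\pi(L)$ is a line and $\pi|_L$ is an isomorphism onto it; moreover any cone $F$ over $\Pi$ satisfies $F=\pi^{-1}(\bar F)$, so $F\supseteq L$ if and only if $\bar F\supseteq\pi(L)$. This yields the key identity $h^0(\PP^n,\II_X(m))=h^0(\PP^N,\II_{X'}(m))$, where $X'=\pi(L_1)+\cdots+\pi(L_s)$. Because projection from $\Pi$ induces a dominant rational map on the Grassmannians of lines, a generic choice of the $L_i$ makes $X'$ a generic union of $s$ lines in $\PP^N$, and by semicontinuity it suffices to treat this configuration.

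The argument now splits according to $N$. If $N\geq 3$, that is $n\geq r+4$, then $X'$ is a generic union of $s$ lines in $\PP^N$ and Theorem \ref{HH th} applies verbatim, giving $h^0(\II_{X'}(m))=\max\{\binom{N+m}{N}-s(m+1),0\}=\max\{\binom{n+m-r-1}{n-r-1}-s(m+1),0\}$, which is the asserted value, so postulation is good. If $N=1$, that is $n=r+2$, the cones are binary forms of degree $m$ in the two transverse variables, and restriction to a single generic line (which maps isomorphically onto $\PP^1$) is an isomorphism of $(m+1)$-dimensional spaces; hence already one line forces the form to vanish, so $h^0=0$ for every $s\geq 1$, again matching the expected value.

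The genuinely exceptional case is $N=2$, i.e. $n=r+3$, where $X'$ is a union of $s$ generic lines in $\PP^2$. Their set-theoretic union is a reduced plane curve of degree $s$ with principal saturated ideal $(\ell_1\cdots\ell_s)$, so $h^0(\II_{X'}(m))=\binom{m-s+2}{2}$ when $s\leq m$ and $0$ otherwise. To extract the defect I would use that the components of $X$ are pairwise disjoint: from the ideal-sheaf sequence and Lemma \ref{HP} one gets $h^0(\II_X(m))-h^1(\II_X(m))=\binom{n+m-r-1}{n-r-1}-s(m+1)=\binom{m+2}{2}-s(m+1)$, and combined with $h^0=\binom{m-s+2}{2}$ an elementary binomial identity gives $h^1(\II_X(m))=\binom{s}{2}$. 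Thus both $h^0$ and $h^1$ are positive exactly when $2\leq s\leq m$, so postulation fails precisely on this range with defect $\binom{s}{2}$, whereas for $s=1$ or $s>m$ one of the two groups vanishes and postulation is good. I expect the main obstacle to be the genericity claim in the projection step — that a generic configuration of lines in $\PP^n$ indeed projects to a generic configuration in $\PP^N$ — together with the bookkeeping identifying the defect in the planar case; everything else reduces to Hartshorne--Hirschowitz and routine computation.
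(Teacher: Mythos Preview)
Your proposal is correct and follows essentially the same route as the paper: identify $H^0(\II_{m\Pi}(m))$ with degree-$m$ cones over $\Pi$, project from $\Pi$ onto a complementary $\PP^{N}$ with $N=n-r-1$ to reduce to $s$ generic lines there, then invoke Hartshorne--Hirschowitz for $N\geq 3$, handle $N=1$ trivially, and compute directly in $\PP^2$ for $N=2$. The only minor difference is that you extract the defect in the planar case via the Euler-characteristic identity $h^0-h^1=\binom{m+2}{2}-s(m+1)$ to obtain $h^1=\binom{s}{2}$, whereas the paper simply compares $h^0=\binom{m-s+2}{2}$ with the expected value; your version is a bit more explicit but amounts to the same computation.
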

\begin{proof}
First notice that, the sections of $\II_X(m)$ correspond to degree
$m$ hypersurfaces in $\PP^n$ which, in order to contain $m\Pi$, have
to be cones whose vertex contains the linear space $\Pi$.

For $n=r+2$, it is easy to see that the linear system $|\II_X(m)|$
is empty, i.e.  $h^0(\II_X(m))=0$, that is what was expected.

For $n\geq r+3$, let us  consider the projection $X'$ of $X$ from
$\Pi$ into a generic linear subspace $\PP^{n-r-1}\subset \PP^n$.
Then we have that  the scheme $X'$ consists of  $s$ generic lines in
$\PP^{n-r-1}$, also that the following equality
 $$h^0(\PP^n,\II_X(m))= h^0(\PP^{n-r-1},\II_{X'}(m)).$$
In case $n>r+3$, we have  $n-r-1\geq 3$, therefor from
Hartshorne--Hirschowitz theorem \ref{HH th} it follows that
$$h^0(\PP^{n-r-1},\II_{X'}(m))= \max \left\{{m+n-r-1 \choose n-r-1} -s(d+1),
0\right\},$$ which is the expected value for  $h^0(\PP^n,
\II_X(m))$, so we are done in this case.

In case $n=r+3$, $X'$ is a generic union of $s$ lines in $\PP^2$.
  Hence, if $s>m$ we obviously have  $h^0(\II_{X'}(m))=
  0$, as expected.
 If $s\leq m$ we have $h^0(\II_{X'}(m))= {m-s+2 \choose 2},$
 on the other hand the expected value for
 $h^0(\II_{X}(m))$ is  $$\max\left\{{m+2\choose 2}- s(m+1), 0\right\},$$
 which we denote by $\exp h^0(\II_X(m))$.
 Thus for $s=1$,  we get that $h^0(\II_{X}(m))= {m+1\choose 2}$,  as expected; but  for
 $2\leq s\leq m$, we get that $h^0(\II_{X}(m))\neq \exp h^0(\II_X(m))$ and the defect is
$$h^0(\II_X(m))- \exp h^0(\II_X(m))= {s\choose 2},$$
which finishes the proof.
\end{proof}
\subsection{Final remark}
 A complete proof for  Conjecture \ref{con}  will be a substantial
 effort, however, we  believe that a method analogous to that
  presented  in \S \ref{st},  can be successfully applied for studying
  postulation problem for a generic scheme of lines and one fat linear space in $\PP^n$, and we plan
 to study this problem in the future.
 Indeed, if one can  provide  a proof for Conjecture \ref{con}
  for a generic union of lines and one fat line  in $\PP^n$, then  even interestingly enough, one may hope to generalize this
 approach to the  cases of lines and one fat linear space,  that seems to
 be quite
 difficult.
Actually,  compared with the proof we gave in this paper, in the
case of lines and one fat linear space we are forced
 to divide the proof in much more steps.
 While an  argument analogous  to Theorem \ref{th1}  works in a more
 complicated way for the higher dimensional ambient projective spaces, investigations in two initial ambient
 spaces cause troubles, and this is why we leave it for the future.
\section{Appendix: Calculations}\label{app}
\begin{lem}\label{ap1}
Let $n\geq 5, d\geq 3$ or  $n= 4, d\geq 5$.
 Let
$$r= \left \lfloor {{d+n \choose n} -  (nd+1)}\over {d+1}\right
\rfloor,\ \ q={d+n \choose n} - (nd+1) - r(d+1);$$
 $$r'= \left \lfloor
{{d-1+n \choose n} -  (n(d-1)+1)- q}\over d\right\rfloor,$$
 $$q'={d-1+n \choose n} - (n(d-1)+1) - r'd- q.$$
 Then
 \begin{itemize}
\item[(i)] $r'\geq 0$;
\item[(ii)] $r-r'-2q'\geq 0$.
\end{itemize}
\end{lem}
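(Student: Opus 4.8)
The plan is to treat this as a purely arithmetic comparison between the degree-$d$ count defining $(r,q)$ and the degree-$(d-1)$ count defining $(r',q')$, linked by a single Pascal-type identity. Write $C=\binom{d+n-1}{n-1}$, $B=\binom{d+n-1}{n}-(n(d-1)+1)$ and $A=\binom{d+n}{n}-(nd+1)$, so that by definition $A=r(d+1)+q$ with $0\le q\le d$, and $B-q=r'd+q'$ with $0\le q'\le d-1$. Pascal's rule gives $A-B=\binom{d+n-1}{n-1}-n=C-n$, since $(nd+1)-(n(d-1)+1)=n$, and therefore the fundamental identity
\[
r(d+1)=r'd+q'+C-n .
\]
I will also use $\binom{d+n-1}{n}=\tfrac{d}{n}C$, i.e. $B=\tfrac{d}{n}C-n(d-1)-1$.

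For (i), the statement $r'\ge 0$ is equivalent to $B-q\ge 0$, and since $q\le d$ it suffices to prove $B\ge d$. This reduces to $\tfrac{d}{n}C\ge (n+1)d-(n-1)$, that is $C\ge n(n+1)-\tfrac{n(n-1)}{d}$, which follows from the clean bound $C=\binom{d+n-1}{n-1}\ge\binom{n+2}{3}=\tfrac{(n+2)(n+1)n}{6}\ge n(n+1)$, valid for $d\ge 3$ and $n\ge 4$. So (i) is immediate.

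For (ii), the identity above shows that $r-r'-2q'=\bigl(C-n-r'-q'(2d+1)\bigr)/(d+1)$, so $r-r'-2q'\ge 0$ is equivalent to $C-n-r'-q'(2d+1)\ge 0$. Eliminating $r'$ by $r'd=B-q-q'$ and substituting $B=\tfrac{d}{n}C-n(d-1)-1$, this rearranges to $\tfrac{(n-1)d}{n}C+q+1-n\ge q'(2d^2+d-1)$. Using only $q\ge 0$ and the remainder bound $q'\le d-1$, it then suffices to establish the closed-form inequality $(\dagger)$:
\[
\frac{(n-1)d}{n}\binom{d+n-1}{n-1}+1-n\ \ge\ (d-1)(2d^2+d-1).
\]
The left side is a polynomial of degree $n\ge 4$ in $d$ while the right side is cubic, so $(\dagger)$ holds for all large $d$; the only real work is controlling the small cases. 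I would prove $(\dagger)$ for $n\ge 5$ and all $d\ge 3$ by an induction on $d$ with base $d=3$, comparing increments and invoking $\binom{d+n-1}{n-1}\ge\binom{d+n-1}{3}$ to dominate the cubic (a short monotonicity check in $n$ lets one take $n=5$ as the base case).

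The main obstacle is the case $n=4$. Here the remainder bound $q'\le d-1$ is genuinely too lossy: $(\dagger)$ fails at $d=5$ and $d=6$ yet holds for all $d\ge 7$ (base $d=7$, then the same induction on $d$). The two exceptional values must therefore be handled by exact arithmetic rather than by $(\dagger)$; computing the actual remainders one finds $q'=0$ in both, so that $r-r'-2q'=r-r'\ge 0$ drops out of the already-established inequality $B\ge q$ from part (i). This localization of the difficulty to $n=4,\ d\in\{5,6\}$ is precisely why the hypotheses split off the range $n=4,\ d\ge 5$ (and why the base degrees $d=3,4$ of $\St^*(4,d)$ are treated separately in Section \ref{s4}).
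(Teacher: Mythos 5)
Your argument is correct and follows the same overall strategy as the paper --- clear denominators, use the remainder bounds $0\le q\le d$, $0\le q'\le d-1$ together with the floor inequality, reduce to a closed-form polynomial inequality in $(n,d)$, and finish the stubborn $n=4$ range by exact arithmetic --- but your algebraic decomposition in (ii) is genuinely different and sharper. The paper eliminates $q'$ exactly and bounds $r'$ from below by $\frac{1}{d}\bigl(\binom{d-1+n}{n}-(n(d-1)+1)-q\bigr)-1$, arriving at $(n-2)\binom{d-1+n}{n}\ge d(d+1)(2d-1)$, which for $n=4$ only holds for $d\ge 18$ and forces a table of thirteen hand-checked values $5\le d\le 17$. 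You instead eliminate $r'$ exactly via $r'd=B-q-q'$ and bound $q'\le d-1$, which yields $(\dagger)$, i.e. $(n-1)\binom{d+n-1}{n}+1-n\ge(d-1)(d+1)(2d-1)$; I verified that for $n=4$ this fails only at $d=5,6$ and holds for all $d\ge 7$, so your finite check shrinks from thirteen cases to two (where indeed $q'=0$ and $r-r'$ equals $7$ and $9$, matching the paper's table). Your part (i), with the uniform bound $\binom{d+n-1}{n-1}\ge\binom{n+2}{3}\ge n(n+1)$, is also cleaner than the paper's case split. Two caveats: the induction on $d$ establishing $(\dagger)$ for $n\ge5$ is only sketched (though the analogous estimate in the paper shows it is routine), and your closing justification that $r-r'\ge0$ ``drops out of $B\ge q$'' is not a valid deduction --- part (i) gives $r'\ge0$, not $r\ge r'$; since you are already computing $r,r',q,q'$ exactly for $d=5,6$, you should simply read off $r-r'=7$ and $9$ there rather than appeal to (i).
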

\begin{proof}
(i) Since $q\leq d$, we have
$${d-1+n \choose n} -  (n(d-1)+1)- q\geq {d-1+n \choose n}-
(n(d-1)+1)- d,$$
 so in order to show that  $r'\geq 0$ it is enough to show that
 \begin{equation}\label{6}
{d-1+n\choose n}- (n(d-1)+1)\geq d.
 \end{equation}
 First consider the case $n=4$ and $d\geq 5$, then  we obviously have
 $${d-1+n\choose n}- (n(d-1)+1)- d= {d+3\choose 4} -5d+3\geq 0.$$

 Now consider  the case $n\geq 5$ and $d\geq 3$. Notice that the  function
 ${d-1+n \choose n}- (n(d-1)+1)$  is an increasing function in
$n$, hence to get the conclusion it suffices to prove the inequality
(\ref{6})
 only for $n=5$.
 Now by letting $n=5$, we easily see that
\begin{eqnarray*}
{d-1+n \choose n} -  (n(d-1)+1)- d&=&{d-1+5 \choose 5}- (5(d-1)+1) -d\\
&= & {d+4 \choose 5} -6d+4\geq 0,
\end{eqnarray*}
the last inequality is surely holds for $d\geq 3$.

\vspace{0.2cm}
(ii) We have to prove that
$$ \left \lfloor {{d+n \choose n} -  (nd+1)}\over {d+1}\right
\rfloor\geq r'+2q'.$$
 Since $r'$ and $q'$ are integers, the inequality above is
 equivalent to the following
 $$ {{d+n \choose n} -  (nd+1)\over {d+1}}\geq r'+2q',$$
 hence, it is enough to prove that
 $${d+n \choose n} - (nd+1) -(d+1)r'- 2(d+1)q'\geq 0.$$
 We have
 \begin{eqnarray*}
& &{d+n \choose n} - (nd+1) -(d+1)r'- 2(d+1)q' \\
&=& {d+n \choose n}-2(d+1){d-1+n\choose n}+2(d+1)(n(d-1)+1)-(nd+1) \\
& &+2(d+1)q +(d+1)(2d-1)r'\\
&\geq & {d+n \choose n}-2(d+1){d-1+n\choose n}+2(d+1)(n(d-1)+1)-(nd+1) \\
& &+2(d+1)q +(d+1)(2d-1)\left\{{{{d-1+n\choose n}-
(n(d-1)+1)-q}\over {d}}-1\right\}\\
&=&{\frac{1}{d}}\left\{d{d+n\choose n}-(d+1){d-1+n\choose
n}-(n-1)+q(d+1)-d(d+1)(2d-1)\right\}\\
&=& {\frac{1}{d}}\left\{(n-1){d-1+n\choose n}-(n-1)+q(d+1)-d(d+1)(2d-1)\right\}\\
&\geq &{\frac{1}{d}}\left\{(n-1){d-1+n\choose n}-{d-1+n\choose
n}-d(d+1)(2d-1)\right\}\\
&=& {\frac{1}{d}}\left\{(n-2){d-1+n\choose n}-d(d+1)(2d-1)\right\}.
\end{eqnarray*}
For $n\geq 5$, we get
\begin{eqnarray*}
 &&(n-2){d-1+n\choose n}-d(d+1)(2d-1)\\
 && \geq  3{d+4 \choose 5}-
 d(d+1)(2d-1)\\
 &&= \frac{1}{40}d(d+1)\left\{(d+2)(d+3)(d+4)-80d+40\right\}\geq 0,
 \end{eqnarray*}
 it is quite immediate to check that the last inequality holds for
 all $d\geq 3$, hence we are done in the case $n\geq 5$.

For $n=4$, we get
\begin{eqnarray*}
 &&(n-2){d-1+n\choose n}-d(d+1)(2d-1)\\
 &&= \frac{1}{12}d(d+1)\left\{(d+2)(d+3)-24d+12\right\}\\
 &&=
 \frac{1}{12}d(d+1)(d-1)(d-18),
 \end{eqnarray*}
 so this is positive for all $d\geq 18$. This is what we wanted to show for
 $n=4$ and $d\geq 18$,   then we are left with
 $5\leq d\leq 17$.
 Now by a direct computation we get the desired inequality
 $r-r'-2q'\geq0$ in the case $n=4$ with $5\leq d\leq 17$ as follows:

$$\begin{tabular}{c|c|c|c|c|c|c|c|c|c|c|c|c|c}
  $d$ & 5&6&7&8&9&10&11&12&13&14&15&16&17\\
  \hline
  $r-r'-2q'$&7&9&2&10&9&10&17&9&30&34&17&35&32\\
\end{tabular}$$

\end{proof}
\begin{lem}\label{ap2}
Let $n\geq 5$ and $d\geq 3$. With the  notations as in Lemma
\ref{ap1}, let
$$\bar{r}= \left \lfloor {{d+n-2 \choose n-2} -  (n-1)- r+r'}\over {d}\right
\rfloor,$$
 $$\bar{q}={d+n-2 \choose n-2} - (n-1)- \rr d - r+ r'.$$ Then
 \begin{itemize}
\item[(i)] $\rr\geq 0$;
\item[(ii)] $\rr\leq r-r'-2q'$;
\item[(iii)] $r'\geq q'+\qq$;
\item[(iv)] ${d+n-2\choose n-1}-
(r-r'-\bar{r})d= r'-q'-\bar{q}+1.$
\end{itemize}
\end{lem}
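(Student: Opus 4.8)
The plan is to dispatch part (iv) first as a pure identity, and then use the algebraic relations it exposes to reduce the three inequalities (i)--(iii) to elementary polynomial estimates in $d$ of exactly the type already handled in Lemma~\ref{ap1}. The starting point is a master relation obtained by unwinding the definitions: from $r(d+1)={d+n \choose n}-(nd+1)-q$ and $r'd={d-1+n \choose n}-(n(d-1)+1)-q-q'$, subtraction together with Pascal's rule ${d+n \choose n}-{d-1+n \choose n}={d+n-1 \choose n-1}$ yields
$$r(d+1)-r'd={d+n-1 \choose n-1}-n+q',$$
equivalently $d(r-r')={d+n-1 \choose n-1}-n+q'-r$. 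This single identity is the engine for everything below. For (iv), I would substitute $\rr d={d+n-2\choose n-2}-(n-1)-\qq-r+r'$ (from the definition of $\qq$) into $(r-r'-\rr)d$, replace $(r-r')d$ by the master relation, and collapse ${d+n-1\choose n-1}-{d+n-2\choose n-2}={d+n-2\choose n-1}$ by Pascal; everything cancels and the stated equality drops out with no inequality involved. This is the bookkeeping step.

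For (i) and (ii) I would exploit that $\rr$ is a floor, using $\lfloor x\rfloor\le x$ for the upper bound and $\lfloor x\rfloor> x-1$ for the lower bound, together with the ranges $0\le q\le d$ and $0\le q'\le d-1$. Part (i), $\rr\ge 0$, is equivalent to ${d+n-2\choose n-2}-(n-1)\ge r-r'$; the master relation shows $r-r'$ is, to leading order, $\tfrac{n-1}{n!}d^{n-2}$, whereas ${d+n-2\choose n-2}$ has leading term $\tfrac{1}{(n-2)!}d^{n-2}=\tfrac{n(n-1)}{n!}d^{n-2}$, so the left side dominates by a factor of $n$ and the inequality holds for large $d$. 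Part (ii), $\rr\le r-r'-2q'$, reduces after clearing the floor and the denominator $d$ to ${d+n-2\choose n-2}-(n-1)\le(d+1)(r-r')-2dq'$, whose right-hand side has degree $n-1\ge 4$ against the left's degree $n-2$; again leading-order domination settles large $d$.

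For (iii) I would feed (iv) back in. Since (iv) gives $r'-q'-\qq={d+n-2\choose n-1}-(r-r'-\rr)d-1$, the claim $r'\ge q'+\qq$ is exactly ${d+n-2\choose n-1}-(r-r'-\rr)d\ge 1$. Applying the floor lower bound $\rr d>{d+n-2\choose n-2}-(n-1)-r+r'-d$ and the master relation, this collapses to the clean sufficient condition $r'\ge q'+d-1$, i.e. $r'-q'\ge d-1$, which I would check directly from $r'd={d-1+n\choose n}-(n(d-1)+1)-q-q'$ by the same degree comparison ($\deg_d r'd=n\ge 5$ versus the quadratic $d(d-1)$).

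The main obstacle is not the leading-order analysis but the finitely many small cases. The error terms carried by the remainders $q,q'$ and by the floor roundings are of size $O(d)$ and $O(d^2)$, which are \emph{not} negligible at the bottom of the range; indeed $\{n=5,d=3\}$ is already a boundary case in which $r'=q'+d-1$ holds with equality. Consequently, exactly as in Lemma~\ref{ap1}, the honest work is to pin down, for each of (i)--(iii), an explicit threshold $d_0(n)$ beyond which the reduced polynomial inequality is manifestly positive (using that each such expression is increasing in $n$, so that it suffices to treat $n=5$), and then to verify the residual finite list of pairs $(n,d)$ by direct computation, producing a short table as was done there.
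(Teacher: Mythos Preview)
Your plan is essentially the paper's: for (i)--(iii) reduce via the floor bounds and the ranges $0\le q\le d$, $0\le q'\le d-1$ to explicit polynomial inequalities in $d$, dispatch these by leading-order comparison (using monotonicity in $n$ to reduce to the smallest relevant $n$), and tabulate the finitely many residual small cases; (iv) is a pure identity. Your master relation $d(r-r')={d+n-1\choose n-1}-n+q'-r$ is a tidy organizing device the paper does not isolate explicitly, and for (iii) you take a slight detour through (iv) and the floor lower bound to reach the sufficient condition $r'\ge q'+d-1$, whereas the paper simply uses $q',\qq\le d-1$ directly to reduce to $r'\ge 2d-2$; both routes land at essentially the same checkpoint.
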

\begin{proof}
(i) We will verify that
$${d+n-2 \choose n-2} -  (n-1)- r+r'\geq 0.$$
Since
$$r\leq {{{d+n \choose n} - (nd+1)}\over {d+1}}; \ \
 r'\geq {{{d-1+n \choose n} - (n(d-1)+1)-q}\over {d}}-1,$$
 we have
\begin{eqnarray*}
r-r'&\leq & \frac{1}{d+1}\left\{{d+n\choose n}-(nd+1)\right\}\\
&& -\frac{1}{d}\left\{{d-1+n\choose n}-(n(d-1)+1)-q\right\}+1\\
&=&  \frac{1}{d(d+1)}\left\{d{d+n\choose n}-(d+1){d-1+n\choose
n}-(n-1)+q(d+1)\right\}+ 1 \\
&=&  \frac{1}{d(d+1)}\left\{(n-1){d-1+n\choose
n}-(n-1)+q(d+1)+d(d+1)\right\}.  \\
\end{eqnarray*}
Then  we get
\begin{eqnarray*}
&& {d+n-2 \choose n-2} -  (n-1)- r+r'\\
&\geq & {d+n-2 \choose n-2} -  (n-1)\\
&& -  \frac{1}{d(d+1)}\left\{(n-1){d-1+n\choose
n}-(n-1)+2d(d+1)\right\}\\
&=& \frac{\mathbf{A}}{d(d+1)},
\end{eqnarray*}
where
$$\mathbf{A}= d(d+1){d+n-2\choose n-2}-(n-1){d-1+n\choose n}
-(n-1)(d^2+d-1)-2d(d+1).$$

A straightforward computation, yields
$$\mathbf{A}={1 \over n} {d+n-2\choose n-2}(nd^2-d^2+d)-(n-1)(d^2+d-1)-2d(d+1). $$
 Since  $n\geq 5, d\geq 3$ we have  $n(n-1)\leq {d+n-2\choose n-2}$,
 and from here it follows
\begin{eqnarray*}
\mathbf{A}&\geq & (n-1)(nd^2-d^2+d-(d^2+d-1))-2d(d+1)\\
&=& d^2(n^2-3n)-2d+n-1\geq 0,
\end{eqnarray*}
 which completes the proof.

\vspace{0.2cm}
(ii) In order to prove that $\rr\leq r-r'-2q'$, it suffices to prove
that
$$ {{{d+n-2 \choose n-2} -  (n-1)- r+r'}\over {d}}\leq r-r'-2q'+1,$$
which is equivalent to the following
$$r(d+1)-r'(d+1)-{d+n-2\choose n-2}+(n-1)-2q'd+d\geq 0.$$
From the definitions of   $r$ and $r'$, moreover   the inequality
$q'\leq d-1$, we get
\begin{eqnarray*}
&& r(d+1)-r'(d+1)-{d+n-2\choose n-2}+(n-1)-2q'd+d \\
&\geq &  {d+n\choose n}-(nd+1) -(d+1)
-\frac{d+1}{d}\left\{{d-1+n\choose
n}-(n(d-1)+1)-q\right\}\\
& & -{d+n-2\choose n-2}+(n-1)-2d^2+3d,
\end{eqnarray*}
which, by an easy computation, is equal to
$$\frac{1}{d}\left\{(n-1){n+d-2\choose n}
+n(d-1)-2d^3+2d^2-2d+1+q(d+1)\right\}.$$
 Now  we observe that
\begin{eqnarray*}
&&(n-1){n+d-2\choose n} +n(d-1)-2d^3+2d^2-2d+1+q(d+1) \\
&\geq & (n-1){n+d-2\choose n} +n(d-1)-2d^3+2d^2-2d+1,
\end{eqnarray*}
hence,  we will be done if we prove that
\begin{equation}\label{8}
(n-1){n+d-2\choose n} +n(d-1)-2d^3+2d^2-2d+1\geq 0.
\end{equation}

 For $n\geq 6$,  we have
$$(n-1){n+d-2\choose n} +n(d-1)-2d^3+2d^2-2d+1$$
$$\geq 5{d+4\choose 6}-(2d^3-2d^2-4d+5),$$
which, for  $d\geq 3$, is positive, as we wanted.

For $n=5$, the inequality (\ref{8}) becomes:
$$ 4{d+3\choose 5}-(2d^3-2d^2-3d+4)\geq 0,$$
which is true for $d\geq 5$, so we are left with $d=3, 4$ in the
case of $n=5$. But  direct computations show that also these cases
satisfy the required inequality $\rr\leq r-r'-2q'$. More precisely,
if $d=3$, we have $\rr= 3$ and $r-r'-2q'=5$; if $d=4$, we have $\rr=
5$ and $r-r'-2q'= 11$.

\vspace{0.2cm}
(iii) We want to prove that $q'+\qq\leq r'$. By the inequalities
$q', \qq\leq d-1$, which implies  $q'+\qq\leq 2d-2$, and also by the
following one
 $$r'\geq {{{d-1+n
\choose n} - (n(d-1)+1)-q}\over {d}}-1,$$ it is enough to prove that
$${{{d-1+n \choose n} - (n(d-1)+1)-q}\over {d}}-1\geq 2d-2,$$
i.e.
$${d-1+n \choose n} - (n(d-1)+1)-q-2d^2+d\geq 0.$$
Using $q\leq d$, we have to show that
$${d-1+n \choose n} - (n(d-1)+1)-2d^2\geq 0,$$
or, equivalently,
\begin{equation}\label{9}
  {d-1+n \choose n}
- n(d-1)\geq 2d^2+1.
\end{equation}
 Notice that the  function
 ${d-1+n \choose n} -n(d-1)$  is an increasing function in
$n$. For $n=5$, the inequality  (\ref{9}) becomes
$$ {d+4\choose 5}\geq 2d^2+5d-4,$$
 which holds for  $d\geq 4$. So it remains to check $q'+\qq\leq r'$
 in the case of  $d=3$ with $n=5$. In this case we can directly compute that
 $r'= 3$, $q'=1$, $\qq= 0$. Hence the case $n=5$ is done.

 For $n=6$, the inequality  (\ref{9}) becomes
$$ {d+5\choose 6}\geq 2d^2+6d-5,$$
which holds for $d\geq 4$. So we are left with $d=3$. A direct
computation in the case of $d=3$ with $n=6$ yields that  $r'=4$,
$q'=2$, $\qq= 0$. So we are done for $n=6$.

Finally, for $n=7$, the inequality  (\ref{9}) becomes
$$ {d+6\choose 7}\geq 2d^2+7d-6,$$
which is true for any $d\geq 3$.
 Now,  since ${d-1+n \choose n} -n(d-1)$
is an increasing function in $n$, we have proved (\ref{9}) for all
$n\geq 7$ and $d\geq 3$. That finishes the proof of part (iii).

\vspace{0.2cm}
(iv) We must check that
 $${d+n-2\choose n-1}-
(r-r'-\bar{r})d= r'-q'-\bar{q}+1,$$
 that is
 \begin{equation}\label{10}
{d+n-2\choose n-1}+(r'd+q')+(\rr d+\qq-r')= rd+1.
\end{equation}
From the definitions of $q', \qq$  we have
 $$r'd+q'= {d-1+n\choose n}-n(d-1)-1-q;$$
 $$\rr d+\qq -r'= {d+n-2\choose n-2}-(n-1)-r.$$
Now using these equalities and an easy computation yields
$${d+n-2\choose n-1}+(r'd+q')+(\rr d+\qq-r')$$
$$={d+n\choose n}-nd-r-q,$$
which by $q= {d+n\choose n}-(nd+1)-r(d+1)$ is equal to $(rd+1)$,
that is what we wanted (\ref{10}).
\end{proof}
\begin{lem}\label{ap3}
 Let $d\geq 5$. Let $r, r', q, q'$ be as in Lemma \ref{ap1} in the
 case $n=4$.
 Let
$$\hat{r}= \left \lfloor (d+1)^2- (d+2)q' -2(r-r'-2q')\over {d-1}\right
\rfloor,$$
 $$\hat{q}= (d+1)^2- (d+2)q' -(d-1)\hat{r} -2(r-r'-2q').$$ Then
\begin{itemize}
\item[(i)] $\hat{r}\geq 0$;
\item[(ii)] $\hat{r}\leq r-r'-2q'$;
\item[(iii)] $\hat{q}\leq r'$;
\item[(iv)] $q'+ \hat{r}\leq d;$
\item[(v)] $r'-\hat{q}= {d+1\choose
3}-4- (d-1)(r-r'-\hat{r}-q').$
\end{itemize}
\end{lem}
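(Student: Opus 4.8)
The plan is to treat part (v) as an exact identity and parts (i)--(iv) as floor inequalities, in the spirit of Lemmas \ref{ap1} and \ref{ap2}: clear each floor using $0\le q\le d$ and $0\le q'\le d-1$, reduce the claim to a statement polynomial in $d$, prove it for large $d$, and settle the finitely many small $d$ by a direct table. The computational backbone, which I would record first since every part uses it, is the exact value of $r-r'$. Writing $r(d+1)=\binom{d+4}{4}-(4d+1)-q$ and $r'd=\binom{d+3}{4}-(4d-3)-q-q'$, and combining Pascal's rule $\binom{d+4}{4}=\binom{d+3}{4}+\binom{d+3}{3}$ with $d\binom{d+3}{3}=4\binom{d+3}{4}$, the difference collapses to
\[
r-r'=\frac{3\binom{d+3}{4}-3+q+(d+1)q'}{d(d+1)}.
\]

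For part (v) I would substitute the definition $\hat q=(d+1)^2-(d+2)q'-(d-1)\hat r-2(r-r'-2q')$ into $r'-\hat q$; the $(d-1)\hat r$ terms cancel against the right-hand side, and after inserting the expressions above for $(d+1)r$ and $r'd$ all dependence on $q,q',r,r'$ disappears, leaving the assertion $\binom{d+3}{3}-\binom{d+1}{3}=(d+1)^2$, which is immediate from Pascal. This part is purely formal. Part (iii) should be the easiest: $\hat q$ is a remainder modulo $d-1$, so $\hat q\le d-2$, whereas $r'\sim\binom{d+3}{4}/d$ grows cubically, so $r'\ge d-2\ge\hat q$ except for a handful of small $d$, which I would check by hand. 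Parts (ii) and (iv) I would reduce, via the sharp equivalence $\lfloor N/(d-1)\rfloor\le B\iff N<(d-1)(B+1)$ applied to $N:=(d+1)^2-(d+2)q'-2(r-r'-2q')$, to explicit inequalities among $d,q,q'$; since both are tight for the smallest $d$ (for instance $q'+\hat r=d$ at $d=5,6$), after proving them for large $d$ I would verify the boundary cases from a table as in Lemma \ref{ap1}.

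The hard part is (i), equivalently $N\ge 0$. Substituting the boxed value of $r-r'$ puts this in the clean form
\[
N=\frac{3d^2+3d-2}{4}-\frac{(d^2-2d+2)\,q'}{d}+\frac{6-2q}{d(d+1)},
\]
whose first two terms are both of size $d^2$. Consequently the crude estimate $q'\le d-1$ used in Lemmas \ref{ap1}--\ref{ap2} is \emph{not} enough here: with $q'=d-1$ the right-hand side becomes negative for large $d$. The genuine input is that $q'$ cannot be that large. Because $r-r'$ is an integer, the formula for $r-r'$ forces $\tfrac{(d+3)(d+2)}{8}+\tfrac{q'}{d}$ to lie within $O(1/d)$ of an integer, which pins $q'/d$ to $1$ minus the fractional part of $\tfrac{(d+3)(d+2)}{8}$. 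This fractional part depends only on $d\bmod 8$, and equals $0$ exactly when $d\equiv 5,6\pmod 8$ (so that $q'=O(1)$) and is at least $1/4$ otherwise. Hence $q'\le\tfrac34 d+O(1)$ in every residue class, and substituting this into the displayed expression gives $N>0$ with a margin of order $d$ (in the extremal class $d\equiv 4,7\pmod 8$ one finds $N\approx\tfrac{9d-8}{4}$). I expect this residue-class control of the remainder $q'$ to be the only delicate point; once it is in hand, (i) follows, and the remaining inequalities are routine bookkeeping together with a short table for the initial values of $d$.
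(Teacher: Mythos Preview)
Your treatment of parts (ii)--(v) follows the paper's: clear each floor to a polynomial inequality in $d$ using $0\le q\le d$ and $0\le q'\le d-1$, prove it for $d$ large, and tabulate the small cases; part (v) is an exact identity in both approaches.

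Part (i) is where you diverge, and your route is the sounder one. The paper eliminates $q'$ in favour of $r'$ via $q'=\binom{d+3}{4}-(4d-3)-q-r'd$ and then asserts
\[
(d-2)q'+2(r-r')\ \le\ \frac{\mathbf{A}}{d(d+1)},\qquad \mathbf{A}=6\binom{d+3}{4}-2(d^2+d+3)+2q(d+1),
\]
from which $N\ge\tfrac{3d^2+3d-2}{4}>0$ would follow. But that inequality points the wrong way: the exact value is $d(d+1)\bigl[(d-2)q'+2(r-r')\bigr]=6\binom{d+3}{4}-6+2q+(d+1)(d^2-2d+2)q'$, which exceeds $\mathbf{A}$ by $2d(d+1-q)+(d+1)(d^2-2d+2)q'\ge 0$; e.g.\ at $d=7$ the left side is $49$ while $\mathbf{A}/56\approx 22$. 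So your diagnosis that the crude bound $q'\le d-1$ is genuinely too weak is on target, and the paper's own argument for (i) has a gap of exactly this kind. Your remedy---using the integrality of $r-r'$ to force $q'/d$ near $1-\{(d+3)(d+2)/8\}$, hence $q'\le\tfrac34 d+O(1)$ in every residue class mod $8$, with equality in the classes $d\equiv 4,7$---closes it and yields $N\ge\tfrac{9d-8}{4}+O(1)$ in the worst case. The short residue-class analysis is the cost, but since $q'=\tfrac34 d$ is actually attained, some refinement beyond $q'\le d-1$ is unavoidable.
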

\begin{proof}
(i) We  need to show that
$$(d+1)^2- (d+2)q' -2(r-r'-2q')\geq 0,$$
that is
$$(d+1)^2-(d-2)q'-2(r-r')\geq 0.$$
 Recall:
$$r= \left \lfloor {d+4\choose 4}-4d-1\over {d+1}\right
\rfloor, \ \ \ q={d+4\choose 4}-4d-1-r(d+1);$$
$$r'= \left \lfloor {{d+3\choose 4}-4d+3-q}\over {d}\right\rfloor, \ \ \ q'= {d+3\choose 4}-4d+3-q-r'd.$$

 Let us start by computing
$(d-2)q'+2(r-r')$:
\begin{eqnarray*}
(d-2)q'+2r-2r' &=& (d-2){d+3\choose 4}
 -(d-2)(4d-3)\\
 && -(d-2)q-(d^2-2d+2)r'+2r \\
&\leq & \frac{\mathbf{A}}{d(d+1)},
\end{eqnarray*}
 where
 \begin{eqnarray*}
 \mathbf{A}&=& (d^2+d)(d-2){d+3\choose
 4}-(d^2+d)(d-2)(4d-3) \\
 && -(d^2+d)(d-2)q
 -(d+1)(d^2-2d+2)\left\{{d+3\choose 4}-4d+3-q\right\}\\
 && +2d\left\{{d+4\choose 4}-4d-1-(d+1)\right\} \\
 &=& 2d{d+4\choose 4}-2(d+1){d+3\choose 4}-2(d^2+d+3)+2q(d+1)\\
 &=& 6{d+3\choose 4}-2(d^2+d+3)+ 2q(d+1).
\end{eqnarray*}
Therefore we get
\begin{eqnarray*}
(d-2)q'+2(r-r')&\leq &\frac{\mathbf{A}}{d(d+1)}\\
&= & \frac{(d+2)(d+3)}{4}-\frac{2(d^2+d+3)}{d^2+d}+\frac{2q}{d},
\end{eqnarray*}
by noting that $\frac{2(d^2+d+3)}{d^2+d}\geq 2$ and that $q\leq d$,
it immediately follows
$$(d-2)q'+2(r-r')\leq \frac{(d+2)(d+3)}{4}.$$
Now from here we have
\begin{eqnarray*}
(d+1)^2-(d-2)q'-2(r-r')&\geq &(d+1)^2- \frac{(d+2)(d+3)}{4}\\
&= & \frac{3d^2+3d-2}{4}\geq 0,
\end{eqnarray*}
and this finishes the proof.

\vspace{0.2cm}
(ii)  In order to check that $\hat{r}\leq r-r'-2q'$, it suffices to
check that
$$ { (d+1)^2- (d+2)q' -2(r-r'-2q')\over {d-1}}\leq r-r'-2q'+1,$$
that is
$$(d+1)^2-(d+2)q'-2(r-r'-2q')\leq (d-1)(r-r'-2q')+(d+1),$$
or, equivalently
$$(d+1)(r-r')-dq'-(d+1)^2+(d-1)\geq 0.$$
Again, using the definitions of   $r$ and $r'$, moreover   the
inequality $q'\leq d-1$,  one gets
\begin{eqnarray*}
&& (d+1)(r-r')-dq'-(d+1)^2+(d-1) \\
&\geq& (d+1)r-(d+1)r'-2(d^2+1)\\
&\geq & {d+4\choose 4}-(4d+1)-(d+1) \\
&& -\frac{d+1}{d}\left\{{d+3\choose 4}-4d+3-q\right\}-2(d^2+1),
\end{eqnarray*}
which, by a short computation similar to that in part (i), is equal
to
$$\frac{1}{d}\left\{3{d+3\choose 4}-(2d^3+d^2+3d+3)+q(d+1)\right\}.$$
Now we have
\begin{eqnarray*}
&& 3{d+3\choose 4}-(2d^3+d^2+3d+3)+q(d+1) \\
&\geq &3{d+3\choose 4}-(2d^3+d^2+3d+3)\\
&= &\frac{1}{8}(d^4-10d^3+3d^2-18d-24),
\end{eqnarray*}
which, in fact for $d\geq 10$ is positive, as required.   Then  it
remains to check that the cases $5\leq d\leq 9$ satisfy $\hat{r}\leq
r-r'-2q'$. Computing each of these cases, we get the conclusion:

$$\begin{tabular}{c|c|c}
  $d$&$\hat{r}$&$r-r'-2q'$\\
  \hline
  5&5&7\\
  \hline
  6&6&9\\
  \hline
  7&2&2\\
  \hline
  8&5&10\\
  \hline
  9&4&9
\end{tabular}$$

\vspace{0.2cm}
(iii) To prove  $\hat{q}\leq r'$, by noting that $\hat{q}\leq d-2$,
it is enough to prove
$$\frac{{d+3\choose 4}-4d+3-q}{d}\geq d-1,$$
i.e.
\begin{equation}\label{13}
{d+3\choose 4}-4d+3-q-d(d-1)\geq 0.
\end{equation}
Observe that
\begin{eqnarray*}
&& {d+3\choose 4}-4d+3-q-d(d-1)\\
&\geq & {d+3\choose 4}-4d+3-d-d(d-1)\\
&=& {d+3\choose 4}-(d^2+4d-3).
\end{eqnarray*}
For $d\geq 5$, it is immediate to see that
$${d+3\choose 4}-(d^2+4d-3)\geq 0,$$
which gives (\ref{13}).

\vspace{0.2cm}
(iv) We will show that $d-q'-\hat{r}\geq 0.$
 We have
\begin{eqnarray*}
d-q'-\hat{r}&\geq &
d-q'-\frac{1}{d-1}((d+1)^2-(d-2)q'-2r+2r')\\
&=& \frac{1}{d-1}(2r-2r'-q'-3d-1),
\end{eqnarray*}
moreover,
 \begin{eqnarray*}
 && 2r-2r'-q'-3d-1\\
 &= &2r+(d-2)r'-{d+3\choose 4}+d-4+q\\
 &\geq & \frac{\mathbf{A}}{d(d+1)},
\end{eqnarray*}
where,
\begin{eqnarray*}
 \mathbf{A}&=& 2d{d+4\choose 4}-2d(4d+1)-2d(d+1)+(d-2)(d+1){d+3\choose
 4}\\
 && -(d-2)(d+1)(4d-3)-(d-2)(d+1)q-d(d+1)(d-2)\\
&& -(d^2+d){d+3\choose 4}
  +(d^2+d)(d-4)+(d^2+d)q.\\
\end{eqnarray*}
 After simple computations,
 one can easily find that
\begin{eqnarray*}
\mathbf{A}&= & 2d{d+4\choose 4}-2(d+1){d+3\choose 4}\\
&& -(4d^3+5d^2+d+6)+2(d+1)q\\
&= & 6{d+3\choose 4}-(4d^3+5d^2+d+6)+2(d+1)q\\
&\geq & 6{d+3\choose 4}-(4d^3+5d^2+d+6),
\end{eqnarray*}
which is positive for $d\geq 11$, hence we are left with $5\leq
d\leq 10$. Now by  direct calculations we get  $q'+\hat{r}\leq d$ in
these cases as follows:

$$\begin{tabular}{c|c|c|c}
  $d$&$q'$&$\hat{r}$&$q'+\hat{r}$\\
  \hline
  5&0&5&5\\
  \hline
  6&0&6&6\\
  \hline
  7&5&2&7\\
  \hline
  8&2&5&7\\
  \hline
  9&4&4&8\\
  \hline
  10&5&4&9
\end{tabular}$$

\vspace{0.2cm}
(v) We have to verify that
 $$r'-\hat{q}= {d+1\choose
3}-4- (d-1)(r-r'-\hat{r}-q'),$$
 that is
 \begin{equation}\label{14}
 (d-1)\hat{r}+ \hat{q}+ (d-2)r'+(d-1)q'= (d-1)r-{d+1\choose
 3}+4.
 \end{equation}
Rewrite the left hand side  as
$$((d-1)\hat{r}+\hat{q}+(d-2)q'-2r')+ (dr'+q').$$
Recalling that
$$(d-1)\hat{r}+\hat{q}+(d-2)q'-2r'= (d+1)^2-2r;$$
$$dr'+q'= {d+3\choose 4}-4d+3-q,$$
the left hand side of (\ref{14}) becomes:
$$ (d+1)^2-2r+ {d+3\choose 4}-4d+3-q$$
$$=(d+1)^2-2r+ {d+3\choose 4}-4d+3-{d+4\choose 4}+4d+1+(d+1)r$$
$$=(d+1)^2-{d+3\choose 3}+4+(d-1)r$$
$$= -{d+1\choose
 3}+4+ (d-1)r,$$
and we are done.
\end{proof}

\subsubsection*{Acknowledgment}
 I would like to thank
Professor M. V. Catalisano, for
 sharing  with me  many geometrical
insight about  techniques involved in the postulation problem during
my stay at the university of Genova, for suggesting that I study the
problem considered here, and particularly for her willingness to
read patiently an early version of this paper.



\begin{thebibliography}{CGG5}

\bibitem[AB14]{AB}
T. Aladpoosh, E. Ballico, {\em Postulation of disjoint unions of
lines and a multiple point}, Rend. Sem. Mat. Univ. Politec. Torino.
 72 (3--4) (2014), 127--145.

\bibitem[AH95]{AH}
J. Alexander, A. Hirschowitz, {\em Polynomial interpolation in
several variables}, J.  Alg. Geom. 4 (2) (1995), 201--222.

\bibitem[Bal10]{Balm}
E. Ballico, {\em On the Hilbert functions of disjoint unions of a
linear space and many lines in $\PP^n$}, International Mathematical
Forum, 5 (16) (2010), 787--798.

\bibitem[Bal11]{Bal}
E. Ballico, {\em Postulation of disjoint unions of lines and a few
planes}, J. Pure Appl. Algebra, 215 (4) (2011), 597--608.

\bibitem[Bal15]{B1}
E. Ballico, {\em Postulation of disjoint unions of lines and a
multiple point II}, Mediterr. J. Mat. (2015), 1--15.

\bibitem[Bal16]{B3}
E. Ballico, {\em On the maximal rank of a general union of a
multiple linear space and a generic rational curve}, Bol. Soc. Mat.
Mex. 22 (1) (2016), 13--31.

\bibitem[CCG10]{CCG1}
E. Carlini, M. V. Catalisano, A. V. Geramita, {\em Bipolynomial
Hilbert functions},  J. Algebra. 324 (4) (2010), 758--781.

\bibitem[CCG11]{CCG3}
E. Carlini, M. V. Catalisano, A. V. Geramita, {\em 3-dimensional
sundials}, Cent. Eur. J. Math. 9 (5) (2011), 949--971.

\bibitem[CCG12]{CCG2}
E. Carlini, M. V. Catalisano, A. V. Geramita, {\em Subspace
arrangements, configurations of linear spaces and the quadrics
containing them}, J. Algebra. 362 (3) (2012), 70--83.

\bibitem[CCG16]{CCG4}
E. Carlini, M. V. Catalisano, A. V. Geramita, {\em On the Hilbert
function of lines union one non-reduced point}, Ann. Sc. Norm.
Super. Pisa Cl. Sci (5) XV (2016), 69--84.

\bibitem[Cil00]{cilib}
C. Ciliberto,  {\em Geometric aspects of polynomial interpolation in
more variables and of Waring's problem}, in: Proceedings of the
European Congress of Mathematics, Barcelona, (2000), in:  Progress
in Math, Brikha\"user (2001), 289--316.

\bibitem[CH14]{CH}
S. Cooper, B. Harbourne, {\em Regina lectures on fat points}, in:
Connections between Algebra, Combinatorics and Geometry, vol. 76,
Springer Proceedings in Mathematics and Statistics,  Springer, New
York, (2014), 147--187.

\bibitem[Der07]{D}
H. Derksen, {\em Hilbert series of subspace arrangements}, J. Pure
Appl. Algebra. 209 (1) (2007), 91--98.

\bibitem[DS02]{Sid}
H. Derksen, J. Sidman, {\em A sharp bound for the
Castelnuovo--Mumford regularity of subspace arrangements}, Adv.
Math. 172 (2) (2002), 151--157.

\bibitem[DHST14]{DHST}
M. Dumnicki, B. Harbourne, T. Szemberg, H. Tutaj-Gasin'ska, {\em
Linear subspaces, symbolic powers and Nagata type conjectures}, Adv.
Math. 252 (2014), 471--491.

\bibitem[FHL15]{FHL}
G. Fatabbi, B. Harbourne, A. Lorenzini, {\em Inductively computable
unions of fat linear subspaces}, J. Pure Appl. Algebra. 219 (2015),
5413--5425.

\bibitem[Ful84]{Ful}
W. Fulton, {\em Intersection theory}, Springer-Verlag, Berlin
Heidelberg New York, 1984.

\bibitem[GMR83]{GMR}
A. V. Geramita, P. Maroscia, L. G. Roberts, {\em The Hilbert
function of a reduced k-algebra}, J. Lond. Math. Soc. (2) 28 (3)
(1983), 443--452.

\bibitem[GO81]{GO}
A. V. Geramita, F. Orecchia, {\em On the Cohen--Macaulay type of $s$
lines in $\mathbb{A}^{n+1}$}, J. Algebra. 70 (1981), 116--140.

\bibitem[Har04]{HR}
B. Harbourne,  J. Ro{\'e}, {\em Linear systems with multiple base
points in {$\Bbb P^2$}},  Adv. Geom. 4(1) (2004), 41--59.

\bibitem[HH81]{HH}
R. Hartshorne, A. Hirschowitz, {\em Droites en position g\'en\'erale
dans l'espace projectif}, in: Algebraic Geometry, La R\'abida,
(1981); in: Lecture Notes in Math, vol. 961, Springer, Berlin,
(1982), 169--188.

\bibitem[Hir81]{Hir}
A. Hirschowitz, {\em Sur la postulation g\'en\'erique des courbes
rationnelles}, Acta Math. 146 (1981), 209--230.


\end{thebibliography}
\end{document}